\documentclass[a4paper,11pt]{amsart}
\usepackage[english]{babel}
\usepackage{mathrsfs}
\usepackage{amsfonts}
\usepackage[centertags]{amsmath}
\usepackage{amsthm}
\usepackage{enumitem}
\usepackage{latexsym,amssymb}
\usepackage{graphicx,color}
\setlength{\columnsep}{.25in}

\usepackage{mathtools}

\usepackage{comment}

\setcounter{secnumdepth}{3}

\setenumerate[1]{label=(\roman*), ref=(\roman*)}
\usepackage[all]{xy}% Para los diagramas.

\addtolength{\hoffset}{-2cm} \addtolength{\textwidth}{4cm}
 \addtolength{\textheight}{4cm}
  \addtolength{\topmargin}{-2cm}
% \addtolength{\oddsidemargin}{1cm}

\newtheorem{thm}{Theorem}[section]
\newtheorem{prop}[thm]{Proposition}
\newtheorem{definition}[thm]{Definition}
\newtheorem{lemma}[thm]{Lemma}

\newtheoremstyle{obs}% name
  {3pt}%      Space above
  {3pt}%      Space below
  {}%         Body font
  {}%         Indent amount (empty = no indent, \parindent = para indent)
  {\bfseries}% Thm head font
  {.}%        Punctuation after thm head
  {.5em}%     Space after thm head: " " = normal interword space;
        %       \newline = linebreak
  {}%         Thm head spec (can be left empty, meaning `normal')
\theoremstyle{obs}
\newtheorem{remark}[thm]{Remark}
\newtheorem{ex}[thm]{Example}

\newcommand{\R}{\mathbb{R}}      %Numeros reales
      %Numeros naturales
      %Numeros enteros
      %Numeros complejos
      %Numeros racionales

\def\qed{\ifvmode\removelastskip\fi
{\unskip\nobreak\hfil\penalty50\hbox{}\nobreak\hfil \hbox{\vrule
height1.2ex width1.2ex}\parfillskip=0pt \finalhyphendemerits=0
\par \smallskip}}
%\textwidth= 20,5cm \textheight= 24,5cm %\topmargin -0,7cm
%\hoffset=-2,3cm \voffset=-1cm

% The following packages can be found on http:\\www.ctan.org
%\usepackage{graphics} % for pdf, bitmapped graphics files
%\usepackage{epsfig} % for postscript graphics files
%\usepackage{mathptmx} % assumes new font selection scheme installed
%\usepackage{times} % assumes new font selection scheme installed
%\usepackage{amsmath} % assumes amsmath package installed
%\usepackage{amssymb}  % assumes amsmath package installed

%\parskip=5pt

\begin{document}

\title{The inverse problem of the calculus of variations for discrete systems}

\author[M. Barbero-Li\~n\'an, M. Farr\'{e} Puiggal\'{i}, S. Ferraro,  D. Mart\'{i}n de Diego]{Mar\'ia Barbero-Li\~n\'an$^1$, Marta Farr\'e Puiggal\'i$^2$, Sebasti\'{a}n Ferraro$^3$, \\  David Mart\'{\i}n de Diego$^2$}

\address{$^1$ 	Departamento de Matem\'atica Aplicada, ETS Arquitectura, Universidad Polit\'ecnica de Madrid, 
Avd. Juan de Herrera 4,  Madrid, 28040, Spain  
}
\email[Mar\'ia Barbero-Li\~n\'an]{m.barbero@upm.es} 
\address{
$^2$  Instituto de Ciencias Matem\'aticas (CSIC-UAM-UC3M-UCM),
   Calle Nicol\'as Cabrera 13-15, Campus UAM, Cantoblanco, 
   Madrid, 28049, Spain 	
}
\email[Marta Farr\'e Puiggal{\'\i}]{marta.farre@icmat.es} \email[David Mart{\'\i}n de Diego]{david.martin@icmat.es}
\address{$^3$ Universidad Nacional del Sur, Instituto de Matem\'{a}tica Bah\'{i}a Blanca and CONICET, Av. Alem 1253
B8000CPB Bah{\'\i}a Blanca, Argentina }
\email[Sebasti\'{a}n Ferraro]{sferraro@uns.edu.ar}

\maketitle

\begin{abstract}
We develop a geometric version of the inverse problem of the calculus of variations for discrete  mechanics and constrained discrete mechanics. The geometric approach consists of using suitable Lagrangian and isotropic submanifolds. We also provide a transition between the discrete and the continuous problems and propose variationality as an interesting geometric property to take into account in the design and computer simulation of numerical integrators.

\vspace{3mm}

\textbf{Keywords:} inverse problem, discrete variational calculus, discrete second order difference equations, nonholonomic mechanics.

\vspace{3mm}

\textbf{2010 Mathematics Subject Classification:} 37M15, 49N45, 53D12,	58E30, 65P10, 70F25.
\end{abstract}

\tableofcontents

\section{Introduction}

The classical inverse problem of the calculus of variations consists in determining whether or not a given system of explicit second order differential equations (SODE)
\begin{equation*} %\label{sode}
\ddot{q}^i=\Gamma^i(t, q^{j},\dot{q}^{j}), \quad i,j=1,\ldots,n
\end{equation*}
is equivalent to a system of Euler-Lagrange equations
\begin{equation*} %\label{EL}
\dfrac{d }{d t} \, \dfrac{\partial L}{\partial \dot{q}^i}-\dfrac{\partial L}{\partial q^i}=0 \, ,
\end{equation*}
for a regular Lagrangian $L(t,q,\dot{q})$ to be determined, in which case we say that the SODE is variational. The question can be rewritten in the following form: Is it possible to find a regular matrix   $g_{ij}(t,q,\dot{q})$ (the so-called multipliers) such that the system
\begin{equation} \label{ip}
g_{ij} (\ddot{q}^j-\Gamma^j(t, q,\dot{q}))=\dfrac{d }{d t} \, \dfrac{\partial L}{\partial \dot{q}^i}-\dfrac{\partial L}{\partial q^i}
\end{equation} 
admits a regular solution $L$? This problem has a long history, which dates back to the end of the 19th century. For a historical  review see \cite{PrinceKrupkova}. The first case to be solved was the case $n=1$, which is always variational \cite{Sonin1886}. The next case, $n=2$, was solved by Douglas in \cite{Douglas}, using techniques which were difficult to extend to higher dimensional cases. There are many approaches to the problem, see for instance the characterization in terms of the existence of a Poincar\'{e}-Cartan two-form \cite{81Crampin, 84CPT}. In \cite{BFM} we gave a new characterization of the inverse problem in terms of Lagrangian submanifolds, which will be reviewed later in Section \ref{sec2}. 

In this paper we explore the inverse problem for discrete systems given by second order difference equations (SOdE). We are particularly interested in the case when such systems are numerical integrators for a continuous system, and therefore it will be common to find them written in implicit form. 
In the discrete case an implicit system of second order difference equations will be given by a submanifold $M\subset Q\times Q\times Q$. Assume that the submanifold $M$ can be described as the vanishing of functions $\Phi^i(q_{k-1},q_k,q_{k+1})$, $i=1,\ldots,n$, such that the matrix $\left(\frac{\partial \Phi}{\partial q_{k+1}}\right)$ is regular.
Then a natural discrete formulation of the classical inverse problem would be to ask whether or not it is possible to find a regular discrete Lagrangian $L_d:Q\times Q\longrightarrow \mathbb{R}$ such that both systems
\begin{equation*}
\Phi(q_{k-1},q_k,q_{k+1})=0 \quad \mbox{and} \quad D_1L_d(q_k,q_{k+1})+D_2L_d(q_{k-1},q_k)=0
\end{equation*}
admit the same solutions.
A different version of the problem, which is concerned with the equality
\[
\Phi(q_{k-1},q_k,q_{k+1})=D_1L_d(q_k,q_{k+1})+D_2L_d(q_{k-1},q_k)
\]
has been addressed in \cite{BC2013,CracOp1996,HM2004}.

We regard this paper as a first step to introduce variationality as an important geometric property to detect in the study of the qualitative and quantitative behaviour of different numerical methods, mainly for constrained systems. If a system of second order differential equations could admit a variational description then  automatically it inherits some geometric properties, for instance preservation of energy and symplecticity. Of course, a good geometric integrator should take into account these preservation properties.

The paper is organized as follows. 
In Section \ref{sec2} we recall two different characterizations of the variationality  of a continuous SODE, one in terms of the existence of a Legendre transformation and one in terms of the existence of a Poincar\'{e}-Cartan two-form. 
In Section \ref{appendix} we provide an implicit version of one of these characterizations of variational SODEs, namely the one in terms of the existence of a Legendre transformation, given in \cite{BFM}. This section is included because such results are not available in the literature to our best knowledge.
In Section \ref{sec3} we introduce discrete mechanics and provide definitions of discrete variational SOdEs both in the explicit and implicit cases (in Sections \ref{explicit} and \ref{implicit} respectively). We also prove discrete analogues of the characterizations of variationality already known in the continuous case, and recalled in Section \ref{sec2}. 
In Section \ref{sec4} we show the transition between a discrete variational SOdE and a continuous variational SODE in both directions. 
In Section \ref{sec5} we show how the existence of two alternative Lagrangian formulations for a discrete SOdE can lead to constants of motion, in a similar way to the continuous case.
In Section \ref{sec6} we introduce the notion of discrete variational SOdE with constraints, replacing Lagrangian submanifolds by isotropic ones, and we prove an analogue of the characterization given in terms of the Poincar\'{e}-Cartan two-form. 
Since we have a notion of constrained variational second order system, both in the continuous and discrete cases, 
we expect that keeping the variational property from a continuous variational SODE to its discretization will be advantageous. 
For example, the rolling disk provides a constrained variational SODE. By an appropriate choice of discretization of the Lagrangian function and the constraints  we can obtain a discrete variational SOdE from the DLA algorithm in~\cite{CM2001}. This implies that we can perform an extension of the isotropic submanifold to a Lagrangian one as in~\cite{BFM}, and obtain energy functions that are approximately preserved by the discrete flow using backward error analysis.

\section{The inverse problem of the calculus of variations} \label{sec2}

A SODE $\Gamma$ on a tangent bundle $TQ$ is a  vector field $\Gamma \in {\mathfrak X}(TQ)$ such that $T\tau_Q ( \Gamma(v_q))=v_q$ for all $v_q\in T_qQ$, where $\tau_Q:TQ\rightarrow Q$ is the canonical projection. In local coordinates $(q,\dot{q})$ on $TQ$, the integral curves of $\Gamma$,
\[
c: I \to TQ, \; \; \; t \in I \to c(t) = (q^{i}(t), {\dot{q}}^i(t)) \, , 
\]
satisfy the system of differential equations 
\[
\displaystyle \frac{dq^{i}}{dt} = \dot{q}^i, \quad \frac{d\dot{q}^{i}}{dt} = \Gamma^{i}(q, \dot{q}) \, ,
\]
which is equivalent to the explicit system of second order differential equations 
\begin{equation}\label{qwe}
\frac{d^2q^{i}(t)}{dt^2}\,  = \Gamma^{i}\left(q(t), \frac{dq(t)}{dt} \right) \, .
\end{equation}
Locally we have $\Gamma=\displaystyle \dot{q}^i\frac{\partial}{\partial q^i}+\Gamma^{i}(q, \dot{q})\frac{\partial}{\partial \dot{q}^{i}}$.

An example of SODE arises from a system of Euler-Lagrange equations, which can be defined from a Lagrangian function $L:TQ\longrightarrow \mathbb{R}$ using Hamilton's variational principle, see more details in~\cite{foundation}. Consider the action functional
\[
\begin{array}{rrcl}
{\mathcal J}:&{\mathcal C}^2(q_0, q_1, [a, b])&\longrightarrow&\R\\
&c&\longmapsto & \displaystyle \int_a^b L(c(t),\dot{c}(t))\; dt \, ,
\end{array}
\] 
where 
\[
	{\mathcal C}^2(q_0, q_1, [a, b])=\left\{ c: [a, b]\subseteq {\mathbb R}\longrightarrow Q\; \Big|\; c\in C^2([a, b]),\; c(a)=q_0,\; c(b)=q_1\right\}.
\]

Hamilton's variational principle states that the critical points of ${\mathcal J}$ are the trajectories of the Lagrangian system, which coincide with the solutions of the Euler-Lagrange equations
\begin{equation} \label{ELeqns}
\frac{d}{dt}\left(\frac{\partial L}{\partial \dot{q}^i}\right)-\frac{\partial L}{\partial q^i}=0, \quad 1\leq i\leq n=\dim Q \, .
\end{equation}

If the Lagrangian is regular, that is, $\left(\frac{\partial^2 L}{\partial \dot{q}^i \partial\dot{q}^j } \right)$ is regular, then (\ref{ELeqns}) can be written as a SODE on $TQ$. In this case we can also use the Legendre transformation to obtain the corresponding Hamiltonian formulation of the problem.

\begin{definition}
	Let L be a Lagrangian function on $TQ$. The fiber derivative
	\[
	\begin{array}{cccc}
	\operatorname{Leg}_L : & TQ & \longrightarrow & T^*Q \\
	& v_q & \longmapsto & \operatorname{Leg}_L (v_q)  \, ,
	\end{array}
	\]
	defined by 
	\[\left\langle \operatorname{Leg}_L(v_q), w_q \right\rangle = \left.\frac{d}{dt}\right|_{t=0} L(v_q+tw_q)
	\] 
	is known as the Legendre transformation of $L$. Locally $\operatorname{Leg}_L(q^i,\dot{q}^j)= (q^i,p_j=\frac{\partial L}{\partial \dot{q}^j})$. 
\end{definition} 

If the Legendre transformation is a local diffeomorphism then the Lagrangian is regular.
A SODE $\Gamma$ is variational if the second order differential equations (\ref{qwe}) are  equivalent to the  Euler-Lagrange equations (\ref{ELeqns}) for some regular Lagrangian $L: TQ\rightarrow \R$.

Let $\omega_{Q}$ be the canonical symplectic form on $T^*Q$, $d_T\omega_Q=-\flat_{\omega_Q}^* (\omega_{T^*Q})$ and $\flat_{\omega_Q}:\mathfrak{X}(T^*Q) \longrightarrow \Omega^1(T^*Q)$ denote the contraction map $\flat_{\omega_Q}(X)=i_X\omega_Q$. The inverse map of $\flat_{\omega_Q}$, if exists, it is denoted by $\sharp_{\omega_Q}$. 

In \cite[Theorem 4.2]{BFM} we provided the following result, which gives a characterization of the inverse problem in terms of the existence of a Legendre transformation.

\begin{thm} \label{cont-lagrangian}
A SODE $\Gamma$ on $TQ$ is variational if and only if there exists a  local diffeomorphism  $F:TQ\longrightarrow T^{*}Q$ of fibre bundles over $Q$ such that 
$\operatorname{Im}(TF\circ\Gamma)$ is a Lagrangian  submanifold of the symplectic manifold  $(TT^{*}Q, d_T\omega_{Q})$.
\[
\xymatrix{
	TTQ \ar[rr]^{TF} && TT^{*}Q \ar[dd]^{\tau_{T^*Q}}  \\
	&&\\
	TQ \ar[uu]^{\Gamma} \ar[rr]^{F} \ar[uurr]^{TF\circ\Gamma} &&T^{*}Q  \\
}
\]
\end{thm}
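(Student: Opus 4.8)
The plan is to reduce the Lagrangian condition on $\operatorname{Im}(TF\circ\Gamma)\subset TT^*Q$ to the closedness of an explicit $1$-form on $TQ$, by transporting the symplectic manifold $(TT^*Q,d_T\omega_Q)$ to the canonical cotangent bundle $(T^*TQ,\omega_{TQ})$. The key ingredient is the Tulczyjew diffeomorphism $\alpha_Q:TT^*Q\to T^*TQ$, which in the bundle coordinates $(q^i,p_i,\dot q^i,\dot p_i)$ on $TT^*Q$ and $(q^i,\dot q^i,a_i,b_i)$ on $T^*TQ$ reads $\alpha_Q(q^i,p_i,\dot q^i,\dot p_i)=(q^i,\dot q^i,\dot p_i,p_i)$ and is a symplectomorphism between $(TT^*Q,d_T\omega_Q)$ and $(T^*TQ,\omega_{TQ})$ (this is compatible with the definition $d_T\omega_Q=-\flat_{\omega_Q}^*\omega_{T^*Q}$ given above, up to conventions). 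I would then use the elementary fact that the image of a $1$-form $\sigma\in\Omega^1(TQ)$, seen as a section of $T^*TQ\to TQ$, is Lagrangian in $(T^*TQ,\omega_{TQ})$ if and only if $\sigma$ is closed, and is the graph of an exact form $\sigma=dL$ exactly when (locally, by Poincar\'e's lemma) such a potential $L:TQ\to\R$ exists.

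Next I would compute the image in coordinates. Writing $F(q,\dot q)=(q,F_i(q,\dot q))$ for the bundle map and $\Gamma=\dot q^i\,\der/\der q^i+\Gamma^i\,\der/\der\dot q^i$ for the SODE, a direct computation of $TF$ along $\Gamma$ gives
\[
(TF\circ\Gamma)(q,\dot q)=\Big(q,\;F_i,\;\dot q^i,\;\tfrac{\der F_i}{\der q^j}\dot q^j+\tfrac{\der F_i}{\der \dot q^j}\Gamma^j\Big).
\]
Applying $\alpha_Q$ I obtain a point of $T^*TQ$ whose base coordinates are exactly $(q^i,\dot q^i)$ --- here the SODE condition is essential, since it forces the velocity slot to be $\dot q^i$ and hence makes the image a genuine section over $TQ$ --- and whose fibre coordinates are $a_i=\der_{q^j}F_i\,\dot q^j+\der_{\dot q^j}F_i\,\Gamma^j$ and $b_i=F_i$. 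Thus $\alpha_Q(\operatorname{Im}(TF\circ\Gamma))=\operatorname{Im}(\sigma)$ for
\[
\sigma=\Big(\tfrac{\der F_i}{\der q^j}\dot q^j+\tfrac{\der F_i}{\der \dot q^j}\Gamma^j\Big)\,dq^i+F_i\,d\dot q^i,
\]
and consequently $\operatorname{Im}(TF\circ\Gamma)$ is Lagrangian in $(TT^*Q,d_T\omega_Q)$ if and only if $\sigma$ is closed.

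It then remains to read off the meaning of $\sigma=dL$. The $d\dot q^i$ components give $\der L/\der\dot q^i=F_i$, i.e. $F=\operatorname{Leg}_L$; hence $F$ is a local diffeomorphism of bundles over $Q$ precisely when the Hessian $(\der^2 L/\der\dot q^i\der\dot q^j)$ is regular, that is, when $L$ is regular. Substituting this into the $dq^i$ components turns them into $\der L/\der q^i=\dot q^j\,\der^2 L/\der q^j\der\dot q^i+\Gamma^j\,\der^2 L/\der\dot q^j\der\dot q^i=\tfrac{d}{dt}(\der L/\der\dot q^i)$, which is exactly the Euler-Lagrange equation (\ref{ELeqns}) for the SODE $\Gamma$. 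This yields both implications: for ($\Leftarrow$), closedness of $\sigma$ gives locally a regular $L$ with $\operatorname{Leg}_L=F$ whose Euler-Lagrange SODE is $\Gamma$, so $\Gamma$ is variational; for ($\Rightarrow$), if $\Gamma$ is variational with regular Lagrangian $L$ one sets $F=\operatorname{Leg}_L$ (a local diffeomorphism over $Q$ by regularity), and the two identities above hold by the definition of the Legendre transformation and the Euler-Lagrange equations, so $\sigma=dL$ is closed and $\operatorname{Im}(TF\circ\Gamma)$ is Lagrangian.

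The main obstacle I anticipate is twofold. First, establishing (or correctly invoking) the symplectomorphism property of $\alpha_Q$ with the sign conventions implicit in $d_T\omega_Q=-\flat_{\omega_Q}^*\omega_{T^*Q}$, and verifying that $\operatorname{Im}(TF\circ\Gamma)$ is a submanifold transverse to the fibres of $T^*TQ\to TQ$ so that it really is the graph of a $1$-form; this is where regularity of $F$ and the SODE property of $\Gamma$ must be used carefully. Second, the passage from closed to exact is only local (Poincar\'e's lemma), so the Lagrangian $L$ is a priori defined only on open sets of $TQ$; a global $L$ would require the vanishing of the class $[\sigma]\in H^1(TQ)\cong H^1(Q)$, and I would phrase the variationality in the statement accordingly, matching the hypothesis that $F$ is merely a local diffeomorphism.
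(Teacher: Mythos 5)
Your proof is correct, and it is essentially the standard argument: the paper itself defers the proof of Theorem \ref{cont-lagrangian} to \cite[Theorem 4.2]{BFM}, but your route through the Tulczyjew symplectomorphism $TT^*Q\cong T^*TQ$, reducing the Lagrangian condition to closedness of the one-form $\sigma$ and then to $\sigma=dL$ with $F=\operatorname{Leg}_L$, is exactly the technique the paper uses for its own analogues (the computation of $(TF)^*d_T\omega_Q$ in Section \ref{appendix} and the Poincar\'e-lemma argument in Proposition \ref{exp:var}). Your closing caveats are also apt, though note that transversality to the fibres of $T^*TQ\to TQ$ is automatic here (the base coordinates of $\alpha_Q\circ TF\circ\Gamma$ are $(q,\dot q)$, so the image is the graph of a section regardless of the regularity of $F$, which is needed only to conclude that $L$ is regular).
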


Let $V(TQ)$ denote the set of all vertical vector fields for $\tau_{Q}\colon TQ \to Q$, that is, $V(TQ)=\operatorname{Ker}(T\tau_Q)$.
An alternative characterization of variational SODE was given in \cite{81Crampin} in terms of the existence of a Poincar\'{e}-Cartan two-form, see also \cite{marmo1980}. The precise result is the following. 
\begin{thm} \label{crampin}
A SODE $\Gamma$ on $TQ$ is variational if and only if there exists a two-form $\Omega$ on
$TQ$ of maximal rank such that
\begin{enumerate}
\item $d\Omega=0$,
\item $\Omega(v_{1},v_{2})=0$ for all $v_{1}, v_{2}\in V(TQ)$,
\item $\mathcal{L}_{\Gamma}\Omega=0$.
\end{enumerate}
\end{thm}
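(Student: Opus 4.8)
The plan is to prove the two implications separately: the ``only if'' direction is a construction followed by a routine verification, while the ``if'' direction is the substantive part and is best carried out locally.

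\emph{From variationality to $\Omega$.} Suppose $\Gamma$ is variational, so there is a regular Lagrangian $L$ whose Euler--Lagrange equations \eqref{ELeqns} are equivalent to \eqref{qwe}. I would take $\Omega=\omega_L:=-d\theta_L$, where $\theta_L=\frac{\partial L}{\partial \dot q^i}\,dq^i$ is the Poincar\'e--Cartan one-form (equivalently $\omega_L=\operatorname{Leg}_L^{*}\omega_Q$). Then condition (i) is immediate since $\Omega$ is exact. Condition (ii) follows from the local expression $\omega_L=\frac{\partial^2 L}{\partial\dot q^i\partial\dot q^j}\,dq^i\wedge d\dot q^j+\frac{\partial^2 L}{\partial q^j\partial\dot q^i}\,dq^i\wedge dq^j$, each term of which carries at least one $dq$ factor and hence annihilates pairs of vertical vectors; maximal rank is exactly the regularity of $L$, i.e. the nondegeneracy of the Hessian $\frac{\partial^2 L}{\partial\dot q^i\partial\dot q^j}$. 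Finally, condition (iii) follows from the defining relation $i_\Gamma\omega_L=dE_L$ of the Euler--Lagrange vector field together with Cartan's formula, $\mathcal{L}_\Gamma\omega_L=d\,i_\Gamma\omega_L+i_\Gamma\,d\omega_L=d(dE_L)+0=0$.

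\emph{From $\Omega$ to variationality.} Here I would argue locally in coordinates. Condition (ii) forces the purely vertical block to vanish, so $\Omega=A_{ij}\,dq^i\wedge dq^j+B_{ij}\,dq^i\wedge d\dot q^j$, and maximal rank is equivalent to $\det(B_{ij})\neq 0$. Expanding condition (i) and collecting the $dq\wedge d\dot q\wedge d\dot q$ component yields the relation $\frac{\partial B_{ij}}{\partial\dot q^k}=\frac{\partial B_{ik}}{\partial\dot q^j}$. The key step is then to bring in condition (iii): since $d\Omega=0$, Cartan's formula gives $\mathcal{L}_\Gamma\Omega=d(i_\Gamma\Omega)$, so (iii) says $i_\Gamma\Omega$ is closed. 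Computing $i_\Gamma\Omega$ from the SODE form $\Gamma=\dot q^i\frac{\partial}{\partial q^i}+\Gamma^i\frac{\partial}{\partial\dot q^i}$ and extracting the $d\dot q\wedge d\dot q$ part of $d(i_\Gamma\Omega)=0$, the terms involving $\frac{\partial B_{ij}}{\partial\dot q^k}$ cancel by the previous relation, leaving the antisymmetric part $B_{ij}-B_{ji}=0$. Thus $B$ is symmetric and $\frac{\partial B_{ij}}{\partial\dot q^k}$ is totally symmetric, so locally $B_{ij}=\frac{\partial^2 L}{\partial\dot q^i\partial\dot q^j}$ for some function $L$, regular because $\det(B_{ij})\neq 0$. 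This is exactly where the SODE hypothesis on $\Gamma$ is used.

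It then remains to check that this $L$ reproduces $\Gamma$. With the Hessian block fixed, the horizontal block $A_{ij}$ is determined by condition (i) up to a closed two-form supported in the $dq\wedge dq$ directions, which I would absorb by modifying $L$ with a suitable gauge term, so that $\Omega=\omega_L$; condition (iii) pins down the remaining freedom. Closedness of $i_\Gamma\Omega$ then gives locally $i_\Gamma\omega_L=dE$ for some function $E$, and comparing with the explicit form of $\omega_L$ while again invoking the SODE property of $\Gamma$ shows that $E=E_L$ and that the integral curves of $\Gamma$ satisfy the Euler--Lagrange equations of $L$, so $\Gamma$ is variational. The main obstacle I anticipate is precisely this final reconstruction: cleanly extracting the symmetry of $B$ from condition (iii) and then verifying that the recovered Lagrangian's Euler--Lagrange vector field is $\Gamma$ itself, rather than merely some vector field preserving the same two-form. (One could instead try to route the argument through Theorem \ref{cont-lagrangian} by relating $\omega_L$ to $\operatorname{Leg}_L^{*}\omega_Q$, but transporting the dynamics to $(TT^{*}Q,d_T\omega_Q)$ seems less direct than the computation above.)
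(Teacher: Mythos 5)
A preliminary remark: the paper does not prove Theorem \ref{crampin}; it is quoted from \cite{81Crampin} (see also \cite{marmo1980}), so there is no in-paper argument to compare yours against, and I assess your proposal on its own terms. Your necessity direction is complete and correct. In the sufficiency direction your skeleton is the classical coordinate proof, and its key steps are sound: condition (ii) kills the $d\dot q\wedge d\dot q$ block, so $\Omega=A_{ij}\,dq^i\wedge dq^j+B_{ij}\,dq^i\wedge d\dot q^j$ with maximal rank equivalent to $\det(B_{ij})\neq 0$; the $dq\wedge d\dot q\wedge d\dot q$ part of $d\Omega=0$ gives $\partial B_{ij}/\partial\dot q^k=\partial B_{ik}/\partial\dot q^j$; the $d\dot q\wedge d\dot q$ part of $d(i_\Gamma\Omega)=0$ then yields $B_{ij}=B_{ji}$, whence $B_{ij}=\partial^2L/\partial\dot q^i\partial\dot q^j$ locally. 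The gauge adjustment of the $dq\wedge dq$ block also works as you indicate: $\Omega-\omega_L$ is a closed two-form whose coefficients are independent of $\dot q$ (from the $dq\wedge dq\wedge d\dot q$ part of its closedness), hence locally equals $d(\beta_i(q)\,dq^i)$ and is absorbed by $L\mapsto L\pm\beta_i\dot q^i$ without changing the Hessian.

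The one assertion that is not merely unexecuted but false as stated is ``$E=E_L$'' at the end. After arranging $\Omega=\omega_L$, condition (iii) only gives $i_\Gamma\omega_L=dE$ for \emph{some} $E$, and $\omega_L$ does not determine the dynamics: on $Q=\R$ with $L=\frac12\dot q^2$, the SODE $\Gamma=\dot q\,\partial/\partial q+\partial/\partial\dot q$ satisfies (i)--(iii) with $\Omega=\omega_L=dq\wedge d\dot q$, yet it is the Euler--Lagrange field of $L+q$, not of $L$. The repair is short and should be included: $\Gamma-\Gamma_L$ is vertical (both are SODEs), so $i_{\Gamma-\Gamma_L}\omega_L=d(E-E_L)$ is semi-basic, forcing $E-E_L=F(q)$; then $L'':=L-F$ has the same Cartan two-form and energy $E_{L''}=E_L+F=E$, so $i_{\Gamma_{L''}}\omega_L=dE=i_\Gamma\omega_L$, and the maximal rank of $\Omega$ gives $\Gamma=\Gamma_{L''}$, i.e.\ $\Gamma$ is variational with Lagrangian $L-F$. (Crampin's own argument shortcuts the block-by-block reconstruction: (i)--(ii) give $\Omega=d\theta$ locally with $\theta$ semi-basic, (iii) gives $i_\Gamma\Omega=dE$, and $L:=\theta(\Gamma)+E$ satisfies $\partial L/\partial\dot q^i=\theta_i$ by evaluating $\mathcal{L}_\Gamma\theta=dL$ on vertical vectors; this produces the Lagrangian, the identity $\Omega=\pm\omega_L$, and the Euler--Lagrange property of $\Gamma$ in one stroke, with no separate treatment of the $A_{ij}$ block.)
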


In Section \ref{explicit} we will provide an analogue of this result for the discrete case.

\subsection{The continuous implicit case} \label{appendix}
Now we will generalize the approach from Theorem  \ref{cont-lagrangian} to the case in which the second order system is given in implicit form because, to our best knowledge, it cannot be found in the literature. In the implicit case, the SODE on $TQ$ is replaced by a submanifold of $TTQ$.

Let $T^{(2)}Q$ denote the second order tangent bundle of $Q$ and it can also be interpreted as a submanifold of $TTQ$ as follows
\[
T^{(2)}Q=\left\{ v\in TTQ : T\tau_Q(v)=\tau_{TQ}(v) \right\} \, .
\]
Consider now an implicit system of second order differential equations given by a submanifold $M\subset T^{(2)}Q$.  Assume $M$ is defined by the vanishing of functions 
\begin{equation} \label{eq:cont:implicit}
\Phi^i (q,\dot{q},\ddot{q})=0, \quad i=1,\ldots,n \, ,
\end{equation}
such that $C:=\left( \frac{\partial \Phi}{\partial \ddot{q}} \right)$ is regular.
We will now derive Helmholtz conditions for the problem of finding a regular Lagrangian $L$ such that the systems
\[
\Phi^i(q,\dot{q},\ddot{q})=0 \quad \mbox{ and } \quad \frac{d}{dt}\left( \frac{\partial L}{\partial \dot{q}^i}\right) - \frac{\partial L}{\partial q^i}=0 \, , \quad  i=1,\ldots,n,
\]
have the same solutions (in which case we call the system variational). 
Emulating the explicit case, we aim for a local diffeomorphism over the identity $F:TQ \rightarrow T^*Q$ such that $TF(M)\subset TT^*Q$ is a Lagrangian submanifold of ($TT^*Q,d_T\omega_Q$). If $(q,p,\dot{q},\dot{p})$ denote fibered coordinates on $TT^*Q$ then locally  $d_T\omega_Q=dq\wedge d\dot{p}+d\dot{q}\wedge dp$. 
The submanifold $TF(M)$ is locally given by
\[
\left(
q^i, F_i(q,\dot{q}),\dot{q}^i,\frac{\partial F_i}{\partial q^j}\dot{q}^j+\frac{\partial F_i}{\partial \dot{q}^j}\ddot{q}^j
\right)
\]
plus the condition $\Phi^i (q,\dot{q},\ddot{q})=0$ for all $i=1,\ldots,n$. 
If we write $\omega_{TF}=(TF)^*d_T\omega_Q$ then locally 
\begin{align*}
	\omega_{TF}&=dq^i\wedge d\left( \frac{\partial F_i}{\partial q^j}\dot{q}^j+\frac{\partial F_i}{\partial \dot{q}^j}\ddot{q}^j \right)+d\dot{q}^i\wedge dF_i \\
	&= \left( \frac{\partial^2 F_i}{\partial q^k \partial q^j}\dot{q}^j + \frac{\partial^2 F_i}{\partial q^k \partial \dot{q}^j}\ddot{q}^j  \right) dq^i \wedge dq^k \\
	&\mathrel{\phantom{=}} + \left( \frac{\partial^2 F_i }{\partial \dot{q}^k \partial q^j}\dot{q}^j + \frac{\partial F_i}{\partial q^k} +\frac{\partial^2 F_i}{\partial \dot{q}^k \partial \dot{q}^j}\ddot{q}^j - \frac{\partial F_k}{\partial q^i} \right) dq^i \wedge d\dot{q}^k \\
	&\mathrel{\phantom{=}} + \frac{\partial F_i}{\partial \dot{q}^k} dq^i \wedge d\ddot{q}^k +\frac{\partial F_i}{\partial \dot{q}^k} d\dot{q}^i \wedge d\dot{q}^k \, .
\end{align*}
The condition that $TF(M)$ be a Lagrangian submanifold of $TT^*Q$ is equivalent to the condition $(TF \circ i_M)^*d_T\omega_Q=0$ and can be written as $\omega_{TF}(X,Y)=0$ for all $X,Y\in \mathfrak{X}(M)$. Therefore we compute a local basis for $\mathfrak{X}(M)$, by imposing that $X\in \mathfrak{X}(T^{(2)}Q)$ satisfies $d\Phi(X)=0$, and we get
\[
A_i=\frac{\partial}{\partial q^i}-\frac{\partial \Phi^j}{\partial q^i}(C^{-1})^k_j\frac{\partial}{\partial \ddot{q}^k} \, , \quad
B_i=\frac{\partial}{\partial \dot{q}^i}-\frac{\partial \Phi^j}{\partial \dot{q}^i}(C^{-1})^k_j\frac{\partial}{\partial \ddot{q}^k} \, ,
\]
where $C^{-1}$ is the inverse matrix of $\left( \frac{\partial \Phi}{\partial \ddot{q}} \right)$. 
Finally the implicit Helmholtz conditions 
\[
\omega_{TF}(B_i,B_j)=0, \quad   \omega_{TF}(A_i,B_j)=0 \quad \mbox{ and } \quad \omega_{TF}(A_i,A_j)=0
\]
are respectively given by
\begin{align}
\frac{\partial F_i}{\partial \dot{q}^j}&=\frac{\partial F_j}{\partial \dot{q}^i} \, , \label{IHC1}\\
\frac{\partial^2 F_i}{\partial \dot{q}^j\partial q^k}\dot{q}^k+\frac{\partial F_i}{\partial q^j}+\frac{\partial^2 F_i}{\partial \dot{q}^j \partial \dot{q}^k}\ddot{q}^k-\frac{\partial F_j}{\partial q^i}&=\frac{\partial F_i}{\partial \dot{q}^k}\frac{\partial \Phi^r}{\partial \dot{q}^j}(C^{-1})^k_r \, , \label{IHC2} \\
\frac{\partial^2 F_i}{\partial q^j\partial q^k}\dot{q}^k+\frac{\partial^2 F_i}{\partial q^j\partial \dot{q}^k}\ddot{q}^k-\frac{\partial F_i}{\partial \dot{q}^k}\frac{\partial \Phi^r}{\partial q^j}(C^{-1})^k_r&=\frac{\partial^2 F_j}{\partial q^i\partial q^k}\dot{q}^k+\frac{\partial^2 F_j}{\partial q^i\partial \dot{q}^k}\ddot{q}^k-\frac{\partial F_j}{\partial \dot{q}^k}\frac{\partial \Phi^r}{\partial q^i}(C^{-1})^k_r \, . \label{IHC3}
\end{align}

Using the implicit function theorem to write $\ddot{q}^i=\Gamma^i(q,\dot{q})$ in appropriate neighborhoods, we have that the Helmholtz conditions  (\ref{IHC1})-(\ref{IHC3})  are equivalent to (\ref{eq:cont:implicit}) being variational.

\begin{remark}
	Notice that for the system $\Phi^j=\ddot{q}^j-\Gamma^j(q,\dot{q})$, $j=1,\ldots,n$, the matrix $C$ is the identity matrix and conditions (11)-(13) in \cite{BFM} are recovered. Those conditions were proved to be equivalent to the classical Helmholtz conditions~\cite{Douglas}.
\end{remark}

Next we will see a very simple example that clearly shows the difference between the version of the inverse problem that we are discussing now, namely the multiplier version in~\eqref{ip}, and the first version of the question raised by Helmholtz \cite{Helmholtz1887}. His question was whether or not it is possible to find a regular Lagrangian $L$ such that
\begin{equation} \label{IP-exact}
\Phi_i(q,\dot{q},\ddot{q})=\frac{d}{dt}\left( \frac{\partial L}{\partial \dot{q}^i}\right) -\frac{\partial L}{\partial q^i} \, , \quad i=1,\ldots,n\, ,
\end{equation}
where now $\Phi_i=\delta_{ij}\Phi^j$ are regarded as the components of a covector. 
He provided a set of necessary and sufficient conditions for (\ref{IP-exact}) to hold, namely,
\begin{align}
\frac{\partial \Phi_i}{\partial \ddot{q}^j}-\frac{\partial \Phi_j}{\partial \ddot{q}^i} &=0 \, ,  \label{cHC1}  \\
\frac{\partial \Phi_i}{\partial q^j}-\frac{\partial \Phi_j}{\partial q^i} -\frac{1}{2}\frac{d}{dt}\left( \frac{\partial \Phi_i}{\partial \dot{q}^j}-\frac{\partial \Phi_j}{\partial \dot{q}^i} \right) &=0 \, , \label{cHC2}  \\
\frac{\partial \Phi_i}{\partial \dot{q}^j}+\frac{\partial \Phi_j}{\partial \dot{q}^i}-\frac{d}{dt}\left( \frac{\partial \Phi_i}{\partial \ddot{q}^j}+\frac{\partial \Phi_j}{\partial \ddot{q}^i} \right) &=0 \, , \label{cHC3}
\end{align}
which are also known as Helmholtz conditions.

\begin{ex}
	Consider the system
	\begin{equation*}
	\Phi_1=e^{\ddot{x}-x}-1=0 \, , \quad \Phi_2=\ddot{y}-y=0 \, ,
	\end{equation*}
	which is clearly implicit variational in the sense that its solutions coincide with the solutions to the Euler-Lagrange equations for the regular Lagrangian $L=\frac{1}{2}\left(\dot{x}^2+\dot{y}^2+x^2+y^2 \right)$.

	Notice that, as it should be, the implicit Helmholtz conditions (\ref{IHC1})-(\ref{IHC3}) admit solutions, for instance $F_1=\frac{\partial L}{\partial \dot{x}}=\dot{x}$, $F_2=\frac{\partial L}{\partial \dot{y}}=\dot{y}$.
	
	On the other hand, the original Helmholtz conditions, which can be directly checked on $\Phi$, are not satisfied. Indeed, replacing $\Phi$ in (\ref{cHC3}), we note that 
	\begin{equation*}
	2\frac{\partial \Phi_2}{\partial \dot{x}}-2\frac{d}{dt}\frac{\partial \Phi_1}{\partial \ddot{x}} = -2\frac{d}{dt}e^{\ddot{x}-x} =  -2e^{\ddot{x}-x}(\dot{x}-\dddot{x}) 
	\end{equation*}
	does not identically vanish.

\end{ex}

\section{Discrete inverse problem} \label{sec3}

In this section we will extend the above results for variational SODEs in the continuous setting to  the discrete one.
We will consider separately the explicit case, in which the second order difference equation is given as a map 
\[
\begin{array}{cccc}
\Gamma:&Q\times Q&\longrightarrow &Q\times Q\times Q\times Q \\
       &(q_{k-1},q_k)&\longmapsto &(q_{k-1},q_k,q_k,\tilde{\Gamma}(q_{k-1},q_k)) \, ,
\end{array}			
\]
and the implicit case, in which the second order difference equation
is given as a submanifold $M$ of $Q\times Q\times Q$, satisfying some regularity condition.

First we will give a brief introduction to discrete mechanics.

\subsection{Introduction to discrete mechanics}

We will consider $Q\times Q$ as a discrete version of $TQ$ and therefore $Q\times Q\times Q \times Q$ as a discrete analogue  of $TTQ$, see \cite{marsden-west}.  Instead of curves on $Q$, the solutions are replaced by sequences of points on $Q$. If we fix some  $N\in \mathbb{N}$ then we use the notation 
\[
\mathcal{C}_d(Q)=\left\{ q_d:\left\{ k \right\}_{k=0}^N \longrightarrow Q \right\}
\]
for the set of possible solutions, which can be identified  with the manifold $Q\times \stackrel{(N+1)}{\cdots} \times Q$.
Define a functional, the discrete action map, on the space of sequences $\mathcal{C}_d(Q)$ by
\[
S_d (q_d)=\sum_{k=0}^{N-1}L_d(q_k,q_{k+1}), \quad q_d\in \mathcal{C}_d(Q) \, .
\]
If we consider variations of $q_d$ with fixed end points $q_0$ and $q_N$ and  extremize $S_d$ over $q_1,\ldots,q_{N-1}$, we obtain the discrete Euler-Lagrange equations (DEL equations for short) 
\begin{equation}
\label{Eq:DEL}
D_1 L_d (q_k,q_{k+1})+D_2L_d(q_{k-1},q_k)=0 \quad \mbox{for all} \quad k=1,\ldots,N \, ,
\end{equation}
where $D_1L_d(q_{k-1},q_k)\in T^*_{q_{k-1}}Q$ and $D_2L_d(q_{k-1},q_k)\in T^*_{q_k}Q$ correspond to 
$dL_d(q_{k-1},q_k)$ under the identification $T^*_{(q_{k-1},q_k)}(Q\times Q)\cong T^*_{q_{k-1}}Q\times T^*_{q_k}Q$.

If $L_d$ is regular, that is, $D_{12}L_d$ is regular, then we obtain a well defined discrete Lagrangian map
\[
\begin{array}{cccc}
F_{L_d}: & Q\times Q & \longrightarrow & Q \times Q \\
& (q_{k-1},q_k) & \longmapsto & (q_k,q_{k+1}(q_{k-1},q_k)) \, ,
\end{array}
\]
where $q_{k+1}$ is the unique solution of~\eqref{Eq:DEL} for the given pair $(q_{k-1},q_k)$. We can further assure that the discrete Lagrangian map is invertible so that it is possible to write $q_{k-1}=q_{k-1}(q_k,q_{k+1})$, see \cite[Theorem 1.5.1]{marsden-west}.

In this setting we can define two discrete Legendre transformations 
\[
\mathbb{F}^{+}L_{d},\mathbb{F}^{-}L_{d}:Q\times Q\longrightarrow T^*Q \, ,
\]
since each projection is equally eligible for the base point. They can be defined as
\begin{align*}
\mathbb{F}^{+}L_{d}(q_{k-1},q_{k})&= (q_{k},D_2L_d(q_{k-1},q_{k})) \, ,\\
\mathbb{F}^{-}L_{d}(q_{k-1},q_{k})&= (q_{k-1},-D_1L_d(q_{k-1},q_{k})) \, .
\end{align*}
It holds that $(\mathbb{F}^{+}L_{d})^*\omega_Q=(\mathbb{F}^{-}L_{d})^*\omega_Q=:\Omega_{L_d}$, with local expression
\[
\Omega_{L_d}(q_{k-1},q_k)=\frac{\partial^2 L_d}{\partial q_{k-1}^i \partial q_k^j}dq_{k-1}^i \wedge dq_k^j \, .
\]

We can also define the evolution of the discrete system on the Hamiltonian side, $\tilde{F}_{L_d}:T^*Q \longrightarrow T^*Q$, by any of the formulas
\[
\tilde{F}_{L_d}=\mathbb{F}^{+}L_{d}\circ (\mathbb{F}^{-}L_{d})^{-1}=\mathbb{F}^{+}L_{d}\circ F_{L_d} \circ (\mathbb{F}^{+}L_{d})^{-1}=\mathbb{F}^{-}L_{d}\circ F_{L_d} \circ (\mathbb{F}^{-}L_{d})^{-1} \, ,
\]
because of the commutativity of the following diagram:
\[
\xymatrix{ 
(q_{k-1},q_k) \ar[rr]^{F_{L_d}} \ar[dr]_{\mathbb{F}^{+}L_{d}} & & (q_{k},q_{k+1}) \ar[ld]_{\mathbb{F}^{-}L_{d}} \ar[rd]_{\mathbb{F}^{+}L_{d}} \ar[rr]^{F_{L_d}} & &  (q_{k+1},q_{k+2}) \ar[ld]_{\mathbb{F}^{-}L_{d}} \\
& (q_k,p_k) \ar[rr]_{\tilde{F}_{L_d}} & & (q_{k+1},p_{k+1}) &
}
\]
The discrete Hamiltonian map $\tilde{F}_{L_d}:(T^*Q,\omega_Q) \longrightarrow (T^*Q,\omega_Q)$ is symplectic. Therefore the submanifold 
\begin{align*}
\left( q_k, p_k, q_{k+1}, p_{k+1} \right) &= \left( q_k, \mathbb{F}^{-}L_{d}(q_k,q_{k+1}), q_{k+1}, \mathbb{F}^{-}L_{d}(q_{k+1},q_{k+2}) \right) \\
&= \left( q_k, \mathbb{F}^{+}L_{d}(q_{k-1},q_{k}), q_{k+1}, \mathbb{F}^{+}L_{d}(q_{k},q_{k+1}) \right)
\end{align*}
is Lagrangian in $(T^*Q\times T^*Q,\Omega_Q)$, where $\Omega_{Q}:=\beta_{T^*Q}^{*}\omega_{Q}-\alpha_{T^*Q}^{*}\omega_{Q}$ is a symplectic form and $\alpha_{T^*Q},\beta_{T^*Q}:T^*Q\times T^*Q \longrightarrow T^*Q$ denote the projections onto the  first and second factor respectively.

So far we have taken as the starting point a discrete Lagrangian $L_d : Q\times Q \longrightarrow \mathbb{R}$. However, if we start with a continuous Lagrangian and take an appropriate discrete Lagrangian then the DEL equations become a geometric integrator for the continuous  Euler-Lagrange system, known as a variational integrator. 
Hence, given a regular Lagrangian function $L:TQ \longrightarrow \mathbb{R}$, we define a discrete Lagrangian $L_d: Q\times Q \times \mathbb{R} \longrightarrow \mathbb{R}$ as an approximation to the action of the continuous Lagrangian. More precisely,  for a regular Lagrangian $L$, and appropriate $h,q_0,q_1$, we can define the exact discrete Lagrangian as
\[
L_d^E(q_0,q_1,h) = \int_0^h L(q_{0,1}(t),\dot{q}_{0,1}(t))dt \, ,
\]
where $q_{0,1}(t)$ is the unique solution of the Euler-Lagrange equations for $L$ satisfying $q_{0,1}(0)=q_0$ and $q_{0,1}(h)=q_1$, see \cite{hartman,MMM3}. 
Then for a sufficiently small $h$, the solutions of the DEL equations for $L_d^E$ lie on the solutions of the Euler-Lagrange equations for $L$, see \cite[Theorem 1.6.4]{marsden-west}.

In practice, $L_d^E(q_0,q_1,h)$ will not be explicitly given. Therefore we will take 
\[
L_d(q_0,q_1,h) \approx L_d^E(q_0,q_1,h)\, ,
\]
using some quadrature rule. We obtain symplectic integrators in this way, see \cite{PatrickCuell}.

\subsection{Discrete inverse problem for explicit second order difference equations}  \label{explicit}

We will consider $Q\times Q$ as a discrete version of $TQ$ and therefore $Q\times Q\times Q \times Q$ as a discrete analogue of $TTQ$ \cite{marsden-west}. The discrete second order submanifold is given by
\[
\ddot{Q}_d = \left\{ \gamma_d \in (Q\times Q) \times (Q \times Q) : \alpha_Q\circ \beta_{Q\times Q} (\gamma_d) = \beta_Q\circ \alpha_{Q\times Q} (\gamma_d) \right\} \cong Q\times Q\times Q \, ,
\]
where $\alpha_Q,\beta_Q:Q\times Q\longrightarrow Q$ are the projections onto the first and second factor respectively, and analogously for $\alpha_{Q\times Q},\beta_{Q\times Q}:(Q\times Q) \times (Q \times Q) \longrightarrow Q\times Q$.

A map $\Gamma: Q\times Q \longrightarrow \ddot{Q}_d \subset (Q\times Q)\times (Q \times Q)$ satisfying $\alpha_{Q\times Q}\circ\Gamma=\operatorname{Id}$ will be referred to as an explicit second order difference equation (SOdE for short). 

\begin{definition} 
The explicit second order difference equation $q_{k+1}=\tilde{\Gamma}(q_{k-1},q_{k})$  is variational if and only if there is a 
regular discrete Lagrangian $L_d:Q\times Q\longrightarrow \mathbb{R}$ such that both systems
\begin{equation*}
q_{k+1}=\tilde{\Gamma}(q_{k-1},q_{k}) \quad \mbox{and} \quad D_1L_d(q_k,q_{k+1})+D_2L_d(q_{k-1},q_k)=0
\end{equation*}
admit the same solutions.
\end{definition}

To avoid technical difficulties we are assuming that $(q_{k-1},q_k)$ and $(q_k,\tilde{\Gamma}(q_{k-1},q_k))$ belong to the same neighborhood where $L_d$ is defined. 

Consider first $\alpha_Q:Q\times Q\longrightarrow Q$ as playing the role of $\tau_Q:TQ\longrightarrow Q$ in the discrete case (later in Proposition \ref{prop-pr2} we will see that we could also have chosen $\beta_Q:Q\times Q\longrightarrow Q$ as a discretization). For a given explicit second order difference equation $q_{k+1}=\tilde{\Gamma}(q_{k-1},q_{k})$  and a local diffeomorphism $F:Q\times Q\longrightarrow T^{*}Q$ over the identity, we define $\gamma_{F,\Gamma}:=(F\times F)\circ \Gamma$ as shown in the following commutative diagram:

\[
\xymatrix{
Q\times Q\times Q\times Q \ar[rr]^/5pt/{F\times F} & & T^{*}Q \times T^{*}Q \ar[d]^{\alpha_{T^*Q}}\\
Q\times Q \ar[u]^{\Gamma} \ar[rr]^{F} \ar[urr]^{\gamma_{F,\Gamma}} \ar[dr]_{\alpha_Q} & & T^{*}Q \ar[dl]^{\pi_Q} \\
& Q &
}
\]
For $(q_{k-1},q_k)\in Q\times Q$ the diagram is the following:
\[
\xymatrix{
(q_{k-1},q_k,q_k,\tilde{\Gamma}(q_{k-1},q_{k})) \ar[rr]^/-14pt/{F\times F} & & (q_{k-1},F(q_{k-1},q_k),q_k,F(q_k,\tilde{\Gamma}(q_{k-1},q_k))) \ar[d]^{\alpha_{T^*Q}} \\
(q_{k-1},q_k) \ar[u]^{\Gamma} \ar[rr]^{F} \ar[urr]^{\gamma_{F,\Gamma}} & & (q_{k-1},F(q_{k-1},q_k))
}
\]
Observe that the image of $F$ is written as $(q_{k-1},F(q_{k-1},q_k))$ to stress that the base point of the covector is $q_{k-1}$.

Let $\omega_{Q}=-d\theta_{Q}$ denote the canonical symplectic form on $T^{*}Q$ and consider on $T^{*}Q\times T^{*}Q$ the symplectic form $\Omega_{Q}:=\beta_{T^*Q}^{*}\omega_{Q}-\alpha_{T^*Q}^{*}\omega_{Q}$.  

\begin{prop} \label{exp:var}
The second order difference equation $q_{k+1}=\tilde{\Gamma}(q_{k-1},q_{k})$ is variational if and only if there is a local diffeomorphism $F:Q\times Q \longrightarrow T^{*}Q$ satisfying $\alpha_Q=\pi_Q\circ F$ and such that $\operatorname{Im}(\gamma_{F,\Gamma})$ is a Lagrangian submanifold of $(T^{*}Q\times T^{*}Q,\Omega_{Q})$.
\end{prop}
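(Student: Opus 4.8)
The plan is to identify the local diffeomorphism $F$ with one of the discrete Legendre transformations, and to recognise $\operatorname{Im}(\gamma_{F,\Gamma})$ as the image of the pair $(\mathbb{F}^-L_d,\mathbb{F}^+L_d)$. Throughout I would write $F^-(q_{k-1},q_k):=F(q_{k-1},q_k)\in T^*_{q_{k-1}}Q$ for the covector part of $F$ (the condition $\alpha_Q=\pi_Q\circ F$ says precisely that this covector sits over $q_{k-1}$) and $F^+(q_{k-1},q_k):=F(q_k,\tilde{\Gamma}(q_{k-1},q_k))\in T^*_{q_k}Q$, so that $\gamma_{F,\Gamma}=(F^-,F^+)$ as a map into $T^*Q\times T^*Q$, with $\alpha_{T^*Q}\circ\gamma_{F,\Gamma}=F^-$ and $\beta_{T^*Q}\circ\gamma_{F,\Gamma}=F^+$. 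The central computation, to be carried out first, is the pullback of the primitive $\Theta_Q:=\beta_{T^*Q}^*\theta_Q-\alpha_{T^*Q}^*\theta_Q$ of $-\Omega_Q$: using $\theta_Q=p_i\,dq^i$ one obtains $\gamma_{F,\Gamma}^*\Theta_Q=\langle F^+,dq_k\rangle-\langle F^-,dq_{k-1}\rangle$, and since $\Omega_Q=-d\Theta_Q$ the isotropy condition $\gamma_{F,\Gamma}^*\Omega_Q=0$ is exactly the statement that this one-form on $Q\times Q$ is closed.

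For the direct implication, suppose the SOdE is variational with regular discrete Lagrangian $L_d$. I would set $F:=\mathbb{F}^-L_d$, so that $F^-=-D_1L_d$; regularity of $L_d$ makes $F$ a local diffeomorphism, and by construction $\pi_Q\circ F=\alpha_Q$. Because the DEL equations of $L_d$ reproduce $q_{k+1}=\tilde{\Gamma}(q_{k-1},q_k)$, the identity $D_1L_d(q_k,\tilde{\Gamma}(q_{k-1},q_k))+D_2L_d(q_{k-1},q_k)=0$ holds for all $(q_{k-1},q_k)$, whence $F^+=-D_1L_d(q_k,\tilde{\Gamma}(q_{k-1},q_k))=D_2L_d(q_{k-1},q_k)$. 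Thus $\gamma_{F,\Gamma}=(\mathbb{F}^-L_d,\mathbb{F}^+L_d)$ and $\gamma_{F,\Gamma}^*\Omega_Q=(\mathbb{F}^+L_d)^*\omega_Q-(\mathbb{F}^-L_d)^*\omega_Q=\Omega_{L_d}-\Omega_{L_d}=0$ by the equality recalled in the introduction to discrete mechanics; since $\gamma_{F,\Gamma}$ is an immersion, its $2n$-dimensional isotropic image is Lagrangian.

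For the converse, assume $F$ is a local diffeomorphism with $\pi_Q\circ F=\alpha_Q$ and $\operatorname{Im}(\gamma_{F,\Gamma})$ Lagrangian. By the central computation $\gamma_{F,\Gamma}^*\Theta_Q$ is closed, so by the Poincar\'e lemma it is locally exact, say $\gamma_{F,\Gamma}^*\Theta_Q=dL_d$ for some $L_d:Q\times Q\to\mathbb{R}$ defined on a neighbourhood. Reading off components gives $D_1L_d=-F^-$ and $D_2L_d=F^+$, i.e. $\mathbb{F}^-L_d=F$ and $D_2L_d(q_{k-1},q_k)=F(q_k,\tilde{\Gamma}(q_{k-1},q_k))$. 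Substituting into the DEL equations, $D_1L_d(q_k,q_{k+1})+D_2L_d(q_{k-1},q_k)=-F(q_k,q_{k+1})+F(q_k,\tilde{\Gamma}(q_{k-1},q_k))$, which vanishes if and only if $F(q_k,q_{k+1})=F(q_k,\tilde{\Gamma}(q_{k-1},q_k))$; since $\pi_Q\circ F=\alpha_Q$, the restriction $F(q_k,\cdot)$ is a local diffeomorphism onto a fibre of $T^*Q$, so this forces $q_{k+1}=\tilde{\Gamma}(q_{k-1},q_k)$, showing the two systems have the same solutions. Finally $L_d$ is regular because $\mathbb{F}^-L_d=F$ is a local diffeomorphism, which is equivalent to $D_{12}L_d$ being invertible.

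The steps that need the most care are those in the converse: the passage from isotropy to the existence of $L_d$ is only local (hence both the hypotheses and the conclusion are local statements), and one must justify that the $L_d$ produced by integrating a closed one-form is genuinely regular, as well as that ``same solutions'' follows from the fibrewise injectivity of $F$ rather than merely from the DEL equations holding on a subset. Checking that $\gamma_{F,\Gamma}$ is an immersion of maximal dimension, so that isotropic forces Lagrangian, is routine once one notes that $\Gamma$ is an embedding and $F\times F$ a local diffeomorphism.
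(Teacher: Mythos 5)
Your proposal is correct and follows essentially the same route as the paper's proof: both directions rest on the Poincar\'e lemma applied to the pulled-back primitive of $\Omega_Q$, the identifications $F=\mathbb{F}^{-}L_d$ and $F^{+}=\mathbb{F}^{+}L_d$, and the recovery of the SOdE from the DEL equations via the fibrewise injectivity of $F$. The only differences are cosmetic (order of the two implications, and your slightly more explicit justification via $\Omega_{L_d}-\Omega_{L_d}=0$ of why the image is Lagrangian, where the paper says ``clearly'').
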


\begin{proof}

Assume there is an $F$ as in the statement.
Then $\operatorname{Im}(\gamma_{F,\Gamma})$ is a submanifold of half the dimension of $T^*Q\times T^*Q$ and the isotropy condition $\gamma_{F,\Gamma}^{*}\Omega_{Q}=0$ is satisfied, since $\operatorname{Im}(\gamma_{F,\Gamma})$ is a Lagrangian submanifold.
Since 
\[
\gamma_{F,\Gamma}^{*}\Omega_{Q}=-d((\beta_{T^*Q}\circ\gamma_{F,\Gamma})^{*}\theta_{Q}-(\alpha_{T^*Q}\circ\gamma_{F,\Gamma})^{*}\theta_{Q})
\]
is an exact two-form on $Q\times Q$, by the Poincar\'{e} lemma the condition $\gamma_{F,\Gamma}^{*}\Omega_{Q}=0$ implies 
\[(\beta_{T^*Q}\circ\gamma_{F,\Gamma})^{*}\theta_{Q}-(\alpha_{T^*Q}\circ\gamma_{F,\Gamma})^{*}\theta_{Q}=dL_{d}\]
for a locally defined map $L_{d}:Q\times Q \longrightarrow \mathbb{R}$, called the discrete Lagrangian.

In local coordinates we get
\[
-F_{i}(q_{k-1},q_k)dq^{i}_{k-1}+F_{i}(q_{k},\tilde{\Gamma}(q_{k-1},q_k))dq^{i}_k=\frac{\partial L_d}{\partial q^i_{k-1}}(q_{k-1},q_k)dq^i_{k-1}+\frac{\partial L_d}{\partial q^i_k}(q_{k-1},q_k)dq^i_k \ ,
\]
that is, $D_1L_d(q_{k-1},q_k)=-F(q_{k-1},q_k)$ and $D_2L_d(q_{k-1},q_k)=F(q_{k},\tilde{\Gamma}(q_{k-1},q_k))$. In particular $F=\mathbb{F}^{-}L_{d}$ and the admissibility condition $F(q_k,q_{k+1})=F(q_k,\tilde{\Gamma}(q_{k-1},q_k))$ gives the discrete Euler-Lagrange equations $-D_{1}L_{d}(q_{k},q_{k+1})=D_{2}L_{d}(q_{k-1},q_{k})$, see \cite[Section 3.2]{discrete-implicit}.

Assume now that $q_{k+1}=\tilde{\Gamma}(q_{k-1},q_{k})$ is variational. Then we define $F(q_{k-1},q_k)=-D_1L_d(q_{k-1},q_{k})$ to get
\begin{align*}
\left\{D_1L_d(q_k,q_{k+1})+D_2L_d(q_{k-1},q_k)=0\right\} &\equiv\left\{ q_{k+1}= \tilde{\Gamma}(q_{k-1},q_k)  \right\} \\
&\equiv\left\{ -D_1L_d(q_k,q_{k+1}) = -D_1L_d(q_k,\tilde{\Gamma}(q_{k-1},q_k))  \right\} \, ,
\end{align*}
which implies 
\[
-D_2L_d(q_{k-1},q_k)=D_1L_d(q_k,\tilde{\Gamma}(q_{k-1},q_k)) \, .
\]
Here we have used the notation $\left\{X(q_{k-1},q_k,q_{k+1})=0\right\}\equiv\left\{Y(q_{k-1},q_k,q_{k+1})=0\right\}$ to denote that the solutions of the equations $X(q_{k-1},q_k,q_{k+1})=0$ and $Y(q_{k-1},q_k,q_{k+1})=0$ coincide.
Then $\operatorname{Im}(\gamma_{F,\Gamma})$ is given by
\[
\left( q_{k-1}, -D_1 L_d (q_{k-1},q_k), q_k, -D_1 L_d (q_k, \tilde{\Gamma}(q_{k-1},q_k)) \right) =  \left( q_{k-1}, -D_1 L_d (q_{k-1},q_k), q_k, D_2 L_d (q_{k-1}, q_k) \right) 
\]
which is clearly a Lagrangian submanifold of $(T^*Q\times T^*Q,\Omega_Q)$.
\end{proof}

\begin{remark} \label{remark-symplecto}
Notice that we can equivalently work with Lagrangian submanifolds of $T^*(Q\times Q)$. First consider the symplectomorphism
\[
\begin{array}{cccc}
\Psi: & (T^*(Q\times Q),\omega_{Q\times Q}) & \longrightarrow & (T^*Q\times T^*Q,\Omega_{Q}) \\
& (\alpha_{q_0},\alpha_{q_1}) & \longmapsto & (-\alpha_{q_0},\alpha_{q_1})
\end{array}
\]
and define the one-form $\Psi^{-1}\circ\gamma_{F,\Gamma}$ on $Q\times Q$. Then the variationality is equivalent to requiring that $\Psi^{-1}\circ\gamma_{F,\Gamma}$ be closed, that is, that $\operatorname{Im}(\Psi^{-1}\circ\gamma_{F,\Gamma})$ be a Lagrangian submanifold of $(T^*(Q\times Q),\omega_{Q\times Q})$.
\end{remark}

If we impose that $\operatorname{Im}(\gamma_{F,\Gamma})$ is a Lagrangian submanifold of $(T^{*}Q\times T^{*}Q,\Omega_{Q})$ for a given SOdE $\Gamma$ then we get the following conditions on $F$:
\begin{align}
\frac{\partial F_{i}}{\partial Q^{j}_{1}}(q_{k-1},q_k)&=\frac{\partial F_{j}}{\partial Q^{i}_{1}}(q_{k-1},q_k) \label{dHC1} \, , \\
\frac{\partial F_{i}}{\partial Q^{j}_{2}}(q_{k-1},q_k)+\frac{\partial F_{j}}{\partial Q^{l}_{2}}(q_k,\Gamma_{k-1,k})\frac{\partial \Gamma^{l}}{\partial q^{i}_{k-1}}&=0 \label{dHC2} \, , \\
\frac{\partial F_{i}}{\partial Q^{j}_{1}}(q_k,\Gamma_{k-1,k})+\frac{\partial F_{i}}{\partial Q^{l}_{2}}(q_k,\Gamma_{k-1,k})\frac{\partial \Gamma^{l}}{\partial q^{j}_{k}}&=\frac{\partial F_{j}}{\partial Q^{i}_{1}}(q_k,\Gamma_{k-1,k})+\frac{\partial F_{j}}{\partial Q^{l}_{2}}(q_k,\Gamma_{k-1,k})\frac{\partial \Gamma^{l}}{\partial q^{i}_{k}} \label{dHC3} \, ,
\end{align}
where $\Gamma_{k-1,k}$ is short notation for $\Gamma(q_{k-1},q_k)$ and $\partial/\partial Q_1$, $\partial/\partial Q_2$ denote partial derivatives with respect to the first and second slot respectively. When the evaluation point is $(q_{k-1},q_k)$ we will usually omit it and use $\partial/\partial q_{k-1}$, $\partial/\partial q_k$ instead of $\partial/\partial Q_1$, $\partial/\partial Q_2$, for instance $\frac{\partial \Gamma^{l}}{\partial q^{i}_{k-1}}=\frac{\partial\Gamma^{l}}{\partial Q_1^{i}}(q_{k-1},q_k)$. We will refer to these equations as \textbf{discrete Helmholtz conditions}. 

Since we are assuming $\Gamma:U\longrightarrow U\times U$, for some open subset $U\subset Q\times Q$, using (\ref{dHC1}) the last condition can be reduced to
\begin{equation}
\frac{\partial F_{i}}{\partial Q^{l}_{2}}(q_k,\Gamma_{k-1,k})\frac{\partial\Gamma^{l}}{\partial q^{j}_{k}}=\frac{\partial F_{j}}{\partial Q^{l}_{2}}(q_k,\Gamma_{k-1,k})\frac{\partial\Gamma^{l}}{\partial q^{i}_{k}} \label{dHC32} \, .
\end{equation}

Equivalently, following the Remark \ref{remark-symplecto}, the Helmholtz conditions can be written as the closedness condition $d(\Psi^{-1}\circ\gamma_{F,\Gamma})=0$.

\begin{ex}[Toy example]
Consider the second order difference equation 
\begin{equation}
x_{k+1}=2x_{k}-x_{k-1}, \quad y_{k+1}=2y_{k}-y_{k-1}\, , \label{toy}
\end{equation}
which is a discretization of the variational SODE $\ddot{x}=0$, $\ddot{y}=0$. In this case we already know a Lagrangian function for the continuous system, for instance $L=\frac{1}{2}(\dot{x}^{2}+\dot{y}^{2})$, so we define a discrete Lagrangian by 
\begin{equation} \label{toy-lagrangian}
L_{d}(q_{k-1},q_{k}):=\frac{h}{2}\left( \left( \frac{x_{k}-x_{k-1}}{h} \right)^{2}+\left( \frac{y_{k}-y_{k-1}}{h} \right)^{2} \right)\, ,
\end{equation}
where $q_{k}=(x_k,y_k)$. Then we can take $F$ to be $\mathbb{F}^{-}L_{d}$, that is
\[
F(x_{k-1},y_{k-1},x_k,y_k) = \left(x_{k-1},y_{k-1},\frac{x_{k}-x_{k-1}}{h},\frac{y_{k}-y_{k-1}}{h}\right)
\]
and $\operatorname{Im}(\gamma_{\Gamma,F})$, given by
\[
\left( x_{k-1}, y_{k-1}, \frac{x_{k}-x_{k-1}}{h},\frac{y_{k}-y_{k-1}}{h}, x_k, y_k, \frac{x_{k}-x_{k-1}}{h},\frac{y_{k}-y_{k-1}}{h} \right),
\]
is a Lagrangian submanifold of $(T^{*}Q\times T^{*}Q,\Omega_{Q})$. Therefore (\ref{toy}) is variational, according to Proposition \ref{exp:var}. Indeed the Lagrangian (\ref{toy-lagrangian}) has (\ref{toy}) as DEL equations.
\end{ex}

\begin{remark}
There is no preferred role between $q_{k-1}$ and $q_k$, therefore Proposition  \ref{exp:var} could be rewritten in terms of the existence of a local diffeomorphism $F^{+}:Q\times Q \longrightarrow T^{*}Q$ satisfying $\beta_Q=\pi_Q\circ F^{+}$, that is, $F^{+}(q_{k-1},q_{k}) = (q_{k},F^{+}(q_{k-1},q_{k}))$. Then we would get $F^{+}=\mathbb{F}^{+}L_{d}$. More precisely we have the following equivalence result.
\end{remark}

Let $\Phi_{\Gamma}:Q\times Q \longrightarrow Q\times Q$ denote the flow of $\Gamma$, that is, $\Phi_{\Gamma}(q_{k-1},q_k)=(q_k,\tilde{\Gamma}(q_{k-1},q_k))$, and let $\pi_Q:T^*Q\longrightarrow Q$ denote the canonical projection. 

\begin{prop} \label{prop-pr2}
There is a local diffeomorphism $F:(Q\times Q,\alpha_Q) \longrightarrow (T^{*}Q,\pi_Q)$ over the identity such that $\operatorname{Im}(\gamma_{F,\Gamma})$ is a Lagrangian submanifold of $(T^{*}Q\times T^{*}Q,\Omega_{Q})$ if and only if there is a local diffeomorphism $F^+:(Q\times Q,\beta_Q) \longrightarrow (T^{*}Q,\pi_Q)$ over the identity such that $\operatorname{Im}(\gamma_{F^+,\Gamma})$ is a Lagrangian submanifold of $(T^{*}Q\times T^{*}Q,\Omega_{Q})$.
\end{prop}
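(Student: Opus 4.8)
The plan is to route the argument through the discrete Lagrangian supplied by Proposition \ref{exp:var} and to recognize that the two candidate submanifolds are in fact the \emph{same} object, namely the graph of the discrete Hamiltonian map $\tilde{F}_{L_d}$. The key structural fact I would exploit is that a single regular $L_d$ produces both discrete Legendre transforms, with $\pi_Q\circ\mathbb{F}^{-}L_d=\alpha_Q$ and $\pi_Q\circ\mathbb{F}^{+}L_d=\beta_Q$, so that $\mathbb{F}^{-}L_d$ is a legitimate choice of $F$ and $\mathbb{F}^{+}L_d$ a legitimate choice of $F^{+}$.

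For the forward implication, I would assume $F$ exists as in the statement. By Proposition \ref{exp:var} the SOdE is variational and, as extracted in its proof, $F=\mathbb{F}^{-}L_d$ for a discrete Lagrangian $L_d$ whose DEL equations reproduce $q_{k+1}=\tilde{\Gamma}(q_{k-1},q_k)$; since $F$ is a local diffeomorphism, so is $\mathbb{F}^{-}L_d$, hence $D_{12}L_d$ is regular and $\mathbb{F}^{+}L_d$ is a local diffeomorphism as well. I would then set $F^{+}:=\mathbb{F}^{+}L_d$, which is over $\beta_Q$ as required, and compute $\operatorname{Im}(\gamma_{F^{+},\Gamma})$ directly: evaluating $(\mathbb{F}^{+}L_d\times\mathbb{F}^{+}L_d)\circ\Gamma$ at $(q_{k-1},q_k)$ and writing $q_{k+1}=\tilde{\Gamma}(q_{k-1},q_k)$ yields the point
\[
\left( q_k,\,D_2L_d(q_{k-1},q_k),\,q_{k+1},\,D_2L_d(q_k,q_{k+1}) \right).
\]
Using $\tilde{F}_{L_d}=\mathbb{F}^{+}L_d\circ F_{L_d}\circ(\mathbb{F}^{+}L_d)^{-1}$ together with $F_{L_d}=\Phi_{\Gamma}$ in the variational case, this is exactly the image of $(q_k,D_2L_d(q_{k-1},q_k))$ under $\tilde{F}_{L_d}$, so $\operatorname{Im}(\gamma_{F^{+},\Gamma})$ is the graph of $\tilde{F}_{L_d}$. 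As $\tilde{F}_{L_d}$ is symplectic, its graph is a Lagrangian submanifold of $(T^{*}Q\times T^{*}Q,\Omega_Q)$, which is the conclusion. One may note in passing that the same computation shows $\operatorname{Im}(\gamma_{F,\Gamma})$ is again the graph of $\tilde{F}_{L_d}$, so the two submanifolds literally coincide.

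The converse is entirely symmetric: swapping the two factors of $Q\times Q$, equivalently reversing the roles of $q_{k-1}$ and $q_{k+1}$, interchanges $\alpha_Q$ with $\beta_Q$ and $\mathbb{F}^{-}L_d$ with $\mathbb{F}^{+}L_d$, so the argument of Proposition \ref{exp:var} applies in the mirrored setting to yield a regular $L_d$ with $F^{+}=\mathbb{F}^{+}L_d$; taking $F:=\mathbb{F}^{-}L_d$ then satisfies the first condition by the computation above. The main point requiring care — and the only genuine obstacle — is precisely this mirrored version of Proposition \ref{exp:var}: one must rerun its Poincar\'{e}-lemma step starting from $\gamma_{F^{+},\Gamma}^{*}\Omega_Q=0$ and check that the resulting potential $L_d$ is regular and that the admissibility condition for $F^{+}=\mathbb{F}^{+}L_d$ indeed delivers the DEL equations, with the correct signs distinguishing $D_1L_d$ from $D_2L_d$. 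Once this symmetry is made explicit, both conditions are equivalent to variationality of the SOdE, and hence to each other.
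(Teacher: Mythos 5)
Your forward implication is correct, and it takes a genuinely different route from the paper's. You pass through Proposition \ref{exp:var} to extract $L_d$ with $F=\mathbb{F}^{-}L_d$, set $F^{+}:=\mathbb{F}^{+}L_d$, and identify both $\operatorname{Im}(\gamma_{F,\Gamma})$ and $\operatorname{Im}(\gamma_{F^{+},\Gamma})$ with the graph of the symplectic map $\tilde{F}_{L_d}$. The paper works directly with $F$: it sets $F^{+}:=F\circ\Phi_\Gamma$, extracts from the Helmholtz condition (\ref{dHC2}) that $\partial\tilde{\Gamma}/\partial q_{k-1}$ is invertible (so $\Phi_\Gamma$, hence $F^{+}$, is a local diffeomorphism), and notes that $\gamma_{F^{+},\Gamma}=\gamma_{F,\Gamma}\circ\Phi_\Gamma$, so the two images coincide locally. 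The two constructions agree, since $F\circ\Phi_\Gamma=\mathbb{F}^{-}L_d\circ F_{L_d}=\mathbb{F}^{+}L_d$ by the discrete Euler--Lagrange equations; your version buys the conceptual identification with $\operatorname{graph}(\tilde{F}_{L_d})$, while the paper's is shorter and avoids invoking Proposition \ref{exp:var}.

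The converse, however, has a genuine gap. The ``mirror symmetry'' you invoke does not exist at the level of the data: $\Gamma$ is an \emph{explicit forward} difference equation, determining $q_{k+1}$ from $(q_{k-1},q_k)$, and interchanging $q_{k-1}$ with $q_{k+1}$ does not produce an explicit SOdE of the same type unless $\Phi_\Gamma$ is invertible. Concretely, if you rerun the Poincar\'{e}-lemma step on $\gamma_{F^{+},\Gamma}^{*}\Omega_Q=0$, the base points of $\gamma_{F^{+},\Gamma}(q_{k-1},q_k)$ are $(q_k,\tilde{\Gamma}(q_{k-1},q_k))$, so the potential you obtain is a function $\tilde{L}_d(q_{k-1},q_k)$ with
\[
d\tilde{L}_d=F^{+}_i\bigl(q_k,\tilde{\Gamma}(q_{k-1},q_k)\bigr)\,d\tilde{\Gamma}^i-F^{+}_i(q_{k-1},q_k)\,dq_k^i\,,
\]
and neither $D_1\tilde{L}_d$ nor $D_2\tilde{L}_d$ equals $\pm F^{+}$; you recover $F^{+}=\mathbb{F}^{+}L_d$ and the DEL equations only after substituting $L_d:=\tilde{L}_d\circ\Phi_\Gamma^{-1}$, which presupposes that $\Phi_\Gamma$ is a local diffeomorphism. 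That is the one nontrivial point, and it is not among the items you list as requiring care (signs, regularity of the potential). It does follow from your hypotheses: the $dq_{k-1}\wedge dq_k$ component of the isotropy condition expresses the invertible matrix $\bigl(\partial F^{+}_r/\partial q_{k-1}^j\bigr)$ as a product of some matrix with $\bigl(\partial\tilde{\Gamma}^i/\partial q_{k-1}^j\bigr)$, forcing the latter to be invertible --- this is exactly the computation the paper performs before setting $F:=F^{+}\circ\Phi_\Gamma^{-1}$. Supply that step explicitly and your converse closes.
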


\begin{proof}
If $F$ as in the statement exists then we can define $F^{+}=F\circ\Phi_\Gamma$. Since $F$ is a local diffeomorphism, from condition (\ref{dHC2}) we get that $\frac{\partial \Gamma^l}{\partial q_{k-1}^i}$ is regular, that is, $\Phi_\Gamma$ is a local diffeomorphism and therefore so is $F^{+}$.
\[
\xymatrix{
Q\times Q\times Q\times Q \ar[rr]^{\Phi_\Gamma \times \Phi_\Gamma} \ar@/^2pc/[rrrr]^{F^{+}\times F^{+}} & & Q\times Q\times Q\times Q \ar[rr]^{F \times F} & &  T^*Q \times T^*Q \\
Q\times Q \ar[rr]^{\Phi_\Gamma} \ar[u]^\Gamma \ar@/_2pc/[rrrr]_{F^{+}}  & &  Q\times Q \ar[rr]^{F} \ar[u]^\Gamma  & & T^*Q
}
\]
On the other hand, if we impose that $\operatorname{Im}(F^{+}\times F^{+})\circ \Gamma$ is a Lagrangian submanifold, then the condition we obtain corresponding to the vanishing of the $dq_k \wedge dq_{k-1}$ factor is 
\[
\frac{\partial F^{+}_r}{\partial q_{k-1}^j}(q_{k-1},q_k) = \left(- \frac{\partial F^{+}_i}{\partial Q_1^r}(q_k,\Gamma_{k-1,k}) - \frac{\partial F^{+}_i}{\partial Q_2^s}(q_k,\Gamma_{k-1,k})\frac{\partial \Gamma^s}{\partial q_k^r} + \frac{\partial F^{+}_s}{\partial Q_2^i}(q_k,\Gamma_{k-1,k})\frac{\partial \Gamma^s}{\partial q_k^r}\right)\frac{\partial \Gamma^i}{\partial q_{k-1}^j} \, ,
\]
which implies that  $\Phi_\Gamma$ is a local diffeomorphism since $F^{+}$ is a local diffeomorphism. Therefore we can locally define $F=F^{+}\circ \Phi^{-1}_\Gamma$, which is also a local diffeomorphism.

Finally, from the commutativity of the above diagram, we have that $\operatorname{Im}(F^{+}\times F^{+})\circ \Gamma$ is Lagrangian if and only if $\operatorname{Im}(F\times F)\circ \Gamma$ is Lagrangian.
\end{proof}

The following result is a discrete analogue of Theorem \ref{crampin}. 

\begin{prop} \label{discrete-crampin}
An explicit second order difference equation $\Gamma:Q\times Q\longrightarrow Q\times Q\times Q\times Q$ is variational if and only if there is a nondegenerate two-form $\Omega_d$ on $Q\times Q$ such that
\begin{enumerate}
\item $\mathcal{L}^{d}_{\Gamma}\Omega_d=0\, ,$
\item $\Omega_d(V_1,V_2)=0$ for all $V_1,V_2\in \operatorname{Ker}(T\alpha_Q)\, ,$
\item $d\Omega_d=0\, ,$
\end{enumerate}
where $\mathcal{L}^{d}_\Gamma\Omega_d:=(\Phi_{\Gamma})^{*}\Omega_d-\Omega_d$ is regarded as a discrete analogue of the Lie derivative.
\end{prop}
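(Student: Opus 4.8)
The plan is to deduce the statement from Proposition \ref{exp:var} by setting up a precise dictionary between a fibre-preserving diffeomorphism $F$ and the two-form $\Omega_d$. The bridge is the identity
\[
\gamma_{F,\Gamma}^{*}\Omega_{Q}=\Phi_{\Gamma}^{*}(F^{*}\omega_{Q})-F^{*}\omega_{Q},
\]
which follows by observing that $\alpha_{T^*Q}\circ\gamma_{F,\Gamma}=F$ (because $\alpha_{Q\times Q}\circ\Gamma=\operatorname{Id}$) and $\beta_{T^*Q}\circ\gamma_{F,\Gamma}=F\circ\Phi_{\Gamma}$ (because $\beta_{Q\times Q}\circ\Gamma=\Phi_{\Gamma}$), and then pulling back $\Omega_{Q}=\beta_{T^*Q}^{*}\omega_{Q}-\alpha_{T^*Q}^{*}\omega_{Q}$. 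Once we put $\Omega_{d}:=F^{*}\omega_{Q}$, the right-hand side is exactly $\mathcal{L}^{d}_{\Gamma}\Omega_{d}$, so the isotropy condition $\gamma_{F,\Gamma}^{*}\Omega_{Q}=0$ of Proposition \ref{exp:var} becomes precisely condition (i). The whole proof is then a matter of passing back and forth between $F$ and $\Omega_d=F^{*}\omega_{Q}$ along this identity.

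For the direct implication I would assume $\Gamma$ is variational and take the local diffeomorphism $F\colon Q\times Q\to T^{*}Q$ over $\alpha_{Q}$ provided by Proposition \ref{exp:var}, for which $\operatorname{Im}(\gamma_{F,\Gamma})$ is Lagrangian. Define $\Omega_{d}:=F^{*}\omega_{Q}$. Condition (iii) is immediate since $\Omega_{d}=-d(F^{*}\theta_{Q})$ is exact; nondegeneracy follows from $F$ being a local diffeomorphism and $\omega_{Q}$ symplectic. For condition (ii), the relation $\pi_{Q}\circ F=\alpha_{Q}$ gives $TF(\operatorname{Ker} T\alpha_{Q})\subseteq \operatorname{Ker} T\pi_{Q}$, and $\omega_{Q}$ vanishes on pairs of $\pi_Q$-vertical vectors, so $\Omega_{d}$ vanishes on $\operatorname{Ker} T\alpha_Q$. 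Finally condition (i) is the identity above read off the isotropy of the Lagrangian submanifold $\operatorname{Im}(\gamma_{F,\Gamma})$, i.e. $\mathcal{L}^{d}_{\Gamma}\Omega_{d}=\gamma_{F,\Gamma}^{*}\Omega_{Q}=0$.

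For the converse I would start from a nondegenerate $\Omega_{d}$ satisfying (i)--(iii) and reconstruct $F$ so that $F^{*}\omega_{Q}=\Omega_{d}$. By (iii) and the Poincar\'e lemma, locally $\Omega_{d}=-d\theta_{d}$. The key point is that condition (ii) lets me choose the primitive semibasic with respect to $\alpha_{Q}$: writing $\theta_{d}=P_{i}\,dq_{k-1}^{i}+R_{i}\,dq_{k}^{i}$, condition (ii) forces $\partial R_{i}/\partial q_{k}^{j}=\partial R_{j}/\partial q_{k}^{i}$, so by a fibrewise Poincar\'e lemma $R_{i}=\partial g/\partial q_{k}^{i}$ for some $g$, and replacing $\theta_{d}$ by $\theta_{d}-dg$ removes the $dq_{k}$ part without changing $\Omega_{d}$. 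The resulting semibasic one-form $\theta_{d}=F_{i}\,dq_{k-1}^{i}$ corresponds to the fibre-preserving map $F(q_{k-1},q_{k})=F_{i}(q_{k-1},q_{k})\,(dq^{i})|_{q_{k-1}}$ over $\alpha_{Q}$, and the tautological property $F^{*}\theta_{Q}=\theta_{d}$ yields $F^{*}\omega_{Q}=\Omega_{d}$. The Jacobian of $F$ is block triangular with diagonal blocks $\operatorname{Id}$ and $(\partial F_{i}/\partial q_{k}^{j})$; since (ii) makes the $dq_{k}\wedge dq_{k}$ block of $\Omega_{d}$ vanish, nondegeneracy of $\Omega_{d}$ forces $(\partial F_{i}/\partial q_{k}^{j})$ to be invertible, so $F$ is a local diffeomorphism over $\alpha_Q$. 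The identity then gives $\gamma_{F,\Gamma}^{*}\Omega_{Q}=\mathcal{L}^{d}_{\Gamma}\Omega_{d}=0$ by (i); since $\alpha_{T^*Q}\circ\gamma_{F,\Gamma}=F$ is a local diffeomorphism, $\gamma_{F,\Gamma}$ is an immersion onto a half-dimensional isotropic submanifold, hence $\operatorname{Im}(\gamma_{F,\Gamma})$ is Lagrangian, and Proposition \ref{exp:var} concludes that $\Gamma$ is variational.

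The main obstacle is the semibasic-primitive step in the converse: producing a primitive of $\Omega_{d}$ that annihilates $\operatorname{Ker} T\alpha_{Q}$ is what turns the algebraic condition (ii) into the geometric statement that $F$ has base point $q_{k-1}$, and it is essential that the fibrewise closedness extracted from (ii) integrates (fibrewise Poincar\'e lemma) so that the correcting $dg$ exists. Everything else --- the nondegeneracy bookkeeping that makes $F$ a local diffeomorphism and the identity relating $\gamma_{F,\Gamma}^{*}\Omega_{Q}$ to $\mathcal{L}^{d}_{\Gamma}\Omega_{d}$ --- is routine once this normal form for the primitive is in hand.
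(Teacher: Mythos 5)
Your proposal is correct and follows essentially the same route as the paper: it sets $\Omega_d:=F^*\omega_Q$ in the forward direction, and in the converse it performs the same normalization of a local primitive of $\Omega_d$ to a semibasic one-form (subtracting $dg$, exactly the paper's $\bar\Theta=\Theta-dh$), reads that form as a fibre map $F$ over $\alpha_Q$, and extracts the local diffeomorphism property from nondegeneracy in the same way. The only difference is presentational: you verify condition (i) through the invariant identity $\gamma_{F,\Gamma}^{*}\Omega_{Q}=\Phi_{\Gamma}^{*}(F^{*}\omega_{Q})-F^{*}\omega_{Q}$, whereas the paper checks it by a coordinate computation using the discrete Helmholtz conditions; both arguments are sound.
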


\begin{proof}
If $\Gamma$ is variational then we can either use $F$ from Proposition \ref{exp:var} or $F^{+}$ from Proposition \ref{prop-pr2} to define the two-form $\Omega_{d}:=F^{*}\omega_{Q}=(F^+)^{*}\omega_{Q}$, which clearly satisfies \textit{(iii)} and the nondegeneracy requirement. From its coordinate expression,
\begin{align*}
dq_{k-1}^i\wedge dF_i(q_{k-1},q_k)&=\left(\frac{\partial F_i}{\partial q_{k-1}^j}\right)dq_{k-1}^i\wedge dq_{k-1}^j+\left(\frac{\partial F_i}{\partial q_k^j}\right)dq_{k-1}^i\wedge dq_k^j\\
&\stackrel{\mathclap{(\ref{dHC1})}}{=}\left(\frac{\partial F_i}{\partial q_k^j}\right)dq_{k-1}^i\wedge dq_k^j\, ,
\end{align*}
condition  \textit{(ii)} is also clear. Finally
\begin{align*}
\mathcal{L}^{d}_\Gamma\Omega_d=(\Phi_{\Gamma})^{*}\Omega_d-\Omega_d&=\left(\frac{\partial F_i (q_k, \Gamma(q_{k-1},q_k))}{\partial Q_2^j}\right)dq_{k}^i\wedge d\Gamma^j - \left(\frac{\partial F_i}{\partial q_k^j}\right)dq_{k-1}^i\wedge dq_k^j\\
&=\frac{\partial F_i}{\partial Q_2^l} (q_k, \Gamma_{k-1,k})\frac{\partial \Gamma^l}{\partial q^j_{k}}dq_{k}^i\wedge dq_{k}^j\\
&\mathrel{\phantom{=}}+\left(\frac{\partial F_i}{\partial Q_2^l} (q_k, \Gamma_{k-1,k})\frac{\partial \Gamma^l}{\partial q^j_{k-1}}+\frac{\partial F_j}{\partial Q_2^i}(q_{k-1},q_k) \right) dq_{k}^i\wedge dq^j_{k-1}=0\\
\end{align*}
since the discrete Helmholtz conditions (\ref{dHC2}) and (\ref{dHC32}) are satisfied by $F$.

Conversely, let $\Omega$ be a nondegenerate two-form on $Q\times Q$ satisfying \textit{(i)-(iii)}. From \textit{(iii)}, locally $\Omega=d\Theta$ and from \textit{(ii)} $\Theta$ has the local expression
\[
\Theta=\alpha_idq_{k-1}^i+\frac{\partial h}{\partial q_k^i}(q_{k-1},q_k)dq_k^i
\]
for a locally defined map $h:Q\times Q\longrightarrow \mathbb{R}$. Then take $\bar{\Theta}=\Theta-dh$, which satisfies $\bar{\Theta}(V)=0$ for all $V\in \operatorname{Ker}(T\alpha_Q)$ and $d\bar{\Theta}=\Omega$. Then $F:Q\times Q \longrightarrow T^*Q$ can be defined by
\[
\langle F(q_{k-1},q_k), v_{q_{k-1}} \rangle=\langle \bar{\Theta}(q_{k-1},q_k), V_{v_{q_{k-1}}} \rangle \mbox{ for all }  v_{q_{k-1}}\in TQ\, ,
\]
where $V_{v_{q_{k-1}}}\in T_{(q_{k-1},q_k)}(Q\times Q)$ is any vector satisfying $T\alpha_Q(V_{v_{q_{k-1}}})=v_{q_{k-1}}$.
The first condition,
\[
\mathcal{L}_{\Gamma}^d\Omega=(\Phi_\Gamma)^*d\bar{\Theta}-d\bar{\Theta}=d((\Phi_\Gamma)^*\bar{\Theta}-\bar{\Theta})=d\mathcal{L}_{\Gamma}^d\bar{\Theta}=0 \, ,
\] 
implies the local existence of a discrete Lagrangian $L_d:Q\times Q\longrightarrow \mathbb{R}$ such that $\mathcal{L}_{\Gamma}^d\bar{\Theta}=dL_d$. From the local expression it is clear that $\gamma_{F,\Gamma}=\Psi\circ dL_d$. As $\operatorname{Im}(dL_d)$ is a Lagrangian submanifold of $T^*(Q\times Q)$, $\operatorname{Im}(\gamma_{F,\Gamma})$ is also a Lagrangian submanifold of $T^*Q\times T^*Q$, that is, $\Gamma$ is variational. Finally, for $\Omega$ to be nondegenerate it is necessary to have $\left( \frac{\partial F}{\partial q_k} \right)$ nondegenerate, that is, $F$ is a local diffeomorphism.
\end{proof} 

\begin{remark}
The second condition in Proposition \ref{discrete-crampin} can be replaced by 
\begin{enumerate}
\item[(ii)'] $\Omega_d(V_1,V_2)=0$ for all $V_1,V_2\in \operatorname{Ker}(T\beta_Q)$,
\end{enumerate} 
which corresponds to the absence of the term $dq_{k-1}\wedge dq_{k-1}$  instead of the term $dq_k\wedge dq_k$ in $\Omega_d$.
\end{remark}

\begin{remark}[The one-dimensional case]
In the continuous one-dimensional case, that is, when we are given just one second order differential equation $\ddot{q}=\Gamma(t,q,\dot{q})$, an old result by Sonin \cite{Sonin1886} shows that a regular Lagrangian always exists. This can be proved by showing that the only Helmholtz condition that remains, which is $\frac{\partial g}{\partial t}+\dot{q}\frac{\partial g}{\partial q}+\Gamma\frac{\partial g}{\partial \dot{q}}=\frac{\partial \Gamma}{\partial \dot{q}}g$, always admits a solution $g\not=0$.

In the discrete autonomous setting the only Helmholtz condition to be studied is
\begin{equation*}
\frac{\partial F}{\partial Q_{2}}(q_{k-1},q_k)+\frac{\partial F}{\partial Q_{2}}(q_k,\Gamma(q_{k-1},q_k))\frac{\partial \Gamma}{\partial q_{k-1}}(q_{k-1},q_k)=0\, ,
\end{equation*}
that is, the problem reduces to determining whether or not the functional equation
\begin{equation} \label{func}
g(y,f(x,y))\frac{\partial f}{\partial x}(x,y)=-g(x,y)
\end{equation}
has a nonzero solution $g$ for a given map $f:\mathbb{R}^2 \longrightarrow \mathbb{R}$.

Assume $\Gamma$ is linear, that is, $q_{k+1}=aq_{k-1}+bq_k$ for some constants $a$ and $b$, with $a\not=0$. 
Does (\ref{func}) admit a solution $g\not\equiv 0$? We do not have a classification even for this linear case, but some positive examples follow.
\begin{itemize}
%\item $a=0$: there is no solution $g\not=0$
\item If $a=-1$, then any  constant $g$ is a solution.
\item If $a=1$, $b=0$, then $g(y,x)=-g(x,y)$ admits a solution, for instance $g(x,y)=x-y$.
\item If $a<0$, $b=0$, then $g(y,ax)a=-g(x,y)$ admits a solution $g(x,y)=\frac{1}{|xy|}$ away from $xy=0$.
\item If $a>0$, $b=0$, then $g(x,y)=\frac{1}{x|y|}-\frac{1}{y|x|}$ is a solution away from $xy\geq 0$.
\item If $a\not=0$, $b=\frac{a^3-1}{a}$, then $g(x,y)=-a^2Bx+By$ is a solution for all $B\not=0$, away from $(0,0)$.
\end{itemize}
It would be interesting to obtain a complete classification of the variationality of second order difference equations in low dimensions.

\end{remark}

\subsection{Discrete inverse problem for implicit second order difference equations} \label{implicit}

Now we go back to the implicit case, where a system of second order difference equations is given by a submanifold $M\subset Q\times Q\times Q$. We assume that $M$ is given by the vanishing of functions $\Phi^i(q_{k-1},q_k,q_{k+1})$, $i=1,\ldots,n$, such that $C:=\left(\frac{\partial \Phi}{\partial q_{k+1}}\right)$ is invertible. The problem then consists in deciding whether or not the original system is equivalent to a discrete Lagrangian system.

\begin{definition}
The implicit system of second order difference equations $\Phi^i(q_{k-1},q_k,q_{k+1})=0$, $i=1,\ldots,n$, is variational if and only if there is
a regular discrete Lagrangian $L_d:Q\times Q\longrightarrow \mathbb{R}$ such that both systems
\begin{equation*}
\Phi(q_{k-1},q_k,q_{k+1})=0 \quad \mbox{and} \quad D_1L_d(q_k,q_{k+1})+D_2L_d(q_{k-1},q_k)=0
\end{equation*}
admit the same solutions.
\end{definition}

\begin{prop} \label{prop:sode:implicit}
An implicit SOdE locally given by the vanishing of constraints
\[
\Phi^i(q_{k-1},q_k,q_{k+1})=0, \quad i=1,\ldots,n \, ,
\]
is variational if and only if there is a local diffeomorphism $F:Q\times Q\rightarrow T^*Q$ satisfying $\alpha_Q=\pi_Q\circ F$ 
and such that $\operatorname{Im}(\left. (F\times F) \right|_M)$ is a Lagrangian submanifold of $(T^*Q\times T^*Q,\Omega_Q)$, where 
\[
M=\left\{(q_{k-1},q_k,q_{k+1})\in Q\times  Q \times Q : \Phi(q_{k-1},q_k,q_{k+1})=0 \right\} \, .
\]
\end{prop}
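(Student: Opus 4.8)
The plan is to reduce the implicit problem to the explicit one already resolved in Proposition~\ref{exp:var}, using the invertibility of $C=\left(\frac{\partial \Phi}{\partial q_{k+1}}\right)$ together with the implicit function theorem. First I would use the hypothesis that $C$ is invertible to solve the constraints $\Phi(q_{k-1},q_k,q_{k+1})=0$ locally for $q_{k+1}=\tilde{\Gamma}(q_{k-1},q_k)$ on suitable neighborhoods. Two things follow at once. On the one hand, $M$ is locally exactly the graph $\{(q_{k-1},q_k,\tilde{\Gamma}(q_{k-1},q_k))\}$; in particular the $n$ differentials $d\Phi^i$ are independent (their $q_{k+1}$-block is regular), so $M$ is a genuine $2n$-dimensional submanifold, and under the identification $\ddot{Q}_d\cong Q\times Q\times Q$ it coincides with $\operatorname{Im}(\Gamma)$ for the explicit SOdE $\Gamma\colon(q_{k-1},q_k)\longmapsto(q_{k-1},q_k,q_k,\tilde{\Gamma}(q_{k-1},q_k))$. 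On the other hand, since $C$ is invertible the solution sequences of $\Phi=0$ and of $q_{k+1}=\tilde{\Gamma}(q_{k-1},q_k)$ coincide locally, so the implicit SOdE is variational if and only if the explicit SOdE $\tilde{\Gamma}$ is variational.

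Next I would identify the two images. Parametrizing $M$ by $(q_{k-1},q_k)$ through $q_{k+1}=\tilde{\Gamma}(q_{k-1},q_k)$, the restriction $(F\times F)|_M$ sends a point $(q_{k-1},q_k,q_{k+1})\in M$ to $\bigl(F(q_{k-1},q_k),F(q_k,q_{k+1})\bigr)=\bigl(F(q_{k-1},q_k),F(q_k,\tilde{\Gamma}(q_{k-1},q_k))\bigr)$, which is precisely $\gamma_{F,\Gamma}(q_{k-1},q_k)$. Hence $\operatorname{Im}\bigl((F\times F)|_M\bigr)=\operatorname{Im}(\gamma_{F,\Gamma})$. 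Because $F$ is a local diffeomorphism over the identity with $\alpha_Q=\pi_Q\circ F$, and $F\times F$ is then a local diffeomorphism, this restriction is an embedding whose image has the correct dimension $2n$ to be Lagrangian. The hypotheses of Proposition~\ref{exp:var} applied to $\tilde{\Gamma}$ are therefore met, and that proposition yields: $\tilde{\Gamma}$ is variational if and only if $\operatorname{Im}(\gamma_{F,\Gamma})=\operatorname{Im}\bigl((F\times F)|_M\bigr)$ is a Lagrangian submanifold of $(T^*Q\times T^*Q,\Omega_Q)$. Combining this with the equivalence of the two notions of variationality from the first step gives exactly the claimed statement.

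The main point to check carefully is the matching of the two variationality notions under the passage from $\Phi=0$ to its explicit resolution $q_{k+1}=\tilde{\Gamma}(q_{k-1},q_k)$: one must confirm that replacing $\Phi=0$ by $q_{k+1}=\tilde{\Gamma}(q_{k-1},q_k)$ leaves unchanged the solution set of the combined system consisting of the difference equation together with the discrete Euler--Lagrange equations for a regular $L_d$. This is where the invertibility of $C$ is essential, since it both guarantees that the resolution $\tilde{\Gamma}$ exists and is unique on the relevant neighborhood and ensures that $M$ is cut out transversally so that $(F\times F)|_M$ is an immersion of the expected dimension. Once these identifications are established, the equivalence is an immediate consequence of Proposition~\ref{exp:var}, with no further computation needed.
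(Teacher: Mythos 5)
Your proof is correct and follows essentially the same route as the paper's: both use the regularity of $C=\left(\frac{\partial\Phi}{\partial q_{k+1}}\right)$ and the implicit function theorem to resolve $\Phi=0$ locally as $q_{k+1}=\tilde{\Gamma}(q_{k-1},q_k)$, identify $\operatorname{Im}\bigl((F\times F)|_M\bigr)$ with $\operatorname{Im}(\gamma_{F,\Gamma})$, and then invoke the explicit-case argument of Proposition~\ref{exp:var}. The only cosmetic difference is that you package the two directions as a formal reduction to Proposition~\ref{exp:var}, whereas the paper re-traverses the chain of solution-set equivalences explicitly in each direction.
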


\begin{proof}
Assume first that a local diffeomorphism $F$ with the stated properties exists. Since we have assumed that $C$ is regular, we can use the implicit function theorem to get
for each $(q_{k-1},q_k,q_{k+1})\in M$ neighborhoods $U$ of $(q_{k-1},q_k)$, $V$ of $q_{k+1}$ and $\tilde{\Gamma} : U \longrightarrow V$ such that
\begin{eqnarray*}
&& \left\{ (q_{k-1},q_k,q_{k+1})\in U\times V : \Phi(q_{k-1},q_k,q_{k+1})=0     \right\} \\
& \equiv & \left\{  (q_{k-1},q_k,q_{k+1})\in U\times V : q_{k+1}= \tilde{\Gamma}(q_{k-1},q_k)  \right\} \\
& \equiv & \left\{ (q_{k-1},q_k,q_{k+1})\in U\times V : F(q_k,q_{k+1})-F(q_k,\tilde{\Gamma}(q_{k-1},q_k))=0 \right\} \\
& \equiv & \left\{ (q_{k-1},q_k,q_{k+1})\in U\times V : D_1L_d(q_k,q_{k+1})+D_2L_d(q_{k-1},q_k)=0 \right\} 
\end{eqnarray*}
for some locally defined Lagrangian $L_d$, as explained in Section \ref{explicit}.

Now assume $\Phi(q_{k-1},q_k,q_{k+1})=0$ is variational, that is, the two sets of equations
\begin{equation*}
\Phi(q_{k-1},q_k,q_{k+1})=0 \quad \mbox{and} \quad D_1L_d(q_k,q_{k+1})+D_2L_d(q_{k-1},q_k)=0 \, ,
\end{equation*}
have the same solutions for some locally defined Lagrangian $L_d$. If we choose 
\[
F(q_{k-1},q_k)=-D_1L_d(q_{k-1},q_{k})
\]
then we have 
\begin{align*}
\left\{D_1L_d(q_k,q_{k+1})+D_2L_d(q_{k-1},q_k)=0\right\}&\equiv\left\{\Phi(q_{k-1},q_k,q_{k+1})=0\right\} \\
\equiv\left\{ q_{k+1}= \tilde{\Gamma}(q_{k-1},q_k)  \right\} &\equiv \left\{ -D_1L_d(q_k,q_{k+1}) = -D_1L_d(q_k,\tilde{\Gamma}(q_{k-1},q_k))  \right\}
\end{align*}
which implies 
\[
-D_2L_d(q_{k-1},q_k)=D_1L_d(q_k,\tilde{\Gamma}(q_{k-1},q_k)) \, .
\]
Then $(F\times F)(M)$ is locally given by
\[
\left( q_{k-1}, -D_1 L_d (q_{k-1},q_k), q_k, -D_1 L_d (q_k, \tilde{\Gamma}(q_{k-1},q_k)) \right) =  \left( q_{k-1}, -D_1 L_d (q_{k-1},q_k), q_k, D_2 L_d (q_{k-1}, q_k) \right) 
\]
which is clearly Lagrangian in $(T^*Q\times T^*Q,\Omega_Q)$.
\end{proof}

In \cite{BFM} the Helmholtz conditions for explicit SODEs are derived using Lagrangian submanifolds, as recalled at the beginning of Section \ref{sec2}. For an implicit SODE we can also derive Helmholtz conditions using Lagrangian submanifolds, as described in Section \ref{appendix}. Analogously now we can obtain the implicit discrete  Helmholtz conditions.

The submanifold $(F\times F)(M)$ is locally given by
\[
\left(
q_{k-1},F(q_{k-1},q_k),q_k,F(q_k,q_{k+1})
\right)
\]
plus the condition $\Phi^i(q_{k-1},q_k,q_{k+1})=0$ for all $i=1,\ldots,n$.
If we write $\tilde{\omega}=(F\times F)^*\Omega_Q $ then locally 
\begin{align*}
\tilde{\omega}&= dF_i(q_k,q_{k+1})\wedge dq_k^i-dF_i(q_{k-1},q_k)\wedge dq_{k-1}^i \\
&= \frac{\partial F_i}{\partial Q_1^j}(q_k,q_{k+1}) dq_k^j \wedge dq_k^i +\frac{\partial F_i}{\partial Q_2^j}(q_k,q_{k+1}) dq_{k+1}^j \wedge dq_k^i \\
&\mathrel{\phantom{=}} -\frac{\partial F_i}{\partial Q_1^j}(q_{k-1},q_k) dq_{k-1}^j \wedge dq_{k-1}^i - \frac{\partial F_i}{\partial Q_2^j}(q_{k-1},q_k)dq_k^j \wedge dq_{k-1}^i \, .
\end{align*}
Then the condition that $(F\times F)(M)$ be a Lagrangian submanifold of $(T^*Q\times T^*Q,\Omega_Q)$ is equivalent to the condition $((F\times F) \circ i_M)^*\Omega_Q=0$ and can be written as $\tilde{\omega}(X,Y)=0$ for all $X,Y\in \mathfrak{X}(M)$. Therefore we will compute a local basis for $\mathfrak{X}(M)$. By imposing that $X\in \mathfrak{X}(Q\times Q\times Q)$ satisfies $d\Phi(X)=0$ we get
\[
A_i=\frac{\partial}{\partial q_{k-1}^i}-\frac{\partial \Phi^j}{\partial q_{k-1}^i}(C^{-1})^r_j\frac{\partial}{\partial q_{k+1}^r} \, , \quad
B_i=\frac{\partial}{\partial q_k^i}-\frac{\partial \Phi^j}{\partial q_k^i}(C^{-1})^r_j\frac{\partial}{\partial q_{k+1}^r} \, ,
\]
where $C^{-1}$ denotes the inverse matrix of $C=\left(\frac{\partial \Phi}{\partial q_{k+1}}\right)$.
Finally the \textbf{implicit discrete Helmholtz conditions} 
\[
\tilde{\omega}(A_i,A_j)=0, \quad   \tilde{\omega}(A_i,B_j)=0 \quad \mbox{ and } \quad \tilde{\omega}(B_i,B_j)=0
\]  
are respectively given by
\begin{align*}
\frac{\partial F_i}{\partial Q_1^j}(q_{k-1},q_k) &= \frac{\partial F_j}{\partial Q_1^i}(q_{k-1},q_k) \, , \\
\frac{\partial F_j}{\partial Q_1^i}(q_k,q_{k+1})+\frac{\partial F_j}{\partial Q_2^k}(q_k,q_{k+1})\frac{\partial \Phi^r}{\partial q_k^i}(C^{-1})^k_r &= \frac{\partial F_i}{\partial Q_1^j}(q_k,q_{k+1})+\frac{\partial F_i}{\partial Q_2^k}(q_k,q_{k+1})\frac{\partial \Phi^r}{\partial q_k^j}(C^{-1})^k_r   \, , \\
\frac{\partial F_i}{\partial Q_2^j}(q_{k-1},q_k)&=\frac{\partial F_j}{\partial Q_2^k}(q_k,q_{k+1})\frac{\partial \Phi^r}{\partial q_{k-1}^i}(C^{-1})^k_r  \, .
\end{align*}

\begin{remark}
If an implicit system is variational  then it is possible to find functions $g_{ij}(q_{k-1},q_k,q_{k+1})$ such that
\[
g_{ij}(q_{k-1},q_k,q_{k+1})\Phi^j(q_{k-1},q_k,q_{k+1})=\left[D_1L_d(q_k,q_{k+1})+D_2L_d(q_{k-1},q_k)\right]_i =:G_i(q_{k-1},q_k,q_{k+1}) \, ,
\]
as shown for instance in \cite{teitelboim}. Indeed, since $\left(\frac{\partial \Phi}{\partial q_{k+1}}\right)$ is regular, we can consider a coordinate change
\[
(q_{k-1},q_k,q_{k+1}) \longrightarrow (q^i_{k-1}, q^i_k, y^i:=\Phi^i(q_{k-1},q_k,q_{k+1})) \, ,
\]
for which now the submanifold $M$  defines the second order difference equation $y^i=0$.
If we let $G_j(q^i_{k-1},q^i_k,y^i)$ denote the function $G_j$ expressed in the new coordinates, then we have
\[
G_j(q^i_{k-1},q^i_k,y^i)=\int_0^1 \frac{d}{dt}G_j(q_{k-1},q_k,ty)dt=y^i\overbrace{\int_0^1\frac{\partial G_j}{\partial y^i}(q_{k-1},q_k,ty)dt}^{g_{ij}(q_{k-1},q_k,q_{k+1})}=\Phi^i g_{ij} \, .
\]
\end{remark}

\section{Correspondence between continuous and discrete variational SODEs} \label{sec4}

In this section, we will analyze the relationship between the inverse problem for discrete variational calculus and the inverse problem of the calculus of variations in the continuous setting.

\subsection{From the continuous to the discrete setting}
First we start from a continuous SODE $\Gamma$ on $TQ$. If it is variational then we will associate it with a discrete variational SOdE.
Denote by $\Phi^{\Gamma}$ the flow of  $\Gamma$,
\[
\Phi^{\Gamma}: U\subseteq \mathbb{R} \times TQ \to TQ \, ,
\]
where $U$ is an open subset of $\mathbb{R} \times TQ$. 
For expository simplicity we will assume that $\Gamma$ is complete and $U=\mathbb{R} \times TQ$. We will use the notation $\Phi^{\Gamma}_t(v_q)=\Phi^{\Gamma}(t, v_q)$.

\begin{prop}\label{intermediate} A complete SODE $\Gamma$ on $TQ$ is variational if and only if there is a local diffeomorphism  $F: TQ\rightarrow T^*Q$ of fiber bundles over $Q$  such that $\operatorname{Im}(F\times F)\circ(\operatorname{Id}\times \Phi_t^{\Gamma})$ is a Lagrangian submanifold of $(T^{*}Q \times T^*Q, \Omega_Q=\beta_{T^*Q}^*\omega_Q-\alpha_{T^*Q}^*\omega_Q)$ for all $t\in \R$.
\end{prop}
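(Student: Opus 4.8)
The plan is to reduce the statement to Theorem \ref{cont-lagrangian} by observing that, for a fixed local diffeomorphism $F\colon TQ\to T^*Q$ of fiber bundles over $Q$, both the condition of that theorem and the condition in the present statement are equivalent to the single infinitesimal requirement that the vector field $X:=F_*\Gamma$ on $T^*Q$ be locally symplectic, i.e.\ $\mathcal{L}_X\omega_Q=0$. Since $F$ is a fiberwise diffeomorphism, $X$ is well defined locally by $X\circ F=TF\circ\Gamma$, and its local flow is the conjugate $\psi_t:=F\circ\Phi^{\Gamma}_t\circ F^{-1}$ of the flow of $\Gamma$.

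First I would treat the finite-time side. Writing $\alpha=F(v_q)$ and using that $F$ is a local diffeomorphism,
\[
(F\times F)\circ(\operatorname{Id}\times\Phi^{\Gamma}_t)(v_q)=\bigl(F(v_q),\,F(\Phi^{\Gamma}_t(v_q))\bigr)=\bigl(\alpha,\psi_t(\alpha)\bigr),
\]
so that $\operatorname{Im}\bigl((F\times F)\circ(\operatorname{Id}\times\Phi^{\Gamma}_t)\bigr)$ is exactly the graph of $\psi_t$. For the embedding $j_t=(\operatorname{Id},\psi_t)$ one computes $j_t^*\Omega_Q=\psi_t^*\omega_Q-\omega_Q$, so this graph is a Lagrangian submanifold of $(T^*Q\times T^*Q,\Omega_Q)$ precisely when $\psi_t$ is a symplectomorphism. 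Imposing this for every $t$ is, upon differentiating and using $\frac{d}{dt}\psi_t^*\omega_Q=\psi_t^*\mathcal{L}_X\omega_Q$ together with $\psi_0=\operatorname{Id}$, equivalent to $\mathcal{L}_X\omega_Q=0$.

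Next I would connect this to Theorem \ref{cont-lagrangian}. Since $X\circ F=TF\circ\Gamma$ and $F$ is a local diffeomorphism, $\operatorname{Im}(TF\circ\Gamma)$ and $\operatorname{Im}(X)$ coincide as submanifolds of $TT^*Q$. The fiber isomorphism $\flat_{\omega_Q}\colon TT^*Q\to T^*T^*Q$, $X\mapsto i_X\omega_Q$, pulls $\omega_{T^*Q}$ back to $-d_T\omega_Q$ by the very definition of $d_T\omega_Q$, and it carries $\operatorname{Im}(X)$ to $\operatorname{Im}(i_X\omega_Q)$, the image of the one-form $i_X\omega_Q$. As the sign of a symplectic form is irrelevant for the Lagrangian condition, and since the image of a one-form is Lagrangian for $\omega_{T^*Q}$ exactly when the one-form is closed, we obtain that $\operatorname{Im}(X)$ is Lagrangian in $(TT^*Q,d_T\omega_Q)$ if and only if $d(i_X\omega_Q)=\mathcal{L}_X\omega_Q=0$. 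Combining the two equivalences, for one and the same $F$ the hypothesis of Theorem \ref{cont-lagrangian} holds iff that of the present proposition does, and the theorem then yields the equivalence with variationality.

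The routine part consists of the two ``image is Lagrangian iff \dots'' identities above. I expect the main care to be required by the fact that $F$ is only a local diffeomorphism and that $\Phi^{\Gamma}_t$ does not preserve the fibers of $\tau_Q$: the vector field $X=F_*\Gamma$ and its flow $\psi_t$ exist only locally, so all statements must be read on suitable $F$-adapted open sets, and it is precisely the quantifier ``for all $t$'' that upgrades finite-time symplecticity of $\psi_t$ to the single condition $\mathcal{L}_X\omega_Q=0$ shared by both characterizations; completeness of $\Gamma$ guarantees that $\psi_t$ is defined for all $t\in\R$, though the germ at $t=0$ already suffices.
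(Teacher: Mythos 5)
Your proposal is correct and follows essentially the same route as the paper's proof: both reduce to Theorem \ref{cont-lagrangian} by showing that the Lagrangian-submanifold condition in $(TT^*Q,d_T\omega_Q)$ and the graph condition in $(T^*Q\times T^*Q,\Omega_Q)$ are each equivalent to a single Lie-derivative identity, which is then integrated/differentiated along the flow. The only (cosmetic) difference is that you phrase this identity on the cotangent side as $\mathcal{L}_{F_*\Gamma}\omega_Q=0$, whereas the paper pulls everything back to $TQ$ and writes $\mathcal{L}_{\Gamma}F^*\omega_Q=0$; your version also fills in the routine verifications that the paper leaves implicit.
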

\begin{proof}
According to the characterization given in \cite{BFM} (and recalled in Section \ref{sec2}), $\Gamma$ is variational if and only if there exists a local diffeomorphism  $F: TQ\rightarrow T^*Q$ of fiber bundles over $Q$  such that 
$\operatorname{Im}(TF\circ\Gamma)$ is a Lagrangian submanifold of $(TT^{*}Q, d_T\omega_{Q})$. This is equivalent to the condition 
\[
\mathcal{L}_{\Gamma}F^*\omega_Q=0
\]
and therefore
\[
(\Phi_t^{\Gamma})^*(F^*\omega_Q)=F^*\omega_Q \, .
\]
The last equality is equivalent to the statement that $\operatorname{Im}(F\times F)\circ(\operatorname{Id}\times \Phi_t^{\Gamma})$ is a Lagrangian submanifold of $(T^{*}Q \times T^*Q, \Omega_Q)$.
\[
\xymatrix{
TQ\times TQ \ar[rrr]^{F\times F} &&& T^{*}Q \times T^*Q\ar[d]^{\alpha_{T^*Q}}  \\
TQ \ar[u]^{\operatorname{Id}\times \Phi_t^{\Gamma}} \ar[rrr]^{F} \ar[urrr]^{F\times (F \circ \Phi_t^{\Gamma}) \quad } &&&T^{*}Q \\
}
\]
\end{proof}

Now in order to define a variational discrete second order system, we need to introduce the exponential map associated with a second order differential equation $\Gamma$. Given a point $q_0\in Q$ and a positive real number $h_{0}$, we define
\[
\operatorname{exp}^{\Gamma}_{(h_{0}, q_{0})}(v_{q_0}) = \tau_{Q} ((
\Phi_{h_0}^{\Gamma})(v_{q_0})), \; \; \mbox{ for } v_{q_0}\in
T_{q_0}Q, 
\]
(assuming that $\Gamma$ is complete). 
For $h_0$ small enough it is possible to find  an open subset ${\mathcal U}  \subseteq TQ$ and an open subset $U$ of $Q$, with $q_{0} \in
U$ such that the map
\[
\begin{array}{rrcl}
\operatorname{exp}^{\Gamma}_{h_{0}}:& {\mathcal U}& \longrightarrow&U \times U \subseteq Q \times Q\\
& v &\longmapsto & (\tau_{Q}(v), \operatorname{exp}^{\Gamma}_{h_{0}}(v))
\end{array}
\]
is a diffeomorphism (see \cite{MMM3} for details).
Denote by 
$R^{e^-}_{h_{0}}: U \times U \to {\mathcal U}$
the inverse map of the diffeomorphism 
$\operatorname{exp}^{\Gamma}_{h_{0}}: {\mathcal U}  \to U \times U $.

\begin{thm} \label{thm-cont-disc}
Given a SODE $\Gamma$ on $TQ$, define the discrete second order difference equation
\[
\begin{array}{rrcl}
\Gamma_d:& U\times U&\longrightarrow &U\times U\times U\times U \\
 &  (q_{k-1}, q_k)&\longmapsto&(q_{k-1}, q_k, q_k,  (\tau_Q\circ \Phi_{2h}^{\Gamma}\circ R^{e^-}_{h})(q_{k-1}, q_k)) \, .
\end{array}
\]
If $\Gamma$ is variational then so is $\Gamma_d$ for $h$ small enough.
\end{thm}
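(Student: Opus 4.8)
The plan is to deduce the variationality of $\Gamma_d$ from that of $\Gamma$ by transporting the continuous Legendre-type diffeomorphism along $R^{e^-}_{h}$ and then invoking Proposition \ref{intermediate} together with the explicit characterization of Proposition \ref{exp:var}. Since $\Gamma$ is variational and complete, Proposition \ref{intermediate} provides a local diffeomorphism $F:TQ\to T^*Q$ of fiber bundles over $Q$ such that $\operatorname{Im}(F\times F)\circ(\operatorname{Id}\times\Phi^{\Gamma}_t)$ is a Lagrangian submanifold of $(T^*Q\times T^*Q,\Omega_Q)$ for every $t\in\R$. I would then take as discrete Legendre transformation the composition $F_d:=F\circ R^{e^-}_{h}:U\times U\to T^*Q$ and try to show that $\operatorname{Im}(\gamma_{F_d,\Gamma_d})$ is a Lagrangian submanifold, which by Proposition \ref{exp:var} is precisely the variationality of $\Gamma_d$.

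First I would check that $F_d$ is a local diffeomorphism over the identity, that is, $\pi_Q\circ F_d=\alpha_Q$. This is immediate: $R^{e^-}_{h}$ is a diffeomorphism onto $\mathcal{U}$ and $F$ is a local fiber diffeomorphism, so $F_d$ is a local diffeomorphism; moreover $\tau_Q\circ R^{e^-}_{h}=\alpha_Q$, since the base point of $R^{e^-}_{h}(q_{k-1},q_k)$ is $q_{k-1}$, and $\pi_Q\circ F=\tau_Q$, whence $\pi_Q\circ F_d=\alpha_Q$.

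The heart of the argument is the computation of $\operatorname{Im}(\gamma_{F_d,\Gamma_d})$. Writing $v:=R^{e^-}_{h}(q_{k-1},q_k)\in T_{q_{k-1}}Q$, so that $F_d(q_{k-1},q_k)=F(v)$ and $q_{k+1}:=\tilde\Gamma_d(q_{k-1},q_k)=\tau_Q(\Phi^{\Gamma}_{2h}(v))$, the essential observation is the semigroup identity
\[
R^{e^-}_{h}(q_k,q_{k+1})=\Phi^{\Gamma}_{h}(v).
\]
Indeed, $w:=\Phi^{\Gamma}_{h}(v)$ satisfies $\tau_Q(w)=q_k$ and $\tau_Q(\Phi^{\Gamma}_{h}(w))=\tau_Q(\Phi^{\Gamma}_{2h}(v))=q_{k+1}$, so $w$ fulfills the defining property of $R^{e^-}_{h}(q_k,q_{k+1})$, and uniqueness of the inverse of $\operatorname{exp}^{\Gamma}_{h}$ gives the claim. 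Consequently $F_d(q_k,q_{k+1})=F(\Phi^{\Gamma}_{h}(v))$ and
\[
\gamma_{F_d,\Gamma_d}(q_{k-1},q_k)=\bigl(F(v),F(\Phi^{\Gamma}_{h}(v))\bigr)=\bigl((F\times F)\circ(\operatorname{Id}\times\Phi^{\Gamma}_{h})\bigr)(v).
\]
As $(q_{k-1},q_k)$ ranges over $U\times U$, the vector $v$ ranges over $\mathcal{U}$, so $\operatorname{Im}(\gamma_{F_d,\Gamma_d})$ coincides with $\operatorname{Im}(F\times F)\circ(\operatorname{Id}\times\Phi^{\Gamma}_{h})$ restricted to $\mathcal{U}$, which is Lagrangian by Proposition \ref{intermediate} applied at $t=h$. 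Proposition \ref{exp:var} then yields that $\Gamma_d$ is variational.

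I expect the main obstacle to be the domain bookkeeping hidden in the phrase ``for $h$ small enough''. One must choose $h$ so that $\operatorname{exp}^{\Gamma}_{h}$ is a diffeomorphism $\mathcal{U}\to U\times U$, so that $\Phi^{\Gamma}_{h}(\mathcal{U})\subseteq\mathcal{U}$ and both pairs $(q_{k-1},q_k)$ and $(q_k,q_{k+1})$ lie in $U\times U$, and so that the uniqueness step legitimizing the semigroup identity is valid; the maps $F_d$ and $\gamma_{F_d,\Gamma_d}$ must then be restricted to a common neighborhood on which these conditions hold simultaneously. Granting this, the argument is essentially a transport of the continuous statement along $R^{e^-}_{h}$ and requires no further computation.
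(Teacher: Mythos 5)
Your proof is correct and follows essentially the same route as the paper: define $F_d=F\circ R^{e^-}_{h}$ and reduce to Proposition \ref{intermediate} at $t=h$ via the identity $(q_k,q_{k+1})=(\operatorname{exp}^{\Gamma}_{h}\circ\Phi^{\Gamma}_{h}\circ R^{e^-}_{h})(q_{k-1},q_k)$, which is exactly your semigroup identity $R^{e^-}_{h}(q_k,q_{k+1})=\Phi^{\Gamma}_{h}(v)$. Your write-up is in fact slightly more careful than the paper's about verifying that identity and about the domain restrictions implicit in ``$h$ small enough''.
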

\begin{proof}
Define $F_d: U\times U\subseteq Q\times Q\longrightarrow T^*Q$ by $F_d=F\circ R_h^{e^-}$, where $F$ exists from Theorem \ref{cont-lagrangian} if $\Gamma$ is variational. Then the proof is a consequence of the commutativity of the following diagram
\[
\xymatrix{
Q\times Q\times Q\times Q\ar[rrr]^{R^{e^-}_{h}\times R^{e^-}_{h}} &&&TQ\times TQ \ar[rrr]^{F\times F} &&& T^{*}Q \times T^*Q\ar[d]^{\alpha_{T^*Q}}  \\
Q\times Q\ar[rrr]^{R^{e^-}_{h}}\ar[u]^{\Gamma_d}  \ar@/_{10mm}/[rrrrrr]_{F_d}
&&&TQ \ar[u]^{\operatorname{Id}\times \Phi_h^{\Gamma}} \ar[rrr]^{F} \ar[urrr]^{F\times (F \circ \Phi_h^{\Gamma}) \quad } &&&T^{*}Q  \\
}
\]
and Proposition \ref{intermediate}, taking into account that 
\begin{align*}
(q_k, (\tau_Q\circ \Phi_{2h}^{\Gamma}\circ R^{e^-}_{h})(q_{k-1}, q_k))&=(\operatorname{exp}^{\Gamma}_{h}\circ \Phi_{h}^{\Gamma}\circ R^{e^-}_{h})(q_{k-1}, q_k)\\
&=((R^{e^-}_{h})^{-1}\circ \Phi_{h}^{\Gamma}\circ R^{e^-}_{h})(q_{k-1}, q_k)\, .
\end{align*}
\end{proof}

\subsection{From the discrete  to the continuous setting}\label{fromdc}

One of the most powerful techniques to understand the qualitative behavior of numerical methods is  backward error analysis, see \cite{hairer,serna}. The idea is that a symplectic integrator applied to a Hamiltonian system can be studied using the existence of the modified differential equation, which is a Hamiltonian system whose trajectories are arbitrarily close to the ones of the integrator.
We will turn to the Lagrangian side in this section and will find a continuous Lagrangian system which is arbitrarily close to a given variational SOdE.

Given a variational SOdE inducing a discrete flow
\[
\Psi_h(q_{k-1}, q_k)=(q_k, q_{k+1})
\]
depending on a small parameter $h>0$, is it possible to construct a continuous Lagrangian $L_h: TQ\rightarrow {\mathbb R}$, depending on $h$, such that the solutions $q: I \rightarrow Q$ of the corresponding Euler-Lagrange equations satisfy
\[
\Psi_h(q((k-1)h), q(kh))=(q(kh), q((k+1)h))\; ?
\]
This question cannot be solved with such generality~\cite{BenettinGiorgilli,CMS,serna}, but we will study a related problem. 
The idea is to approach as much as possible the discrete flow $\Psi_h$ by the map induced by a continuous Lagrangian. Since we start from a variational SOdE, we have a corresponding discrete Lagrangian $L_d^h:Q\times Q \longrightarrow \mathbb{R}$. Our aim is to find, using methods of backward error analysis, a continuous Lagrangian function such that the corresponding exact discrete Lagrangian is close to $L_d^h$ up to any chosen order of accuracy. 
This implies that the flow of the corresponding continuous SODE is also close to the original discrete system up  to the same order of accuracy, see \cite{PatrickCuell}.

Consider a submanifold $M\subset  Q\times Q\times Q$ of dimension $2n$ transverse to the fibers of the  projection 
\[
\begin{array}{rrcl}
pr_{12}: & Q\times Q\times Q&\longrightarrow& Q\times Q\\
&(q_{k-1}, q_k, q_{k+1})&\longmapsto&(q_{k-1},q_k) \, .
\end{array}
\]
The transversality condition implies that $M$ can be represented in a neighborhood $U$ of each point $(q_{k-1}, q_k, q_{k+1})\in M$ as a section $\Gamma_d: U\subset Q\times Q\longrightarrow M\subset  Q\times Q\times Q$.

Assume now that we have a one-parameter family of transverse submanifolds $M_h$, smoothly depending on a small parameter $h\in (0, t)$, $t>0$.
Under these conditions, we can construct a smooth family of sections
\[
\Gamma^h_d: U_h\subset Q\times Q\longrightarrow M_h \, ,
\]
where $U_h$ is an open subset of $Q\times Q$, such that $\Gamma^h_d(q_{k-1}, q_{k})=(q_{k-1}, q_k, \tilde{\Gamma}^h_d(q_{k-1}, q_k))$. Denote by
\[
\begin{array}{rrcl}
\epsilon_d:& Q\times Q\times Q & \longrightarrow & Q\times Q\times Q\times Q\\
           &(q_{k-1}, q_k, q_{k+1})&\longmapsto&(q_{k-1}, q_k, q_k, q_{k+1})
\end{array}
\]
the canonical inclusion.

Assume that it is possible to find a family of local diffeomorphisms $F_d^h: U_h\rightarrow T^*Q$ with $\alpha_Q=\pi_Q \circ F_d^h$, smoothly depending on $h$, and such that
\[
\Sigma_h:=(F_d^h\times F_d^h)(\epsilon_d(M_h))
\]
is a family of Lagrangian submanifolds of  $(T^{*}Q \times T^*Q, \Omega_Q=\beta_{T^*Q}^*\omega_Q-\alpha_{T^*Q}^*\omega_Q)$.
In particular each $\Gamma^h_d$ will be variational.

Each Lagrangian submanifold $\Sigma_h$ defines a symplectomorphism $\Psi_{F_d^h, M_h}(q_{k-1}, p_{k-1}) = (q_k, p_k)$ on $T^*Q$ 
implicitly given by
\begin{align}
p_{k-1}&=F_d^h(q_{k-1}, q_k) \, , \label{eqo1} \\
p_k&= F_d^h(q_k, \tilde{\Gamma}^h_d(q_{k-1}, q_{k})) \, .  \label{eqo2}
\end{align}
Locally, this means that there exists a Lagrangian function $L_d(h, q_{k-1}, q_{k})=L^h_d(q_{k-1}, q_k)$  such that the previous equations are  written as
\begin{align*}
p_{k-1}&=-D_1L^h_d(q_{k-1}, q_k) \, , \\ %%\label{eqo3}\\
p_k&=D_2L_d^h(q_{k-1}, q_k)  \, . %\label{eqo4}
\end{align*}
Notice that $\hbox{graph}(\Psi_{F_d^h, M_h})=\Sigma_h$. In the sequel, we will make the assumption
\begin{equation*}%\label{ass1}
\lim_{h\rightarrow 0^+}\Psi_{F_d^h, M_h}=\operatorname{Id}_{T^*Q} \, ,
\end{equation*}
which means that $\Psi_{F_d^h, M_h}$ is close to the identity for small $h$.
Therefore it is possible to rewrite Equations (\ref{eqo1}) and (\ref{eqo2}) as
\begin{align}
p_{k-1}&=p_{k}+\frac{\partial S_{(h)}}{\partial q_{k-1}}(q_{k-1}, p_k) \, , \label{pois1}\\
q_{k}&=q_{k-1}+\frac{\partial S_{(h)}}{\partial p_k}(q_{k-1}, p_k) \, ,\label{pois2}
\end{align}
for a type 2  generating function $S_{(h)}$ whose related type 1 generating function is precisely $L_d^h$, see \cite[Section VI.5.1]{hairer}.

Define  $S_{(0)}:=0$ and consider  a formal expansion of $S_{(h)}$  in powers of  $h$  
\[
S_{(h)}(q_{k-1}, p_k)= hS_1(q_{k-1}, p_k)+h^2S_2(q_{k-1}, p_k)+h^3S_3(q_{k-1}, p_k)+\ldots
\]

For any symplectic method there is formally a Hamiltonian system, known as the modified differential equation, such that the corresponding flow, evaluated at the time step, is precisely the given method. The Hamiltonian function appears as a formal series and for a rigorous treatment, this series has to be truncated.

\begin{thm}[\cite{BenettinGiorgilli,hairer}] \label{thm:hairer}
Assume that  the symplectic method $\Psi_{F_d^h, M_h}: T^*Q\rightarrow T^*Q$ has a generating function 
\[
S_{(h)}(q_{k-1}, p_k)= hS_1(q_{k-1}, p_k)+h^2S_2(q_{k-1}, p_k)+h^3S_3(q_{k-1}, p_k)+\ldots
\]
with smooth $S_j(q_{k-1},p_k)$, defined on an open set $\bar{U}$. Then, the modified differential equation is a Hamiltonian system with 
\begin{equation} \label{Hamiltonian:series}
H_h(q,p)=H(q, p)+hH_2(q, p)+h^2H_3(q, p)+\ldots
\end{equation}
where the functions $H_j(q_{k-1},p_k)$ are defined and smooth on $\bar{U}$.
\end{thm}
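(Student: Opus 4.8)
The plan is to realize $H_h$ through the Hamilton--Jacobi correspondence between symplectic maps close to the identity and their generating functions. The hypothesis that $\Psi_{F_d^h, M_h}$ is given by $S_{(h)}$ through \eqref{pois1}--\eqref{pois2} is exactly the data needed: I would look for $H_h$ whose time-$h$ Hamiltonian flow $\varphi^{H_h}_h$ has $S_{(h)}$ as its generating function, since matching generating functions (rather than the maps themselves) automatically forces the modified equation to be Hamiltonian. To put the flow time and the Hamiltonian on the same footing I would rescale time, using $\varphi^{H_h}_h = \varphi^{\,hH_h}_1$, and consider the flow $\varphi^{\,hH_h}_\sigma$ over $\sigma\in[0,1]$. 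Writing $W(q,p,\sigma)$ for its type-2 generating function in the near-identity normalization $W(q,p,0)=0$, the Hamilton--Jacobi equation reads
\begin{equation*}
\frac{\partial W}{\partial \sigma}(q,p,\sigma)=h\,H_h\!\left(q+\frac{\partial W}{\partial p}(q,p,\sigma),\,p\right),
\end{equation*}
and the requirement $\varphi^{H_h}_h=\Psi_{F_d^h, M_h}$ becomes the boundary condition $W(q,p,1)=S_{(h)}(q,p)$.

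Next I would insert the formal expansions $W=\sum_{j\geq 1}h^j W_j(q,p,\sigma)$ and $H_h=H+hH_2+h^2H_3+\dots$ into the Hamilton--Jacobi equation, Taylor expand the right-hand side in the shift $\partial_p W=O(h)$, and match equal powers of $h$. At order $h$ one gets $\partial_\sigma W_1=H(q,p)$, so $W_1=\sigma H$, and the boundary condition at $\sigma=1$ yields the leading identification $H=S_1$. At order $h^j$ the equation takes the form $\partial_\sigma W_j = H_j + R_j$, where $R_j$ is a universal polynomial in the partial derivatives of $H,H_2,\dots,H_{j-1}$ and of $W_1,\dots,W_{j-1}$, all of which are already determined. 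Integrating in $\sigma$ from $0$ and evaluating at $\sigma=1$ contributes a single clean term $H_j$ (from $\int_0^1 H_j\,d\sigma$) together with known quantities; imposing $W_j(q,p,1)=S_j(q,p)$ therefore solves uniquely for
\begin{equation*}
H_j = S_j - \int_0^1 R_j(q,p,\sigma)\,d\sigma ,
\end{equation*}
for instance $H_2=S_2-\tfrac12\,\partial_q H\cdot\partial_p H$. Since each $R_j$ is built by differentiation and multiplication from the smooth functions $S_1,\dots,S_j$, every $H_j$ is smooth on $\bar U$, and the recursion closes, producing the asserted series \eqref{Hamiltonian:series}.

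The step I expect to be the main obstacle is the careful bookkeeping needed to verify that the order-$h^j$ equation really does isolate $H_j$ with coefficient one after the $\sigma$-integration, that is, that no derivative of $H_j$ and no $H_{j'}$ with $j'\geq j$ can enter $R_j$; this is what makes the recursion explicit and algebraic rather than an implicit partial differential equation at each stage. The other point worth stressing, though it is structural rather than computational, is that by solving for the \emph{generating function} of the flow we never need to check symplecticity of the modified equation separately: any $H_h$ produced this way automatically has $\varphi^{H_h}_h$ symplectic and matching $\Psi_{F_d^h, M_h}$ to the given order, so the modified differential equation is Hamiltonian by construction. As usual in backward error analysis, the conclusion is a statement about the formal series; convergence is not claimed, only that each coefficient $H_j$ is well defined and smooth on $\bar U$.
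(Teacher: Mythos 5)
The paper does not actually prove Theorem \ref{thm:hairer}: it is imported from \cite{BenettinGiorgilli,hairer}, and the generating-function argument you give is precisely the standard proof in those references and the one the paper implicitly relies on --- the Hamilton--Jacobi equation you write down and the leading relations $S_1=H$, $S_2=\tfrac{1}{2}\frac{\partial H}{\partial q}\cdot\frac{\partial H}{\partial p}$ are reproduced essentially verbatim in the paragraphs following the theorem statement. Your recursion $H_j=S_j-\int_0^1 R_j\,d\sigma$ is correct, and you correctly identify the one point that makes it work, namely that the order-$h^j$ coefficient contains $H_j$ only through the undifferentiated term $H_j(q,p)$ (every occurrence of $H_j$ multiplied by a factor $\partial_p W=\mathcal{O}(h)$ is pushed to higher order), so the scheme is triangular, explicitly solvable, and yields smooth $H_j$ on $\bar{U}$ as claimed.
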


The series (\ref{Hamiltonian:series}) may not converge, but if we consider truncations 
\[
H^{(\rho)}_h(q, p)=H(q, p)+hH_2(q, p)+\ldots+ h^{\rho-1} H_{\rho}(q, p)
\]
of $H_h(q,p)$ then for each $\rho\in \mathbb{N}$ we have 
\[
\Psi_{F_d^h, M_h}-\Phi^{{H^{(\rho)}_h}}_h=\mathcal{O}(h^{\rho+1}) \, ,
\] 
where $\Phi^{H^{(\rho)}_h}_h$ is the flow of the Hamiltonian vector field $X_{H^{(\rho)}_h}$ at time $h$, see \cite[Chapter 10]{serna} and references therein.

Observe that the type 2 generating function $S_{(h)}^{\rho}$ corresponding to the Lagrangian submanifold $\hbox{graph}(\Phi^{H^{(\rho)}_h}_h)$ satisfies 
\begin{equation}\label{gener2}
S_{(h)}^{\rho}-S_{(h)}={\mathcal O}(h^{\rho+1}) \, ,
\end{equation}
since the terms of the Hamiltonian $H_h^{(\rho)}$ and the corresponding type 2 generating function  
series 
\[
S^{\rho}_{(h)}(q_{k-1}, p_k)= hS^{\rho}_1(q_{k-1}, p_k)+h^2S^{\rho}_2(q_{k-1}, p_k)+h^3S^{\rho}_3(q_{k-1}, p_k)+\ldots
\]
are related by derivatives of the Hamiltonian, see \cite{hairer}. For instance we have
\begin{align*}
S^{\rho}_1(q,p)&=H^{(\rho)}_h(q, p) \, ,\\
S^{\rho}_2(q,p)&=\frac{1}{2}\frac{\partial H^{(\rho)}_h}{\partial q}(q, p)\cdot \frac{\partial H^{(\rho)}_h}{\partial p}(q, p) \, ,\\
&\cdots
\end{align*}
which are obtained by expanding the  Hamilton-Jacobi equation corresponding to generating functions of type 2, that is,
\[
\frac{\partial S^{\rho}}{\partial t}(h,q,p)=H^{(\rho)}_h\left(q+\frac{\partial S^{\rho}_{(h)}}{\partial p}(q,p), p\right)\; ,
\qquad  S^{\rho}_{(0)}(q,p)=0 \, ,
\]
where $S^{\rho}(h, q, p)=S^{\rho}_{(h)}(q, p)$. 

From (\ref{gener2}) we deduce that 
\begin{equation} \label{mnb}
p_k\frac{\partial S^{\rho}_{(h)}}{\partial p}(q_{k-1}, p_k)-S^{\rho}_{(h)}(q_{k-1}, p_k)
-\left(p_k\frac{\partial S_{(h)}}{\partial p}(q_{k-1}, p_k)-S_{(h)}(q_{k-1}, p_k)\right) = {\mathcal O}(h^{\rho+1}) \, .
\end{equation}

\begin{prop}
The Hamiltonian function  $H^{(\rho)}_h: T^*Q\rightarrow {\mathbb R}$ is regular, that is, the matrix
\[
\left( \frac{\partial^2 H^{(\rho)}_h}{\partial p_i\partial p_j}\right)
\]
is nondegenerate, for all $\rho\in\mathbb{N}$ and $h$ small enough.
\end{prop}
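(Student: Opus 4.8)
The plan is to reduce the statement to a single nondegeneracy condition at $h=0$ and then propagate it to small $h$ by continuity. Observe that $H^{(\rho)}_h=H+hH_2+\dots+h^{\rho-1}H_\rho$ is polynomial in $h$, so every entry of $\left(\frac{\partial^2 H^{(\rho)}_h}{\partial p_i\partial p_j}\right)$ depends polynomially on $h$ and reduces to $\left(\frac{\partial^2 H}{\partial p_i\partial p_j}\right)$ at $h=0$. Since nondegeneracy of a matrix is an open condition (the determinant is continuous), it suffices to prove that the leading Hamiltonian $H$ is regular, i.e. that $\left(\frac{\partial^2 H}{\partial p_i\partial p_j}\right)$ is nondegenerate; the conclusion for $0<h<h_0(\rho)$ then follows for every $\rho$, with a threshold $h_0(\rho)$ depending on the order of truncation.

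To obtain the regularity of $H$ I would compare two expressions for the derivative $\frac{\partial q_k}{\partial p_{k-1}}$ of the map $\Psi_{F_d^h,M_h}$ at fixed $q_{k-1}$. On one hand, differentiating the discrete Legendre relation $p_{k-1}=-D_1L_d^h(q_{k-1},q_k)$ and using regularity of $L_d^h$ gives the exact identity $\frac{\partial q_k}{\partial p_{k-1}}=-(D_{12}L_d^h)^{-1}$, which is invertible. On the other hand, since $S_{(h)}=hS_1+\mathcal{O}(h^2)$ with $S_1=H$, equations (\ref{pois1}) and (\ref{pois2}) yield $q_k=q_{k-1}+h\frac{\partial H}{\partial p}(q_{k-1},p_k)+\mathcal{O}(h^2)$ and $p_k=p_{k-1}+\mathcal{O}(h)$, whence $\frac{\partial q_k}{\partial p_{k-1}}=h\left(\frac{\partial^2 H}{\partial p_i\partial p_j}\right)+\mathcal{O}(h^2)$. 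Matching the two expressions and letting $h\to 0^+$ identifies $\left(\frac{\partial^2 H}{\partial p_i\partial p_j}\right)=-\lim_{h\to0^+}\frac1h(D_{12}L_d^h)^{-1}$, equivalently $\left(\frac{\partial^2 H}{\partial p_i\partial p_j}\right)^{-1}=-\lim_{h\to0^+}h\,D_{12}L_d^h$.

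The hard part is precisely to show that this limit is nonsingular, i.e. that the leading Hessian in the momenta is invertible, and I expect this to be the main obstacle because it does not follow from the pointwise regularity of each $L_d^h$ alone. Indeed, $\frac{\partial^2 S_{(h)}}{\partial p^2}=h\left(\frac{\partial^2 H}{\partial p^2}\right)+\mathcal{O}(h^2)$ can be invertible for every $h>0$ while its rescaled limit $\frac1h\frac{\partial^2 S_{(h)}}{\partial p^2}\to\left(\frac{\partial^2 H}{\partial p^2}\right)$ degenerates, since $\det\frac{\partial^2 S_{(h)}}{\partial p^2}$ may vanish to order higher than $n=\dim Q$ at $h=0$. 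What I would use to close the argument is the consistency of the family $\{L_d^h\}$ with the underlying continuous variational problem: because the SOdE is variational and approximates a continuous variational SODE, the leading modified system is Lagrangian for a \emph{regular} Lagrangian $L$, so that $\left(\frac{\partial^2 H}{\partial p^2}\right)=\left(\frac{\partial^2 L}{\partial \dot q^i\partial\dot q^j}\right)^{-1}$ is nondegenerate. I would therefore concentrate most of the effort on justifying that $\lim_{h\to0^+}h\,D_{12}L_d^h$ exists and equals minus the inverse velocity-Hessian of a regular continuous Lagrangian, after which the reduction and the matching of derivatives above deliver the claim for all $\rho$ and $h$ small enough.
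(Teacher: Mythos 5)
Your reduction to the nondegeneracy of $\left(\frac{\partial^2 H}{\partial p_i\partial p_j}\right)$ followed by continuity in $h$, and your first-order expansion of the generating-function relations, reproduce the paper's computation: differentiating the exact relation $p_{k-1}=F_d^h(q_{k-1},q_k)$ (equivalently $p_{k-1}=-D_1L_d^h(q_{k-1},q_k)$, so your $-D_{12}L_d^h$ is the paper's $\partial F_d^h/\partial q_k$) against the expansion $q_k=q_{k-1}+h\frac{\partial H}{\partial p}(q_{k-1},p_k)+{\mathcal O}(h^2)$ coming from (\ref{pois1})--(\ref{pois2}) and $S_1=H$ yields, exactly as in the paper,
\[
I=h\,\frac{\partial F_d^h}{\partial q_k}\,\frac{\partial^2 H}{\partial p\,\partial p}+{\mathcal O}(h)\, .
\]
Up to this point your argument and the paper's coincide.

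The gap is in how you close it. You correctly note that exhibiting $\frac{\partial^2 H}{\partial p\partial p}$ as a limit of invertible matrices proves nothing, but you then propose to obtain its invertibility from the assumption that the leading modified system is Lagrangian for a \emph{regular} continuous Lagrangian $L$ with $\frac{\partial^2 H}{\partial p\partial p}=\bigl(\frac{\partial^2 L}{\partial\dot q\partial\dot q}\bigr)^{-1}$. That assumption is not among the hypotheses of Subsection \ref{fromdc}, which only posits the family of local diffeomorphisms $F_d^h$, the Lagrangian submanifolds $\Sigma_h$ and the condition $\lim_{h\to 0^+}\Psi_{F_d^h,M_h}=\operatorname{Id}$; worse, it is circular, because the regularity of $H^{(\rho)}_h$ asserted in this proposition is precisely what is used immediately afterwards to \emph{define} the continuous Lagrangian $L^{(\rho)}_h$ by Legendre transform. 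The paper closes the argument with no external input: if $\frac{\partial^2 H}{\partial p\partial p}$ annihilated a fixed vector $v\neq 0$, applying the displayed identity to $v$ would give $v={\mathcal O}(h)\,v$, a contradiction for $h$ small enough; in other words, a fixed square matrix occurring as a right factor of $I+{\mathcal O}(h)$ cannot be singular, and this forces $\partial F_d^h/\partial q_k={\mathcal O}(1/h)$ rather than the other way around. The identity you already derived finishes the proof when read in product form; your passage from $\frac{\partial^2 H}{\partial p\partial p}=-\lim_{h\to 0^+}\frac1h(D_{12}L_d^h)^{-1}$ to the ``equivalent'' inverted statement is exactly where the invertibility you are trying to establish gets silently assumed.
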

\begin{proof}
From the construction of $H_h^{(\rho)}$ it is possible to check that, in coordinates, 
$S_1(q, p)=H(q,p)$. 
Therefore, from (\ref{pois2}) we have 
\[
q_{k}=q_{k-1}+h\frac{\partial H}{\partial p}(q_{k-1}, p_k)+{\mathcal O}(h^2)
\]
and from Equations (\ref{pois1})  and (\ref{eqo1}) we get
\[
p_{k}+h\frac{\partial H}{\partial q}(q_{k-1}, p_k)=F_d^h(q_{k-1}, q_{k-1}+h\frac{\partial H}{\partial p}(q_{k-1}, p_k))+{\mathcal O}(h^2) \, .
\]
Taking derivatives of the last expression  with respect to $p_k$, we obtain
\[
I=h\frac{\partial F_d^h}{\partial q_k}\frac{\partial^2 H}{\partial p\partial p}+{\mathcal O}(h) \, ,
\]
where $I$ is the identity matrix. From it,  we deduce that 
\[
\frac{\partial F_d^h}{\partial q_k}={\mathcal O}(1/h) \quad \mbox{as} \quad h\rightarrow 0
\]
and the regularity of 
$
\displaystyle\left( \frac{\partial^2 H^{(\rho)}_h}{\partial p_i\partial p_j}\right)
$
follows.
\end{proof}
Since $H^{(\rho)}_h: T^*Q\rightarrow {\mathbb R}$ is regular for $h$ small enough, it is possible to define the regular Lagrangian functions
$L^{(\rho)}_h: TQ\rightarrow {\mathbb R}$ by
\[
L^{(\rho)}_h(v_q)=\langle \alpha_q, v_q\rangle -H^{(\rho)}_h(\alpha_q) \, ,
\]
where $v_q={\mathbb F}H(\alpha_q)$ and ${\mathbb F}H: T^*Q\rightarrow TQ$ is the mapping
defined by
\[
\langle {\mathbb F}H(\alpha_q), \beta_q\rangle=\frac{d}{dt}\Big|_{t=0}H(\alpha_q+t\beta_q) 
\]
for all $\alpha_q, \beta_q\in T^*_qQ$.

The relation between the Lagrangian $L^{(\rho)}_h$  and the  type 2 generating function  
$S^{\rho}_{(h)}$ is given by
\[  
p_k\frac{\partial S^{\rho}_{(h)}}{\partial p}(q_{k-1}, p_k)-S^{\rho}_{(h)}(q_{k-1}, p_k)=
\int^h_0 L^{(\rho)}_h(q_{\rho}(t), \dot{q}_{\rho}(t))\; dt \, ,
\]
where $q_{\rho}(t)$ is the unique solution of the Euler-Lagrange equations for $L^{(\rho)}_h$ satisfying the boundary conditions $q(0) = q_{k-1}$ and $q(h)=q_k$, where 
\begin{align*}
p_{k-1}&=p_{k}+\frac{\partial S^{\rho}_{(h)}}{\partial q}(q_{k-1}, p_k) \, ,\\
q_{k}&=q_{k-1}+\frac{\partial S^{\rho}_{(h)}}{\partial p}(q_{k-1}, p_k) \, .
\end{align*}
Using  Equation (\ref{mnb}) we obtain
\begin{align*}
L_d^h(q_{\rho}(0), q_{\rho}(h))&=
L_d^h\left(q_{\rho}(0), q_{\rho}(0)+\frac{\partial S^{\rho}_{(h)}}{\partial p}(q_{\rho}(0), p_{\rho}(h))\right)\\
&=L_d^h\left(q_{\rho}(0), q_{\rho}(0)+\frac{\partial S_{(h)}}{\partial p}(q_{\rho}(0), p_{\rho}(h))\right)+{\mathcal O}(h^{\rho+1})
\\
&=
p_{\rho}(h)\frac{\partial S_{(h)}}{\partial p}(q_{\rho}(0), p_{\rho}(h))-S_{(h)}(q_{k-1}, p_k)+{\mathcal O}(h^{\rho+1})\\
&\stackrel{\mathclap{(\ref{mnb})}}{=}
p_{\rho}(h)\frac{\partial S^{\rho}_{(h)}}{\partial p}(q_{\rho}(0), p_{\rho}(h))-S_{(h)}^{\rho}(q_{\rho}(0), p_{\rho}(h))+{\mathcal O}(h^{\rho+1})\\
&=\int_0^h L^{(\rho)}_h (q_{\rho}(t), \dot{q}_{\rho}(t))\; dt + {\mathcal O}(h^{\rho+1}) \, .
\end{align*}
Therefore, we have deduced that $L_d^h$ is a discrete Lagrangian of variational order $\rho$ for $L^{(\rho)}_h$ (see \cite{mats} for a related result).

\begin{ex}
Consider the  second order difference equation 
\begin{equation} \label{backward:example}
\frac{1}{h^2}(x_2-2x_1+x_0)+x_1=0, \quad h>0 \, , 
\end{equation}
which defines a submanifold $M_h$ of ${\mathbb R}^3$. $M_h$  corresponds to the smooth family of sections  
\[
\Gamma^h_d (x_{0}, x_{1})=(x_{0}, x_{1}, (2-h^2)x_1-x_0) \, .
\]
We can define a family of local diffeomorphisms $F^h_d: {\mathbb R}\times {\mathbb R}\rightarrow T^*{\mathbb R}$,
\[
F^h_d(x_0, x_1)=\left(x_0, \frac{x_1-x_0}{h}+hx_0+bh\right) \quad \mbox{with} \quad b\in {\mathbb R}\, ,
\]
which produce a family of Lagrangian submanifolds of $(T^*{\mathbb R}\times T^*{\mathbb R}, \Omega_{\mathbb R})$. The corresponding symplectomorphism on $T^*Q$ is given by
\[
\Psi_{F_d^h, M_h}(x_0, p_0)=((1-h^2)x_0+hp_0-bh^2, p_0-hx_0)\, .
\]
Observe that $\lim_{h\rightarrow 0^+} \Psi_{F_d^h, M_h}=\operatorname{Id}_{T^*{\mathbb R}}$.

The type 1 generating function for the Lagrangian submanifold $\hbox{graph}(\Psi_{F_d^h, M_h})$ is
\[
L_d(x_0, x_1)=\frac{h}{2}\left( \frac{x_1-x_0}{h}\right)^2-\frac{1}{2}hx_0^2+bh(x_1-x_0) \, ,
\] 
which has (\ref{backward:example}) as discrete Euler-Lagrange equations.
The corresponding type 2 generating function is
\[
S_{(h)}(x_0, p_1)=\frac{h}{2}(x_0^2+p_1^2)-h^2bp_1+\frac{b^2h^3}{2}\, .
\]
Using Theorem \ref{thm:hairer} we obtain a Hamiltonian function of the form 
\[
H_h(x, p)=\frac{1}{2}(x^2+p^2)-h(bp+\frac{1}{2}xp)+{\mathcal O}(h^2)\, .
\]
If we take the truncation
\[
H^{(2)}_h(x, p)=\frac{1}{2}(x^2+p^2)-h(bp+\frac{1}{2}xp)
\]
we obtain the associated Lagrangian 
\[
 L^{(2)}_h(x, \dot{x})= \frac{1}{8} \left(4 b^2 h^2+\left(-4+h^2\right) x^2+4 h x \dot{x}+4 \dot{x}^2+4 b h (h x+2 \dot{x})\right) \, .
\]
The corresponding exact discrete Lagrangian is
\begin{align*}
L_d^{(2)e}(x_0, x_1)&=\int_0^hL^{(2)}_h(x(t), \dot{x}(t))\; dt\\
&=
\frac{x_0{}^2-2 x_0 x_1+x_1{}^2}{2 h}+\frac{1}{12} \left(-12 b x_0-5 x_0{}^2+12 b x_1-2 x_0 x_1+x_1{}^2\right) h + {\mathcal O}(h^3) \, .
\end{align*}
Observe that 
\[
L_d^{(2)e}(x_0, x_1)-L_d(x_0, x_1)=\frac{1}{12} \left(x_0{}^2-2 x_0x_1+x_1{}^2\right) h+{\mathcal O}(h^3) \, ,
\]
and along the solutions $x(t)$ of the Euler-Lagrange equations for $L^{(2)}_h$ we get
\[
L_d^{(2)e}(x(0), x(h))-L_d(x(0), x(h))={\mathcal O}(h^3) \, .
\]
\end{ex}

\section{Alternative Lagrangian formulations} \label{sec5}

In this section we will first recall how a class of constants of motion arises from alternative Lagrangian formulations of a SODE, with the two alternative Lagrangians being genuinely different in the sense that they should not differ by a constant and/or addition of a total time derivative \cite{Crampin1983,MR1983}. Then we show that the same phenomenon occurs in the discrete setting.

\subsection{Continuous SODEs}
It is well known, see for instance \cite{Crampin1983,MR1983}, that given a vector field $\Gamma$ on a manifold $M$, if we can find a (1,1)-tensor field $A$ on $M$ such that $\mathcal{L}_\Gamma A=0$ then also $\mathcal{L}_\Gamma A^k=0$ and therefore $\operatorname{Tr}(A^k)$ is a constant of motion for $\Gamma$ for all $k$.

It is possible to construct such a (1,1)-tensor field if we have the following ingredients. Assume $(M,\omega)$ is a symplectic manifold, $\Gamma$ is a Hamiltonian vector field on $M$ with respect to $\omega$ and $\tilde{\omega}$ is a two-form on $M$ such that $\mathcal{L}_\Gamma\tilde{\omega}=0$. Then we can define $A$ from the condition 
\begin{equation} \label{def-A}
i_X\tilde{\omega}=i_{A(X)}\omega \quad \mbox{for all} \quad X\in \mathfrak{X}(M) \, ,
\end{equation}
that is, $A(X)=(\sharp_\omega\circ\flat_{\tilde{\omega}})(X)$.
\[
\xymatrix{
& \Omega^1(M) \ar[dr]^{\sharp_\omega} & \\
\mathfrak{X}(M) \ar[rr]_A \ar[ur]^{\flat_{\tilde{\omega}}} & & \mathfrak{X}(M)
}
\]
Then the conditions $\mathcal{L}_\Gamma\omega=\mathcal{L}_\Gamma\tilde{\omega}=0$ imply $\mathcal{L}_\Gamma A=0$. Indeed, taking Lie derivatives with respect to $\Gamma$ on both sides of (\ref{def-A}) we obtain
\[
i_{\left[\Gamma,X\right]}\tilde{\omega}+i_{X}\mathcal{L}_\Gamma\tilde{\omega} = i_{\left[\Gamma,A(X)\right]}\omega+i_{A(X)}\mathcal{L}_\Gamma \omega \quad \mbox{ for all } \quad X \in \mathfrak{X}(M) \, , 
\]
that is, $i_{\left[\Gamma,X\right]}\tilde{\omega}= i_{\left[\Gamma,A(X)\right]}\omega$ and therefore, again from (\ref{def-A}), we get $A\left[\Gamma,X\right]=\left[\Gamma,A(X)\right]$, that is, $\mathcal{L}_\Gamma A=0$.

The above situation arises for instance if we have two alternative Lagrangian formulations for $\Gamma$, with Lagrangian functions $L$ and $\tilde{L}$ (see \cite{MR1983}). Since we do not need to make any assumptions on the rank of the 2-form $\tilde{\omega}$, it is enough to require that one of the Lagrangians, say $L$, is regular. Then the corresponding Poincar\'{e}-Cartan two-forms $\omega_{L}$ and $\omega_{\tilde{L}}$ can be used to construct the (1,1)-tensor field $A=\sharp_{\omega_L}\circ\flat_{\omega_{\tilde{L}}}$, which satisfies $\mathcal{L}_\Gamma A=0$ since $\Gamma$ is Hamiltonian with respect to both $\omega_{L}$ and $\omega_{\tilde{L}}$ and therefore $\mathcal{L}_\Gamma\omega_L=\mathcal{L}_\Gamma\omega_{\tilde{L}}=0$.

\subsection{Discrete SOdEs}
Assume there are two alternative regular discrete Lagrangians $L_d$ and $\tilde{L}_d$ for a discrete second order difference equation $\Gamma$ on $Q\times Q$. Then we get two discrete Lagrangian symplectic forms $\Omega_{L_d}$ and $\Omega_{\tilde{L}_d}$ \cite{marsden-west} (equivalently, if we can find $F$ and $\tilde{F}$ then from Propositon \ref{discrete-crampin} we obtain $\Omega_d$ and $\tilde{\Omega}_d$). We can define a (1,1)-tensor field $A_d$ on $Q\times Q$ as before, from the condition 
\begin{equation} \label{def-Ad}
i_X\tilde{\Omega}_d=i_{A_d(X)}\Omega_d \quad  \mbox{for all} \quad X\in \mathfrak{X}(Q\times Q) \, .
\end{equation}
\[
\xymatrix{
& \Omega^1(Q\times Q) \ar[dr]^{\sharp_{\Omega_d}} & \\
\mathfrak{X}(Q\times Q) \ar[rr]_{A_d} \ar[ur]^{\flat_{\tilde{\Omega}_d}} & & \mathfrak{X}(Q\times Q)
}
\]
Notice again that only the regularity of $L_d$ is actually needed. 
We define the discrete Lie derivative of $A_d$ along $\Gamma$ as 
\[
\mathcal{L}^d_\Gamma A_d=\Phi_\Gamma^{*}\circ A_d - A_d\circ \Phi_\Gamma^{*} \, ,
\]
where $\Phi_\Gamma^{*}=(\Phi_\Gamma^{-1})_{*}:\mathfrak{X}(Q\times Q)\longrightarrow \mathfrak{X}(Q\times Q)$, following the simplified notation used in \cite{foundation}. The conditions $\mathcal{L}^d_\Gamma\Omega_d=\mathcal{L}^d_\Gamma\tilde{\Omega}_d=0$ imply, as in the continuous case, that $\mathcal{L}^d_\Gamma A_d=0$. Indeed, if we take discrete Lie derivatives with respect to $\Gamma$ on both sides of (\ref{def-Ad}), we obtain
\[
i_{\Phi^{*}_\Gamma X}\Phi^{*}_\Gamma\tilde{\Omega}_d=\Phi^{*}_\Gamma(i_{X}\tilde{\Omega}_d)=\Phi^{*}_\Gamma(i_{A_d X}\Omega_d)=i_{\Phi^{*}_\Gamma A_d X}\Phi^{*}_\Gamma\Omega_d \, ,
\]
which using $\mathcal{L}^d_\Gamma\Omega_d=\mathcal{L}^d_\Gamma\tilde{\Omega}_d=0$ becomes $i_{\Phi^{*}_\Gamma X}\tilde{\Omega}_d=i_{\Phi^{*}_\Gamma A_d X}\Omega_d$. Then, by definition of $A_d$, we obtain that $A_d \Phi_\Gamma^{*}X=\Phi_\Gamma^{*}A_d X$. Observe that the condition $A_d\Phi_\Gamma^{*}X=\Phi_\Gamma^{*}A_d X$ is equivalent to $A_d(\Phi_\Gamma)_{*}X=(\Phi_\Gamma)_{*}A_d X$.

Choose a basis $\left\{X_1,\ldots,X_{2n} \right\}$ of $\mathfrak{X}(Q\times Q)$ and write $A_d(X_a)=\mathcal{A}_a^bX_b$, $(\Phi_\Gamma)_{*}(X_a)=\phi_a^bX_b$. Then the above condition takes the form
\begin{align*}
0&=(\Phi_\Gamma)_{*}\circ A_d (X_a)-A_d\circ (\Phi_\Gamma)_{*} (X_a) \\
&=(\Phi_\Gamma)_{*}(\mathcal{A}_a^b(x)X_b(x))-A_d(\phi_a^b(x)X_b(\Phi_\Gamma(x))) \\
&=\mathcal{A}_a^b(x)\phi_b^c(x)X_c(\Phi_\Gamma(x))-\phi_a^b(x)\mathcal{A}_b^c(\Phi_\Gamma(x))X_c(\Phi_\Gamma(x)) \, ,
\end{align*}
from which we get $\mathcal{A}_a^b(x)\phi_b^c(x)=\phi_a^b(x)\mathcal{A}_b^c(\Phi_\Gamma(x))$, that is, $\mathcal{A}_d^c(\Phi_\Gamma(x))=(\phi^{-1})^a_d(x)\mathcal{A}_a^b(x)\phi_b^c(x)$. 
Therefore the eigenvalues of $A_d(x)$ and $A_d(\Phi_\Gamma(x))$ coincide and in particular $\operatorname{Tr}A_d^k(x)=\operatorname{Tr}A_d^k(\Phi_\Gamma(x))$, that is, $\operatorname{Tr}A_d^k$ is a constant of motion for $\Gamma$.

\begin{ex}
Consider the second order differential equation $\ddot{x}+x=0$ on $\mathbb{R}$, that is, the SODE $\Gamma=\dot{x}\frac{\partial}{\partial x}-x\frac{\partial}{\partial \dot{x}}\in \mathfrak{X}(\mathbb{R}^2)$. 
We will find a discretization  of the system, which admits two alternative discrete Lagrangians $L_{d1}$ and $L_{d2}$, and for which $A_d=\sharp_{\Omega_{L_{d1}}}\circ \flat_{\Omega_{L_{d2}}}$ provides constants of motion.
The solutions to the continuous system are given by $x(t)=a\cos(t)+b\sin(t)$, where $a$ and $b$ are constants. Therefore the exponential map associated with $\Gamma$ is given by
\[
\begin{array}{cccl}
\operatorname{exp}^\Gamma_{(x,h)}: & TQ & \longrightarrow & Q\times Q \\
& (x,v) & \longmapsto & (x,x\cos(h)+v\sin(h))
\end{array}
\]
and the flow at time $h$ is
\[
\begin{array}{cccl}
\Phi^\Gamma_h : & TQ & \longrightarrow & TQ \\
& (x,v) & \longmapsto & (x\cos(h)+v\sin(h),v\cos(h)-x\sin(h)) \, .
\end{array}
\]
Notice that for the continuous system we have the 
two alternative Lagrangians
\[
L=\frac{1}{2}(\dot{x}^2-x^2) \quad \mbox{and} \quad \tilde{L}=\frac{1}{3}\dot{x}^4+2x^2\dot{x}^2-x^4 \, ,
\]
with corresponding Legendre transformations $F_1(x,\dot{x})=(x,\dot{x})$ and $F_2(x,\dot{x})=(x,\frac{4}{3}\dot{x}^3+4x^2\dot{x})$.
Therefore $\operatorname{Im}(F_1\times F_1)\circ(\operatorname{Id}\times \Phi_t^{\Gamma})$ and $\operatorname{Im}(F_2\times F_2)\circ(\operatorname{Id}\times \Phi_t^{\Gamma})$ are both Lagrangian submanifolds of $(T^*Q\times T^*Q,\Omega_Q)$.
Then we can define the discrete SOdE
\[
\begin{array}{cccc}
\Gamma_d= (\operatorname{exp}^\Gamma_h \times \operatorname{exp}^\Gamma_h) \circ (\operatorname{Id}\times \Phi^\Gamma_h) \circ R^{e^-}_{h}: & Q\times Q  & \longrightarrow & Q\times Q \times Q\times Q \\
& (x_0,x_1) & \longmapsto  &  (x_0,x_1,x_1,2x_1\cos(h)-x_0)
\end{array}
\]
and the discrete Legendre transformations $F_{d1}=F_1\circ R^{e^-}_{h}$ and $F_{d2}=F_2\circ R^{e^-}_{h}$ which, according to the  diagram in Theorem \ref{thm-cont-disc}, provide Lagrangian submanifolds $\operatorname{Im}(F_{d1} \times F_{d1})\circ\Gamma_d$ and $\operatorname{Im}(F_{d2} \times F_{d2})\circ \Gamma_d$ (and corresponding discrete Lagrangians $L_{d1}$ and $L_{d2}$).

The Lagrangian submanifolds $\operatorname{Im}(F_{d1} \times F_{d1})\circ\Gamma_d$ and $\operatorname{Im}(F_{d2} \times F_{d2})\circ\Gamma_d$ are given respectively by
\[
\left( x_0, \frac{x_1-x_0\cos(h)}{\sin(h)}, x_1, \frac{x_1\cos(h)-x_0}{\sin(h)} \right) \quad \mbox{and}
\]
\[
\left( x_0, \frac{4}{3}\left(\frac{x_1-x_0\cos(h)}{\sin(h)}\right)^3+4x_0^2\left(\frac{x_1-x_0\cos(h)}{\sin(h)}\right), x_1, \frac{4}{3}\left(\frac{x_1\cos(h)-x_0}{\sin(h)}\right)^3+4x_1^2\left(\frac{x_1\cos(h)-x_0}{\sin(h)}\right) \right) \, ,
\]
from where we get
\[
\Omega_{L_{d1}}=\frac{-1}{\sin(h)}dx_0\wedge dx_1 \quad \mbox{and} \quad
\Omega_{L_{d2}}= -4\left( \frac{x_1^2-2x_0x_1\cos(h)+x_0^2}{\sin^3(h)} \right)dx_0 \wedge dx_1 \, .
\]
Therefore we have
\[
A_d=\frac{4}{\sin^2(h)}(x_1^2-2x_0x_1\cos(h)+x_0^2)dx_0\otimes\frac{\partial}{\partial x_0}+\frac{4}{\sin^2(h)}(x_1^2-2x_0x_1\cos(h)+x_0^2)dx_1\otimes\frac{\partial}{\partial x_1}
\]
and we obtain the conserved quantity $x_1^2-2x_0x_1\cos(h)+x_0^2$ for the SOdE  $\Gamma_d$, which is a discretization of the conserved quantity $\dot{x}^2+x^2$ for $\Gamma$.

Although they are not needed in order to get $A_d$, the two discrete Lagrangians that we obtain are
\begin{align*}
L_{d1}(x_0,x_1)&= \frac{\cos(h)}{2\sin(h)}\left(x_0^2+x_1^2\right) -\frac{x_0x_1}{\sin(h)} \, ,\\
L_{d2}(x_0,x_1)&= x_1^4 \cot(h) -\frac{4}{3}x_0 x_1^3 \csc(h) +\frac{1}{3}x_0^4 \cos(2h)\csc(h)\sec(h) + \frac{1}{3}(x_1\cot(h)-x_0\csc(h))^4\tan(h) \\
&= \cot(h)\left( 1+\frac{\cot^2(h)}{3} \right)x_1^4 - \frac{4}{3}x_0 x_1^3 \csc^3(h) + 2 x_0^2 x_1^2 \cot(h)\csc^2(h) - \frac{4}{3} x_0^3 x_1 \csc^3(h) \\
&\mathrel{\phantom{=}}  + \cot(h)\left( 1+\frac{\cot^2(h)}{3} \right)x_0^4 \, .
\end{align*}

\end{ex}

\section{Variationality of discrete constrained systems} \label{sec6}

Now we will consider the case of constrained second order discrete systems. 
Let $M_d \subset Q\times Q$ be a submanifold defined by the discrete constraints $q^\alpha_{k}=\psi^\alpha_{k}(q^i_{k-1},q^a_{k})$, where $a,b=1,\ldots,m<n$, $\alpha,\beta=m+1,\ldots,n$, and let $\Gamma_{d}$ be an explicit second order difference equation on $M_d$, that is, $\Gamma_d$ is a map 
\[
\Gamma_d: M_d  \longrightarrow M_d\times M_d
\]
satisfying $\alpha_{M_d}\circ\Gamma_d=\operatorname{Id}$ and $\operatorname{Im}(i\times i)\circ\Gamma_d\subset \ddot{Q}_d$, where $\ddot{Q}_d$ is the discrete second order submanifold defined in Section \ref{explicit}, $i:M_d\hookrightarrow Q\times Q$ denotes the inclusion and $\alpha_{M_d}:M_d\times M_d \longrightarrow M_d$ denotes the projection onto the first factor. Locally $\Gamma_d$ is given by 
\[
\Gamma_d(q^i_{k-1},q^a_{k}) = (q^i_{k-1},q^a_{k},q^a_{k},\psi^\alpha_{k},q^a_{k+1}=\Gamma^a (q^j_{k-1},q^b_{k})) \, .
\]
We will also use the notation $q_{\hat{k}}=(q_k^a)$ for $a,b=1,\ldots,m<n$ and $q_{\bar{k}}=(q_k^\alpha)$ for $\alpha,\beta=m+1,\ldots,n$. 

Given an immersion $F:M_d\longrightarrow T^{*}Q$ we define $\gamma_{F,\Gamma}:=(F\times F)\circ \Gamma_d$, as shown in the following commutative diagram:
\[
\xymatrix{
M_d \times M_d \ar[rr]^{F\times F} & & T^{*}Q \times T^{*}Q \ar[d]^{\alpha_{T^*Q}} \\
M_d \ar[u]^{\Gamma_d} \ar[rr]^{F} \ar[urr]^{\gamma_{F,\Gamma_d}} \ar[dr]_{\alpha_{Q_{|M_{d}}}} & & T^{*}Q \ar[dl]^{\pi_{Q}} \\
& Q &
}
\]

\begin{definition} \label{variational-immersion}
A SOdE $\Gamma_{d}$ on  $M_d$ is variational if there exists an immersion $F:M_d \longrightarrow T^{*}Q$ such that $\operatorname{Im}(\gamma_{F,\Gamma_d})$ is an isotropic submanifold of $(T^{*}Q\times T^{*}Q,\Omega_Q)$.
\end{definition}

The above diagram in local coordinates becomes
\[
\xymatrix{
(q_{k-1}^i,q_k^a,q_k^a,\psi_k^\alpha,\Gamma^a(q_{k-1}^j,q_k^b)) \ar[rr]^/-20pt/{F\times F} & & (q_{k-1}^i,F_i(q_{k-1}^j,q_k^b),q_k^a,\psi_k^\alpha,F_i(q_k^b,\psi_k^\beta,\Gamma^b(q_{k-1}^j,q_k^b))) \ar[d]^{\alpha_{T^*Q}} \\
(q_{k-1}^i,q_k^a) \ar[u]^{\Gamma_{d}} \ar[rr]^{F} \ar[urr]^{\gamma_{F,\Gamma_d}}  & & (q_{k-1}^i,F_i(q_{k-1}^j,q_k^b)) 
}
\]
Then the condition
\[
d \left( F_a(q_k^b,\psi_k^\beta,\Gamma^b(q_{k-1},q_{\hat{k}}))dq_k^a+F_\alpha(q_k^b,\psi_k^\beta,\Gamma^b(q_{k-1},q_{\hat{k}}))d\psi_k^\alpha -F_i(q_{k-1}^j,q_k^b)dq_{k-1}^i \right)=0
\]
gives the discrete Helmholtz conditions for constrained systems.

First we will provide an extension of Theorem \ref{discrete-crampin} to the discrete setting with constraints. We will need the following proposition from \cite{GuiStern}.

\begin{prop}[\cite{GuiStern}]\label{Prop:TildeS}
	Let $f:M \longrightarrow N$ be an immersion. For each Lagrangian submanifold $S\subset T^{*}M$ we can define a Lagrangian submanifold $\tilde{S}\subset T^{*}N$ by 
	\[
	\tilde{S}=\left\{ \mu\in T^{*}N: f^{*}\mu\in S \right\}.
	\]
\end{prop}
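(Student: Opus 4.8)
The plan is to realize $\tilde S$ as the image of a submanifold of the \emph{restricted cotangent bundle} over $f(M)$ under two canonical maps, and then to transport the isotropy of $S$ through a compatibility of their tautological one-forms. Since $f$ is an immersion it is locally an embedding, and all the assertions (smoothness, dimension, and the Lagrangian condition) are local in $N$; so I would fix a point and work on a neighborhood where $f$ embeds $M$ onto its image. Write $W=(T^{*}N)|_{f(M)}$ for the restriction of $T^{*}N$ to the submanifold $f(M)$, with inclusion $j\colon W\hookrightarrow T^{*}N$. For $\mu\in W$ with base point $f(m)$ the pullback $f^{*}\mu=(df_m)^{*}\mu\in T^{*}_mM$ is well defined, giving a map $\pi\colon W\longrightarrow T^{*}M$, $\pi(\mu)=f^{*}\mu$. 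By definition $\tilde S=j\bigl(\pi^{-1}(S)\bigr)$.

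First I would settle the manifold structure and the dimension count. Since $df_m$ is injective, $(df_m)^{*}$ is surjective, so $\pi$ is a surjective submersion whose fibres are the annihilators of $\operatorname{Im}(df_m)$, of dimension $n-k$ with $k=\dim M$, $n=\dim N$. Hence $\pi^{-1}(S)$ is a submanifold of $W$ of dimension $\dim S+(n-k)=k+(n-k)=n$, and $j$ restricts to an embedding of it. Thus $\tilde S$ is a smooth submanifold of $T^{*}N$ of dimension $n=\tfrac12\dim T^{*}N$, which is exactly the dimension needed for it to be Lagrangian once isotropy is established.

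The key step is the identity $j^{*}\theta_{N}=\pi^{*}\theta_{M}$ between the tautological one-forms. It follows from the commutativity $\pi_{N}\circ j=f\circ\pi_{M}\circ\pi$ on $W$ (both send $\mu\in T^{*}_{f(m)}N$ to $f(m)\in N$), whose tangent map reads $T(\pi_N\circ j)=df\circ T(\pi_M\circ\pi)$; feeding this into the defining formula $(\theta)_\mu(X)=\langle\mu,T\pi_N(X)\rangle$ and using $\langle\mu,df_m(\cdot)\rangle=\langle f^{*}\mu,\cdot\rangle$ gives the claim. Differentiating and recalling $\omega=-d\theta$ yields $j^{*}\omega_{N}=\pi^{*}\omega_{M}$. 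Restricting to $\pi^{-1}(S)$ and using that $S$ is isotropic gives $(j|_{\pi^{-1}(S)})^{*}\omega_N=(\pi|_{\pi^{-1}(S)})^{*}(\omega_M|_S)=0$, so $\tilde S$ is isotropic; with the dimension count it is Lagrangian. In coordinates adapted to the immersion, $f(x)=(x,0)$, this is transparent: $\tilde S=\{(x,0,\xi,\eta):(x,\xi)\in S,\ \eta\text{ free}\}$, and $\omega_N=d\xi\wedge dx+d\eta\wedge dy$ restricts to $d\xi\wedge dx|_S=0$ since $dy=0$ there.

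The main obstacle I anticipate is not the symplectic computation but the smoothness and well-definedness of $\tilde S$: one must check that $\pi$ is a genuine submersion onto $T^{*}M$ and that $f^{*}\mu$ is unambiguous, which is precisely why reducing to a local embedding at the outset is essential. For a non-injective immersion the global set $\{\mu:f^{*}\mu\in S\}$ could fail to be an embedded submanifold, so the statement should be understood locally, each local piece being Lagrangian by the argument above.
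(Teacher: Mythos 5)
The paper does not prove this proposition; it is quoted directly from the reference \cite{GuiStern}, so there is no in-paper argument to compare against. Your proof is correct and is essentially the standard Guillemin--Sternberg argument: identifying $\tilde S$ with $\pi^{-1}(S)$ inside the restricted bundle $W=(T^{*}N)|_{f(M)}$, checking the dimension via the fibres of the fibrewise-surjective morphism $\pi$ (cosets of the annihilator of $\operatorname{Im}(df_m)$, of dimension $n-k$), and transporting isotropy through the identity $j^{*}\theta_{N}=\pi^{*}\theta_{M}$, which you verify correctly from $\pi_{N}\circ j=f\circ\pi_{M}\circ\pi$. Your closing caveat is also well taken: for a non-injective immersion the set $\{\mu: f^{*}\mu\in S\}$ need not be embedded, so the conclusion is that $\tilde S$ is an immersed (locally embedded) Lagrangian submanifold, which is the sense in which the cited result is used in the paper, where $f$ is the inclusion of the constraint submanifold $M_d\hookrightarrow Q\times Q$ and the issue does not arise.
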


Denote the flow of an explicit constrained second order difference equation $\Gamma_d:M_d\longrightarrow M_d\times M_d$ by $\Phi_{\Gamma_d}:M_d\longrightarrow M_d$, such that $\Phi_{\Gamma_d}(q_{k-1},q_{\hat{k}}) = (q_{\hat{k}},\psi_{\bar{k}},\Gamma_{\widehat{k+1}})$.

\begin{prop} 
	An explicit constrained second order difference equation $\Gamma_d:M_d\longrightarrow M_d\times M_d$ is variational if and only if there is a nondegenerate two-form $\Omega_d$ on $M_d$ such that
	\begin{enumerate}
		\item $\mathcal{L}^{d}_{\Gamma_d}\Omega_d=0 \, ,$
		\item $\Omega_d(V_1,V_2)=0$ for all $V_1,V_2\in \operatorname{Ker}(T\alpha_{Q_{|M_{d}}}) \, ,$
		\item $d\Omega_d=0 \, ,$
		\item $\left.\flat_{\Omega_d}\right|_{\operatorname{Ker}(T\alpha_{Q_{|M_{d}}})}$ is injective,
	\end{enumerate}
	where $\mathcal{L}^{d}_{\Gamma_d}\Omega_d:=(\Phi_{\Gamma_d})^{*}\Omega_d-\Omega_d$ is regarded as a discrete analogue of the Lie derivative.
\end{prop}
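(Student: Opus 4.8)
The plan is to reproduce the proof of Proposition~\ref{discrete-crampin} almost verbatim, but with the constraint submanifold $M_d$ playing the role of $Q\times Q$ and with the immersion $F:M_d\to T^{*}Q$ of Definition~\ref{variational-immersion} playing the role of the local diffeomorphism there; the new condition \textit{(iv)} will take over the part that nondegeneracy played in the unconstrained case, namely forcing $F$ to be an immersion. For the direct implication, assuming $\Gamma_d$ variational with immersion $F$ satisfying $\pi_Q\circ F=\alpha_{Q_{|M_{d}}}$ and $\operatorname{Im}(\gamma_{F,\Gamma_d})$ isotropic, I would set $\Omega_d:=F^{*}\omega_Q$ and check the four conditions. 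Condition \textit{(iii)} is immediate, $d\Omega_d=F^{*}d\omega_Q=0$. Condition \textit{(ii)} holds because $\pi_Q\circ F=\alpha_{Q_{|M_{d}}}$ carries $\operatorname{Ker}(T\alpha_{Q_{|M_{d}}})$ into the $\pi_Q$-vertical subbundle of $T^{*}Q$, on which $\omega_Q$ vanishes. Condition \textit{(i)} is the heart of the equivalence and follows from the identity $\gamma_{F,\Gamma_d}^{*}\Omega_Q=(F\circ\Phi_{\Gamma_d})^{*}\omega_Q-F^{*}\omega_Q=\Phi_{\Gamma_d}^{*}\Omega_d-\Omega_d=\mathcal{L}^{d}_{\Gamma_d}\Omega_d$, so that isotropy of $\operatorname{Im}(\gamma_{F,\Gamma_d})$ is exactly $\mathcal{L}^{d}_{\Gamma_d}\Omega_d=0$. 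Finally \textit{(iv)} follows from $F$ being an immersion: for $0\neq V\in\operatorname{Ker}(T\alpha_{Q_{|M_{d}}})$ the vector $TF(V)$ is a nonzero $\pi_Q$-vertical vector, whence $i_V\Omega_d=F^{*}(i_{TF(V)}\omega_Q)\neq 0$ because $\pi_Q\circ F$ is a submersion.

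\textbf{Converse.} Given a two-form $\Omega_d$ satisfying \textit{(i)--(iv)}, I would recover $F$ exactly as in the converse of Proposition~\ref{discrete-crampin}, working in adapted coordinates $(q_{k-1}^{i},q_k^{a})$ on $M_d$. By \textit{(iii)} write locally $\Omega_d=d\Theta_d$; condition \textit{(ii)} forces the vertical part of $\Theta_d$ to be a fibre differential, so that $\Theta_d=\alpha_i\,dq_{k-1}^{i}+\frac{\partial h}{\partial q_k^{a}}\,dq_k^{a}$ for a local function $h$, and $\bar{\Theta}_d:=\Theta_d-dh$ annihilates $\operatorname{Ker}(T\alpha_{Q_{|M_{d}}})$ while still satisfying $d\bar{\Theta}_d=\Omega_d$. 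This $\bar{\Theta}_d$ determines $F:M_d\to T^{*}Q$ by $\langle F,v\rangle=\langle\bar{\Theta}_d,V_v\rangle$, with $\pi_Q\circ F=\alpha_{Q_{|M_{d}}}$ and $\Omega_d=\pm F^{*}\omega_Q$; condition \textit{(iv)}, read as injectivity of the matrix $(\partial F_i/\partial q_k^{a})$, is precisely what makes $F$ an immersion rather than merely a map over the identity. Then \textit{(i)} gives $d(\mathcal{L}^{d}_{\Gamma_d}\bar{\Theta}_d)=\mathcal{L}^{d}_{\Gamma_d}\Omega_d=0$, so locally $\mathcal{L}^{d}_{\Gamma_d}\bar{\Theta}_d=dL_d$ for some $L_d:M_d\to\mathbb{R}$ and $\operatorname{Im}(dL_d)$ is a Lagrangian submanifold of $T^{*}M_d$. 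Applying Proposition~\ref{Prop:TildeS} to the inclusion $i:M_d\hookrightarrow Q\times Q$ lifts this to a Lagrangian submanifold of $T^{*}(Q\times Q)\cong(T^{*}Q\times T^{*}Q,\Omega_Q)$ (via the symplectomorphism $\Psi$ of Remark~\ref{remark-symplecto}) which contains $\Psi(\operatorname{Im}(\gamma_{F,\Gamma_d}))$; since a subset of a Lagrangian submanifold is isotropic, $\operatorname{Im}(\gamma_{F,\Gamma_d})$ is isotropic and $\Gamma_d$ is variational. As a shortcut, isotropy can also be read off directly from $\gamma_{F,\Gamma_d}^{*}\Omega_Q=\mathcal{L}^{d}_{\Gamma_d}\Omega_d=0$.

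\textbf{Main obstacle.} The delicate point is the passage from the full-rank unconstrained picture to the constrained one: here $\dim M_d=n+m<2n$, the image $\operatorname{Im}(\gamma_{F,\Gamma_d})$ is only isotropic and not Lagrangian, and $F$ is only an immersion. The work is therefore to verify that \textit{(iv)} is exactly the rank condition that recovers the immersion $F$ (and not something stronger), to reconcile it with the nondegeneracy/maximal-rank requirement on $\Omega_d$ in the forward direction, and to check that the Guillemin--Sternberg construction of Proposition~\ref{Prop:TildeS} along $i$ indeed realizes $\Psi(\operatorname{Im}(\gamma_{F,\Gamma_d}))$ inside the lifted Lagrangian submanifold, i.e.\ that $i^{*}(\Psi^{-1}\circ\gamma_{F,\Gamma_d})=dL_d$. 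By contrast, the remaining verifications --- the local normal form of $\Theta_d$ forced by \textit{(ii)} and the discrete Lie-derivative computation for \textit{(i)} --- should be routine adaptations of the proof of Proposition~\ref{discrete-crampin}.
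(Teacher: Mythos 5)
Your proposal is correct and follows essentially the same route as the paper: $\Omega_d:=F^{*}\omega_Q$ (the paper's $d(F^{*}\theta_Q)$ up to sign) in the forward direction, with condition \textit{(iv)} encoding the immersion property, and in the converse the same normal form $\bar\Theta_d=\Theta_d-dh$, the same definition of $F$ by pairing, and the same lift via Proposition~\ref{Prop:TildeS}. Your identity $\gamma_{F,\Gamma_d}^{*}\Omega_Q=\mathcal{L}^{d}_{\Gamma_d}\Omega_d$ is a slightly cleaner packaging of the paper's local computation of $\mathcal{L}^{d}_{\Gamma_d}F^{*}\theta_Q$, and the ``shortcut'' it affords for the converse is valid, but these are cosmetic differences rather than a different argument.
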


\begin{proof}
	The proof goes along the same lines as the analogue in \cite{BFM}.
	
	If we assume that $\Gamma_d$ is variational, we can define $\Omega_d=d(F^{*}\theta_Q)$ which clearly satisfies condition (iii). From the local expression 
	\[
	\Omega_d=\frac{\partial F_i}{\partial q_{k-1}^j}dq^j_{k-1}\wedge dq_{k-1}^i+\frac{\partial F_i}{\partial q_k^b}dq_k^b\wedge dq_{k-1}^i
	\]
	condition (ii) is also clear since $\operatorname{Ker}(T\alpha_{Q_{|M_{d}}})=\mbox{span}\left\{\frac{\partial}{\partial q_k^b}\right\}$. The requirement of $F$ being  an immersion implies that $\left(\frac{\partial F_i}{\partial q_k^b}\right)$ is of maximal rank. Thus, taking $v_1,v_2\in \operatorname{Ker}(T\alpha_{Q_{|M_{d}}})$, $v_1=v_1^b\frac{\partial}{\partial q_k^b}$, $v_2=v_2^b\frac{\partial}{\partial q_k^b}$ such that $i_{v_{1}}\Omega_d-i_{v_{2}}\Omega_d=(v_1^b-v_2^b)\left(\frac{\partial F_i}{\partial q_k^b}\right)dq_{k-1}^i=0$, we obtain $v_1=v_2$ because of the rank condition.  Therefore condition (iv) is satisfied.
	
Notice that
\[
\mathcal{L}_{\Gamma_d}^d \Omega_d=\Phi_{\Gamma_d}^{*}\Omega_d-\Omega_d=d\Phi_{\Gamma_d}^{*} F^* \theta_Q-dF^* \theta_Q=d(\mathcal{L}_{\Gamma_d}^d F^* \theta_Q) \, .
\] 
In order to check condition (i) we locally compute $\mathcal{L}_{\Gamma_d}^d F^* \theta_Q=(F\circ\Phi_{\Gamma_d})^{*}\theta_Q-F^{*}\theta_Q$  to get
\[
\mathcal{L}^{d}_{\Gamma_d}F^*\theta_Q=
F_{a}(q_{\hat{k}},\psi_{\bar{k}},\Gamma_{\hat{k}})dq_{k}^a+F_\alpha(q_{\hat{k}},\psi_{\bar{k}},\Gamma_{\hat{k}})d\psi_{k}^a-F_i(q_{k-1},q_{\hat{k}})dq_{k-1}^i \, ,
\]
since $(F\circ\Phi_{\Gamma_d})(q_{k-1},q_{\hat{k}})=(q_{\hat{k}},\psi_{\bar{k}},F_i(q_{\hat{k}},\psi_{\bar{k}},\Gamma_{\hat{k}}))$.
Note that the condition $d(\mathcal{L}_{\Gamma_d}^d F^* \theta_Q)=0$ is exactly the same as requiring that $\operatorname{Im}(\gamma_{F,\Gamma_d})$ be isotropic.

	Conversely, let $\Omega_d$ be a two-form on $M_d$ satisfying (i)-(iv). From (iii), locally $\Omega_d=d\Theta$ for a one-form $\Theta$ on $M_d$ and from (ii) $\Theta$ has the local expression
	\[
	\Theta=\alpha_i dq_{k-1}^i+\frac{\partial h}{\partial q_k^b}(q_{k-1},q_{\hat{k}})dq_k^b
	\]
	for a locally defined map $h:M_d\longrightarrow \mathbb{R}$. Define $\bar{\Theta}=\Theta-dh$, which satisfies $\bar{\Theta}(V)=0$ for all $V\in \operatorname{Ker}(T\alpha_{Q_{|M_{d}}})$ and $d\bar{\Theta}=\Omega$. Then  $F:M_d\longrightarrow T^*Q$ is given by
	\[
	\langle F(q_{k-1},q_{\hat{k}}), v_{q_{k-1}} \rangle=\langle \bar{\Theta}(q_{k-1},q_{\hat{k}}), V_{v_{q_{k-1}}} \rangle \mbox{ for all }  v_{q_{k-1}}\in TQ,
	\]
	where $V_{v_{q_{k-1}}}$ in $TM_d$ is any vector satisfying $T\alpha_{Q_{|M_{d}}}(V_{v_{q_{k-1}}})=v_{q_{k-1}}$.
	
	Since the one-form $\mathcal{L}_{\Gamma_d}^d\bar{\Theta}=\mathcal{L}_{\Gamma_d}^d F^* \theta_Q$ is closed, we obtain a Lagrangian submanifold $\operatorname{Im}(\mathcal{L}_{\Gamma_d}^d F^* \theta_Q)$ of $(T^*M_d,\omega_{M_d})$. Using Proposition \ref{Prop:TildeS} (with $N=Q\times Q$) we obtain a Lagrangian submanifold of $(T^*(Q\times Q),\omega_{Q\times Q})$, described by
	\[
	\widetilde{\operatorname{Im}(\mathcal{L}_{\Gamma_d}^d F^* \theta_Q)}=\left\{ \mu\in T^*(Q\times Q) : i_M^*\mu \in \operatorname{Im}(\mathcal{L}_{\Gamma_d}^d F^* \theta_Q) \right\} \, ,
	\]
	where $i_M$ denotes the inclusion. In coordinates,  $\widetilde{\operatorname{Im}(\mathcal{L}_{\Gamma_d}^d F^* \theta_Q)}$ is given by
	\[
	\left( q_{k-1}^i,q_{\hat{k}}, \psi_{\bar{k}}, -F_i+F_\alpha\frac{\partial \psi^\alpha_{k}}{\partial q_{k-1}^i}-p_\alpha \frac{\partial \psi^\alpha_{k}}{\partial q_{k-1}^i}, F_a+F_\alpha\frac{\partial \psi^\alpha_{k}}{\partial q_{k}^a}-p_\alpha\frac{\partial \psi^\alpha_{k}}{\partial q_{k}^a},p_\alpha \right) \, .
	\]
	Since $\operatorname{Im}(\Psi^{-1}\circ\gamma_{F,\Gamma_d})\subset \widetilde{\operatorname{Im}(\mathcal{L}_{\Gamma_d}^d F^* \theta_Q)}$, $\operatorname{Im}(\gamma_{F,\Gamma_d})$ is an isotropic submanifold of $(T^*Q\times T^*Q,\Omega_Q)$. Furthermore, condition (iv) implies that $\left( \frac{\partial F_i}{\partial q_k^b} \right)$ has maximal rank, that is, $F$ is an immersion.
\end{proof}

Some natural questions that immediately arise are the following:
\begin{enumerate}
\item Given a continuous variational SODE $\Gamma$ on a submanifold $M\subset TQ$, find integrators $\Gamma_{d}$ that are also variational in the sense of Definition \ref{variational-immersion}.
\item From the existing integrators for nonholonomic systems \cite{CM2001,GNI,perlmutter06,BZ}, detect the ones that preserve the variational property (see also \cite{BFMestdag}).
\end{enumerate}

One of the integrators mentioned in (ii) is the discrete Lagrange-d'Alembert (DLA) algorithm, derived from the so-called discrete Lagrange-d'Alembert principle \cite{CM2001}. Given a nonholonomic system, that is, a Lagrangian $L:TQ\rightarrow \mathbb{R}$ and a nonintegrable distribution $D\subset TQ$, it is necessary to choose a discrete Lagrangian $L_d$ and a discrete constraint space $D_q\subset Q\times Q$, satisfying $\mbox{diag}(Q\times Q)\subset D_d$ and $\mbox{dim}(D_d)=\mbox{dim}(D)$, and defined by the annihilation of functions $w_d^a:Q\times Q\rightarrow \mathbb{R}$, $a=1\ldots,m$, regarded as discretizations of the constraint one-forms. As explained in \cite{CM2001}, these discretizations should be chosen in a consistent way in order to get `a desired order of accuracy'.

The DLA integrator is then given by
\begin{align}
D_{1}L_{d}(q_{k},q_{k+1})+D_{2}L_{d}(q_{k-1},q_{k})&=\lambda_{a}w^{a}(q_{k}) \, , \label{eqn:EL} \\ 
w^{a}_{d}(q_{k},q_{k+1})&=0 \, , \label{eqn:DLA2}
\end{align}
where $\lambda_{a}$ are Lagrange multipliers, $w^{a}$ are the constraint one-forms, $L_d$ is a discrete Lagrangian and $w^{a}_{d}$ is a discretization of the contraint one-forms.

Next  we will study different choices of constraints and immersions $F:M_d\longrightarrow T^*Q$ for the example of the vertical rolling disk.

\begin{ex}[Vertical rolling disk] \label{disk}
The system represents a vertical disk rolling on a plane without sliding. It is defined on the configuration space $Q=S^1\times S^1\times\mathbb{R}^{2}$, with  coordinates $(\theta,\varphi,x,y)$, where $\theta$ denotes the angle of self-rotation, $\varphi$ the angle between the direction in which the disk moves and the $x$-axis and $(x,y)$ are the coordinates of the contact point. The kinetic Lagrangian is given by $L=\frac{1}{2}(\dot{\theta}^{2}+\dot{\varphi}^{2}+\dot{x}^{2}+\dot{y}^{2})$, where all parameters are set to one, and the nonholonomic constraints of rolling without sliding are  $\dot{x}=\cos(\varphi)\dot{\theta}$, $\dot{y}=\sin(\varphi)\dot{\theta}$, which define a submanifold $M\subset TQ$. Therefore the constraint one-forms are $w^{1}=dx-\cos(\varphi)d\theta$ and $w^{2}=dy-\sin(\varphi)d\theta$.

Recall that the immersion
\[
\begin{array}{lccc}
F_1: & M & \longrightarrow & T^{*}Q \\
& (\theta,\varphi,x,y,\dot{\theta},\dot{\varphi}) & \longmapsto & (\theta,\varphi,x,y,2\dot{\theta},\dot{\varphi},0,0)
\end{array}
\]
provides an isotropic submanifold $\operatorname{Im}(TF_1\circ\Gamma)$ of $TT^*Q$, and implies that $\Gamma$ is variational in the sense of \cite[Definition 5.1]{BFM}. An alternative immersion is given by 
\[
\begin{array}{lccc}
F_2: & M & \longrightarrow & T^{*}Q \\
& (\theta, \varphi, x, y, \dot{\theta},\dot{\varphi}) & \longmapsto & \left(\theta,\varphi,x,y,\frac{\dot{\theta}}{\dot{\varphi}}, \dot{\varphi}-\frac{\dot{\theta}^{2}}{2\dot{\varphi}^{2}}\left( 1+\cos(\varphi)+\sin(\varphi) \right),\frac{\dot{\theta}}{\dot{\varphi}},\frac{\dot{\theta}}{\dot{\varphi}}\right) \, ,
\end{array}
\]
which also provides an isotropic submanifold $\operatorname{Im}(TF_2\circ\Gamma)$ of $TT^*Q$, see \cite[Example 5.8]{BFM}. 

Now in order to derive a DLA integrator we can choose for instance the discretizations
\begin{align*}
L^{\frac{1}{2}}_{d}(q_{k},q_{k+1})&=\frac{1}{2}\left( \left(\frac{\theta_{k+1}-\theta_{k}}{h}\right)^{2}+\left(\frac{\varphi_{k+1}-\varphi_{k}}{h}\right)^{2}+\left(\frac{x_{k+1}-x_{k}}{h}\right)^{2}+\left(\frac{y_{k+1}-y_{k}}{h}\right)^{2} \right) \, , \\
w^{1}_{d}(q_{k},q_{k+1})&=\frac{x_{k+1}-x_{k}}{h}-\frac{\theta_{k+1}-\theta_{k}}{h}\cos\left(\frac{\varphi_{k}+\varphi_{k+1}}{2}\right) \, , \\
w^{2}_{d}(q_{k},q_{k+1})&=\frac{y_{k+1}-y_{k}}{h}-\frac{\theta_{k+1}-\theta_{k}}{h}\sin\left(\frac{\varphi_{k}+\varphi_{k+1}}{2}\right) \, .
\end{align*}
Equations (\ref{eqn:EL}) are then
\begin{align}
-\frac{\theta_{k+1}-\theta_{k}}{h^2}+\frac{\theta_{k}-\theta_{k-1}}{h^2}&=-\lambda_{1}\cos(\varphi_{k})-\lambda_{2}\sin(\varphi_{k})\label{eqn:theta} \, , \\
-\frac{\varphi_{k+1}-\varphi_{k}}{h^2}+\frac{\varphi_{k}-\varphi_{k-1}}{h^2}&=0 \, ,  \nonumber \\
-\frac{x_{k+1}-x_{k}}{h^2}+\frac{x_{k}-x_{k-1}}{h^2}&=\lambda_{1} \, , \nonumber \\ 
-\frac{y_{k+1}-y_{k}}{h^2}+\frac{y_{k}-y_{k-1}}{h^2}&=\lambda_{2} \, , \nonumber
\end{align}
from which we immediately obtain $\varphi_{k+1}=2\varphi_{k}-\varphi_{k-1}$.

The discrete constraints chosen above yield the Lagrange multipliers
\begin{align*}
\lambda_{1}&=-\frac{\theta_{k+1}-\theta_{k}}{h^2}\cos\left(\frac{\varphi_{k}+\varphi_{k+1}}{2}\right)+\frac{\theta_{k}-\theta_{k-1}}{h^2}\cos\left(\frac{\varphi_{k-1}+\varphi_{k}}{2}\right) \, , \\
\lambda_{2}&=-\frac{\theta_{k+1}-\theta_{k}}{h^2}\sin\left(\frac{\varphi_{k}+\varphi_{k+1}}{2}\right)+\frac{\theta_{k}-\theta_{k-1}}{h^2}\sin\left(\frac{\varphi_{k-1}+\varphi_{k}}{2}\right) \, ,
\end{align*}
and the substitution of them into (\ref{eqn:theta}) gives $\theta_{k+1}=2\theta_{k}-\theta_{k-1}$ (as long as $\varphi_k-\varphi_{k-1}\not=2(2n+1)\pi$, $n\in\mathbb{Z}$).

Therefore we have seen that $\Gamma_d$ is given by
\begin{multline*}
\Gamma_d(\theta_{k-1},\varphi_{k-1},x_{k-1},y_{k-1},\theta_{k},\varphi_{k}) = 
\bigg(\theta_{k-1}, \varphi_{k-1}, x_{k-1}, y_{k-1}, \theta_{k}, \varphi_{k}, \theta_{k}, \varphi_{k}, \\
x_{k-1}+\cos\Big(\frac{\varphi_{k-1}+\varphi_{k}}{2}\Big)(\theta_{k}-\theta_{k-1}),  
y_{k-1}+\sin\Big(\frac{\varphi_{k-1}+\varphi_{k}}{2}\Big)(\theta_{k}-\theta_{k-1}), 2\theta_{k}-\theta_{k-1}, 2\varphi_{k}-\varphi_{k-1} \bigg) \, .
\end{multline*}
If we define $F_{d1}:M_d \longrightarrow T^*Q$ in coordinates  by
\[
F_{d1}(\theta_{k-1},\varphi_{k-1},x_{k-1},y_{k-1},\theta_{k},\varphi_{k}) = \left(\theta_{k-1},\varphi_{k-1},x_{k-1},y_{k-1},2\frac{\theta_{k}-\theta_{k-1}}{h},\frac{\varphi_{k}-\varphi_{k-1}}{h},0,0\right) \, , 
\]
which is a discretization of $F_1$ given above, then $\operatorname{Im}((F_{d1}\times F_{d1})\circ\Gamma_d)$=$\operatorname{Im} (\gamma_{F_{d1},\Gamma_d})$ becomes
\begin{multline*}
  \bigg( \theta_{k-1}, \varphi_{k-1}, x_{k-1}, y_{k-1}, 2\frac{\theta_{k}-\theta_{k-1}}{h}, \frac{\varphi_{k}-\varphi_{k-1}}{h}, 0, 0, 
  \theta_{k},\varphi_{k}, \\
  x_{k-1}+\cos\Big(\frac{\varphi_{k-1}+\varphi_{k}}{2}\Big)(\theta_{k}-\theta_{k-1}), y_{k-1}+\sin\Big(\frac{\varphi_{k-1}+\varphi_{k}}{2}\Big)(\theta_{k}-\theta_{k-1}),
2\frac{\theta_{k}-\theta_{k-1}}{h}, \frac{\varphi_{k}-\varphi_{k-1}}{h},0,0
\bigg) \, .
\end{multline*}
Let $i:\operatorname{Im} (\gamma_{F_{d1},\Gamma_d}) \hookrightarrow T^{*}Q\times T^{*}Q$ denote the inclusion. Then $\operatorname{Im} (\gamma_{F_{d1},\Gamma_d})$ is an isotropic submanifold because
\begin{align*}
i^{*}\Omega_{Q}&=2 d\left(\frac{\theta_{k}-\theta_{k-1}}{h}\right)\wedge d\theta_{k}+d\left(\frac{\varphi_{k}-\varphi_{k-1}}{h}\right)\wedge d\varphi_{k}\\
&\mathrel{\phantom{=}}-2d\left(\frac{\theta_{k}-\theta_{k-1}}{h}\right)\wedge d\theta_{k-1}- d\left(\frac{\varphi_{k}-\varphi_{k-1}}{h}\right)\wedge  d\varphi_{k-1}=0 \, .
\end{align*}

For the chosen discrete Lagrangian $L_d^{\frac{1}{2}}$ and $F_{d1}$, but with arbitrary constraints, the isotropy condition is equivalent to
\begin{equation*}
2 d\left(\frac{\theta_{k+1}-\theta_{k-1}}{h}\right)\wedge d\theta_{k}-2 d\left(\frac{\theta_{k}-\theta_{k-1}}{h}\right)\wedge d\theta_{k-1}=0
\end{equation*}
since the choice of constraints does not affect the evolution of $\varphi$, given by $\varphi_{k+1}=2\varphi_{k}-\varphi_{k-1}$. Then we must  necessarily have an evolution of the form $\theta_{k+1}=-\theta_{k-1}+f(\theta_k)$ in order to obtain an isotropic  submanifold.
For instance, if we choose the alternative constraints
\begin{align*}
w^{1}_{d}(q_{k},q_{k+1})&=\frac{x_{k+1}-x_{k}}{h}-\frac{\theta_{k+1}-\theta_{k}}{h}\cos\left(\varphi_{k}\right) \, , \\
w^{2}_{d}(q_{k},q_{k+1})&=\frac{y_{k+1}-y_{k}}{h}-\frac{\theta_{k+1}-\theta_{k}}{h}\sin\left(\varphi_{k}\right) \, ,
\end{align*}
then we get the evolution $\theta_{k+1}=\theta_{k}+\left( \frac{\theta_{k}-\theta_{k-1}}{2} \right)(1+\cos(\varphi_{k}-\varphi_{k-1}))$ and therefore $\operatorname{Im}(F_{d1}\times F_{d1})\circ \Gamma_d$ is not an isotropic submanifold.

On the other hand, if we take the discrete constraints 
\begin{align*}
w^{1}_{d}(q_{k},q_{k+1})&=\frac{x_{k+1}-x_{k}}{h}-\frac{\theta_{k+1}-\theta_{k}}{h}\left( \frac{1}{2}\cos\left((1-\alpha)\varphi_{k}+\alpha\varphi_{k+1} \right) + \frac{1}{2}\cos\left(\alpha\varphi_{k}+(1-\alpha)\varphi_{k+1} \right)  \right) \, ,\\
w^{2}_{d}(q_{k},q_{k+1})&=\frac{y_{k+1}-y_{k}}{h}-\frac{\theta_{k+1}-\theta_{k}}{h} \left( \frac{1}{2}\sin\left((1-\alpha)\varphi_{k}+\alpha\varphi_{k+1} \right) + \frac{1}{2}\sin\left(\alpha\varphi_{k}+(1-\alpha)\varphi_{k+1} \right)  \right)  \, ,
\end{align*}
then we still get the dynamics $\theta_{k+1}=2\theta_{k}-\theta_{k-1}$ for any $\alpha\in\left[ 0,1\right]$, and therefore we obtain an  isotropic submanifold $\operatorname{Im}(F_{d1}\times F_{d1})\circ \Gamma_d$.

Notice that if we take the map
\[
\bar{F}_{d1}(\theta_{k-1},\varphi_{k-1},x_{k-1},y_{k-1},\theta_{k},\varphi_{k}) = \left(\theta_{k-1},\varphi_{k-1},x_{k-1},y_{k-1},2(\theta_{k}-\theta_{k-1}),\varphi_{k}-\varphi_{k-1},0,0\right) \, , 
\]
instead of $F_{d1}$ then $\operatorname{Im}(\bar{F}_{d1}\times \bar{F}_{d1})\circ \Gamma_d$ is still an isotropic submanifold. This choice will appear in the next section.

Finally we consider the midpoint discretization of the constraints and the midpoint  discretization of the alternative $F_2$ given above, that is
\begin{multline*}
  F_{d2}(\theta_{k-1},\varphi_{k-1},x_{k-1},y_{k-1},\theta_k,\varphi_k) = \left( \theta_{k-1},\varphi_{k-1},x_{k-1},y_{k-1}, \frac{\theta_k-\theta_{k-1}}{\varphi_k-\varphi_{k-1}}, \right. \\
\frac{\varphi_k-\varphi_{k-1}}{h}-\frac{(\theta_k-\theta_{k-1})^2}{2(\varphi_k-\varphi_{k-1})^2}\Big(1+\cos\Big(\frac{\varphi_{k-1}+\varphi_k}{2}\Big)+\sin\Big(\frac{\varphi_{k-1}+\varphi_k}{2}\Big) \Big), 
\frac{\theta_k-\theta_{k-1}}{\varphi_k-\varphi_{k-1}},\frac{\theta_k-\theta_{k-1}}{\varphi_k-\varphi_{k-1}} \bigg) \, .
\end{multline*}
Then $\operatorname{Im}(F_{d2}\times F_{d2})\circ \Gamma_d$ becomes
\begin{multline*}
  \bigg( \theta_{k-1},\varphi_{k-1},x_{k-1},y_{k-1},\frac{\theta_k-\theta_{k-1}}{\varphi_k-\varphi_{k-1}},  
\frac{\varphi_k-\varphi_{k-1}}{h} 
\\
-\frac{(\theta_k-\theta_{k-1})^2}{2(\varphi_k-\varphi_{k-1})^2}\Big(1+\cos\Big(\frac{\varphi_{k-1}+\varphi_k}{2}\Big)+\sin\Big(\frac{\varphi_{k-1}+\varphi_k}{2}\Big) \Big), 
\frac{\theta_k-\theta_{k-1}}{\varphi_k-\varphi_{k-1}},\frac{\theta_k-\theta_{k-1}}{\varphi_k-\varphi_{k-1}}, 
\\
\theta_k, \varphi_k, x_{k-1}+h\cos\Big(\frac{\varphi_{k-1}+\varphi_k}{2}\Big)\frac{\theta_k-\theta_{k-1}}{h}, y_{k-1}+h\sin\Big(\frac{\varphi_{k-1}+\varphi_k}{2}\Big)\frac{\theta_k-\theta_{k-1}}{h}, 
\\
\frac{\theta_k - \theta_{k-1}}{\varphi_k-\varphi_{k-1}}, \frac{\varphi_k - \varphi_{k-1}}{h} - \frac{(\theta_k - \theta_{k-1})^2}{2(\varphi_k-\varphi_{k-1})}\Big( 1+\cos\Big( \frac{3\varphi_k - \varphi_{k-1}}{2} \Big) +\sin\Big( \frac{3\varphi_k - \varphi_{k-1}}{2} \Big) \Big), 
\\
\frac{\theta_k - \theta_{k-1}}{\varphi_k-\varphi_{k-1}}, \frac{\theta_k - \theta_{k-1}}{\varphi_k-\varphi_{k-1}} \bigg) \, , 
\end{multline*}
which is not an  isotropic submanifold.

\end{ex}

\subsection{Extension to a Lagrangian submanifold}

As pointed out in Remark \ref{remark-symplecto}, Definition \ref{variational-immersion} can be equivalently given by substituting the statement \emph{``$\operatorname{Im}(\gamma_{F,\Gamma_{d}})$ is an isotropic submanifold of $(T^{*}Q\times T^{*}Q,\Omega_Q)$''} by \emph{``$\operatorname{Im}(\Psi^{-1}\circ\gamma_{F,\Gamma_{d}})$ is an isotropic submanifold of $(T^*(Q\times Q),\omega_{Q\times Q})$''}.

Next we will show how to extend the isotropic submanifold $\operatorname{Im}(\Psi^{-1}\circ\gamma_{F,\Gamma_{d}})$ in order to obtain a Lagrangian one. The following lemma proved in \cite[Lemma 5.4]{BFM} is needed for this purpose: 

\begin{lemma} \label{lemma-isotropa}
Let $P$ be a smooth manifold, $C$ a submanifold of $P$ and $\gamma$ a section of $\left.T^{*}P\right|_{C}\longrightarrow C$, where $\left.T^{*}P\right|_{C}=\left\{\mu\in T^{*}P: \pi_{P}(\mu)\in C \right\}$ and $\pi_{P}:T^{*}P\longrightarrow P$ denotes the projection over $P$. If $\gamma(C)$ is isotropic in $(T^{*}P,\omega_{P})$, then there is a one-form $\tilde{\gamma}$ defined in a neighborhood of $C$ such that
\begin{enumerate}
\item $\left.\tilde{\gamma}\right|_{C}=\gamma$,
\item $d\tilde{\gamma}=0$.
\end{enumerate}		
\end{lemma}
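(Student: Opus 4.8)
The plan is to reduce the assertion to the relative Poincaré lemma. Write $\omega_{P}=-d\theta_{P}$, where $\theta_{P}$ is the tautological one-form on $T^{*}P$, and let $\iota_{C}\colon C\hookrightarrow P$ denote the inclusion. The first step is to translate the isotropy hypothesis into the closedness of a one-form on $C$. Regarding $\gamma$ as a map $C\to T^{*}P$, the fact that it is a section of $\left.T^{*}P\right|_{C}\to C$ means exactly that $\pi_{P}\circ\gamma=\iota_{C}$. Hence, for $c\in C$ and $v\in T_{c}C$, the defining property of $\theta_{P}$ gives $(\gamma^{*}\theta_{P})_{c}(v)=\langle\gamma(c),T\pi_{P}(T\gamma(v))\rangle=\langle\gamma(c),T\iota_{C}(v)\rangle=(\iota_{C}^{*}\gamma)_{c}(v)$, so that $\gamma^{*}\theta_{P}=\iota_{C}^{*}\gamma$ is the tangential restriction of the covector field $\gamma$ to $C$. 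Since $\gamma(C)$ is isotropic we have $\gamma^{*}\omega_{P}=0$, that is $d(\gamma^{*}\theta_{P})=0$, and therefore $\iota_{C}^{*}\gamma$ is a closed one-form on $C$. This is the only place where the hypothesis is used.

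Next I would choose any smooth one-form $\beta$ on a neighborhood $U$ of $C$ with $\left.\beta\right|_{C}=\gamma$; such an extension always exists, for instance by extending the coefficients of $\gamma$ in coordinates adapted to $C$ and patching with a partition of unity. The two-form $\sigma:=d\beta$ is then closed and, by the previous step, satisfies $\iota_{C}^{*}\sigma=d(\iota_{C}^{*}\beta)=d(\iota_{C}^{*}\gamma)=0$.

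The heart of the argument is then the relative Poincaré lemma applied to $\sigma$. Shrinking $U$ to a tubular neighborhood of $C$ with retraction $p\colon U\to C$, let $\phi_{t}$ be the fibrewise scaling homotopy, so that $\phi_{1}=\operatorname{Id}$ and $\phi_{0}=\iota_{C}\circ p$, generated for $t>0$ by the vector field $V_{t}$. The standard homotopy operator $Q\sigma=\int_{0}^{1}\phi_{t}^{*}(i_{V_{t}}\sigma)\,dt$ produces a one-form $\eta:=Q\sigma$ near $C$ with $d\eta=\phi_{1}^{*}\sigma-\phi_{0}^{*}\sigma=\sigma$, because $\phi_{0}^{*}\sigma=p^{*}\iota_{C}^{*}\sigma=0$. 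The essential point is that $\eta$ vanishes as a covector at every point of $C$: since $V_{t}$ vanishes on the zero section, $(i_{V_{t}}\sigma)_{c}=0$ for all $c\in C$, whence $\eta_{c}=0$.

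Finally I would set $\tilde{\gamma}:=\beta-\eta$. Then $d\tilde{\gamma}=d\beta-d\eta=\sigma-\sigma=0$, which is (ii), and $\left.\tilde{\gamma}\right|_{C}=\left.\beta\right|_{C}-\left.\eta\right|_{C}=\gamma-0=\gamma$, which is (i). I expect the main obstacle to be exactly this last vanishing: a generic primitive of the exact form $d\beta$ would only be guaranteed to satisfy $\iota_{C}^{*}\eta=0$, which controls the tangential part of $\tilde{\gamma}$ along $C$ but not its conormal component, so that $\left.\tilde{\gamma}\right|_{C}$ and $\gamma$ could differ by a nonzero conormal covector. Recovering the full covector $\gamma$ forces one to use a primitive that vanishes pointwise on $C$, which is precisely what the tubular-neighborhood homotopy operator supplies.
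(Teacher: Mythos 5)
Your proof is correct. The paper does not reproduce an argument for this lemma (it only cites \cite[Lemma 5.4]{BFM}), but your route --- translating isotropy of $\gamma(C)$ into closedness of $\iota_C^*\gamma$ via $\gamma^*\theta_P=\iota_C^*\gamma$, extending arbitrarily to $\beta$, and then correcting by a primitive of $d\beta$ produced by the tubular-neighborhood homotopy operator, which vanishes \emph{pointwise} (not merely tangentially) along $C$ --- is exactly the standard relative Poincar\'e lemma argument that the cited result rests on, and you correctly isolate the one genuinely delicate point, namely that controlling only $\iota_C^*\tilde\gamma$ would miss the conormal component of $\gamma$.
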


Now if we take $P=Q\times Q$, $C=M_d$, and $\gamma=\Psi^{-1}\circ\gamma_{F,\Gamma_{d}}$, since $(\Psi^{-1}\circ\gamma_{F,\Gamma_{d}})(M_d)$ is isotropic in $(T^*(Q\times Q),\omega_{Q\times Q})$, then there is a one-form $\tilde{\gamma}$ defined in a neighborhood of $M_d$ such that $\left.\tilde{\gamma}\right|_{M_d}=\gamma$ and $d\tilde{\gamma}=0$. Then by the Poincar\'{e} lemma there is a locally defined function $L_d:Q\times Q\rightarrow \mathbb{R}$ such that  $\tilde{\gamma}=dL_d$.

Recall from \cite[Section 2]{BFM}, that in order to obtain a Lagrangian submanifold we need to choose $\dim(P)-\dim(\operatorname{Im}(\Psi^{-1}\circ\gamma_{F,\Gamma_{d}}))$ constraints that define  a submanifold $N\subset T^*(Q\times Q)$ such that $\operatorname{Im}(\Psi^{-1}\circ\gamma_{F,\Gamma_{d}}) \subset N$.
Next we compute the corresponding Hamiltonian vector fields (with respect to $\omega_{Q\times Q}$). If they are independent and not tangent to $\operatorname{Im}(\Psi^{-1}\circ\gamma_{F,\Gamma_{d}})$, we can extend the original manifold along its flows and obtain a Lagrangian submanifold \cite{Vaisman}, which depends on the choice of constraints. This method provides a source of (possibly) alternative Lagrangians. See \cite[Section 5]{BFM} for a derivation of alternative Lagrangians for the rolling disk using this technique in the continuous setting. 
Corresponding to the immersion $F_1$ we can obtain the Lagrangian function 
\[
L_1=\frac{1}{2}\left( \dot{\theta}^2+\dot{\varphi}^2-\dot{x}^2-\dot{y}^2 \right)+\dot{\theta}(\cos(\varphi)\dot{x}+\sin(\varphi)\dot{y}) \, ,
\]
while for $F_2$, and appropriate choice of constraints, we can obtain
\[
L_2=\frac{1}{2}\left(\dot{\varphi}^{2}-\dot{\theta}^{2}-\dot{x}^{2}-\dot{y}^{2}\right)+\frac{\dot{\theta}^{2}}{2\dot{\varphi}}\left( 1-\cos(\varphi)-\sin(\varphi) \right)+\dot{\theta}\dot{x}\left( \cos(\varphi)+\frac{1}{\dot{\varphi}} \right)+\dot{\theta}\dot{y}\left( \sin(\varphi)+\frac{1}{\dot{\varphi}} \right) \, .
\] 
We will now see an example of this process in the discrete case.

\begin{ex} \label{disk-ext}
Consider the vertical rolling disk again. With $\Gamma_d$ and $F_{d1}$ as in Example \ref{disk}, we obtain the isotropic submanifold $\operatorname{Im}(\Psi^{-1}\circ\gamma_{F_{d1},\Gamma_{d}})$ of $(T^*(Q\times Q),\omega_{Q\times Q})$ given by
  \begin{multline*}
    \left( \theta_{k-1}, \varphi_{k-1}, x_{k-1}, y_{k-1}, \theta_{k},\varphi_{k}, x_{k-1}+\cos\Big(\frac{\varphi_{k-1}+\varphi_{k}}{2}\Big)(\theta_{k}-\theta_{k-1}),
\right.
\\
\left.
y_{k-1}+\sin\Big(\frac{\varphi_{k-1}+\varphi_{k}}{2}\Big)(\theta_{k}-\theta_{k-1}), 2\frac{\theta_{k-1}-\theta_{k}}{h}, \frac{\varphi_{k-1}-\varphi_{k}}{h}, 0, 0, 2\frac{\theta_{k}-\theta_{k-1}}{h}, \frac{\varphi_{k}-\varphi_{k-1}}{h},0,0
\right) \, ,
  \end{multline*}
where we denote coordinates on $(T^*(Q\times Q),\omega_{Q\times Q})$ by 
\[(\theta_{k-1}, \varphi_{k-1}, x_{k-1}, y_{k-1}, \theta_{k},\varphi_{k},x_k,y_k,p_{\theta_{k-1}},p_{\varphi_{k-1}},p_{x_{k-1}},p_{y_{k-1}},p_{\theta_{k}},p_{\varphi_{k}},p_{x_k},p_{y_k}).\]
Now we can choose for instance the constraints 
\begin{align*}
\phi_1&= x_k-x_{k-1}-\cos\left(\frac{\varphi_{k-1}+\varphi_{k}}{2}\right)(\theta_k-\theta_{k-1})+p_{x_k} \, , \\
\phi_2&=y_k-y_{k-1}-\sin\left(\frac{\varphi_{k-1}+\varphi_{k}}{2}\right)(\theta_k-\theta_{k-1})+p_{y_k} \, ,
\end{align*}
with corresponding Hamiltonian vector fields
\begin{align*}
X_{\phi_1}&=\frac{\partial}{\partial p_{x_k}}-\frac{\partial}{\partial p_{x_{k-1}}}-\cos\left( \frac{\varphi_{k-1}+\varphi_k}{2} \right)\left( \frac{\partial}{\partial p_{\theta_k}}-\frac{\partial}{\partial p_{\theta_{k-1}}} \right) \\
&\mathrel{\phantom{=}}+\frac{\theta_k-\theta_{k-1}}{2}\sin\left( \frac{\varphi_{k-1}+\varphi_k}{2} \right)\left( \frac{\partial}{\partial p_{\varphi_k}}+\frac{\partial}{\partial p_{\varphi_{k-1}}} \right)-\frac{\partial}{\partial x_k} \, , \\
X_{\phi_2}&=\frac{\partial}{\partial p_{y_k}}-\frac{\partial}{\partial p_{y_{k-1}}}-\sin\left( \frac{\varphi_{k-1}+\varphi_k}{2} \right)\left( \frac{\partial}{\partial p_{\theta_k}}-\frac{\partial}{\partial p_{\theta_{k-1}}} \right) \\
&\mathrel{\phantom{=}}-\frac{\theta_k-\theta_{k-1}}{2}\cos\left( \frac{\varphi_{k-1}+\varphi_k}{2} \right)\left( \frac{\partial}{\partial p_{\varphi_k}}+\frac{\partial}{\partial p_{\varphi_{k-1}}} \right)-\frac{\partial}{\partial y_k} \, .
\end{align*}
If we extend along the flows of $X_{\phi_1}$ and $X_{\phi_2}$ we obtain the Lagrangian submanifold
\begin{multline*}
  \left(
\theta_{k-1}, \varphi_{k-1}, x_{k-1}, y_{k-1}, \theta_{k},\varphi_{k}, x_k, y_k,
\right.
\\
\left( -\frac{2}{h}+1 \right)(\theta_k-\theta_{k-1})-\cos\left( \frac{\varphi_{k-1}+\varphi_k}{2} \right)(x_k-x_{k-1})-\sin\left( \frac{\varphi_{k-1}+\varphi_k}{2} \right)(y_k-y_{k-1}),
\\
-\frac{\varphi_k-\varphi_{k-1}}{h}+\frac{\theta_k-\theta_{k-1}}{2}\left( \cos\left( \frac{\varphi_{k-1}+\varphi_k}{2} \right)(y_k-y_{k-1})-\sin\left( \frac{\varphi_{k-1}+\varphi_k}{2} \right)(x_k-x_{k-1}) \right),
\\
x_k-x_{k-1}-\cos\left( \frac{\varphi_{k-1}+\varphi_k}{2} \right)(\theta_k-\theta_{k-1}), y_k-y_{k-1}-\sin\left( \frac{\varphi_{k-1}+\varphi_k}{2} \right)(\theta_k-\theta_{k-1}),
\\
\left( \frac{2}{h}-1 \right)(\theta_k-\theta_{k-1})+\cos\left( \frac{\varphi_{k-1}+\varphi_k}{2} \right)(x_k-x_{k-1})+\sin\left( \frac{\varphi_{k-1}+\varphi_k}{2} \right)(y_k-y_{k-1}),
\\
\frac{\varphi_k-\varphi_{k-1}}{h}+\frac{\theta_k-\theta_{k-1}}{2}\left( \cos\left( \frac{\varphi_{k-1}+\varphi_k}{2} \right)(y_k-y_{k-1})-\sin\left( \frac{\varphi_{k-1}+\varphi_k}{2} \right)(x_k-x_{k-1}) \right),
\\
\left. -\left( x_k-x_{k-1}-\cos\left( \frac{\varphi_{k-1}+\varphi_k}{2} \right)(\theta_k-\theta_{k-1}) \right), -\left( y_k-y_{k-1}-\sin\left( \frac{\varphi_{k-1}+\varphi_k}{2} \right)(\theta_k-\theta_{k-1}) \right) \right) \, ,
\end{multline*}
with corresponding discrete Lagrangian
\begin{align} \label{Lagrangiano-discreto-raro}
L_d&=-\frac{1}{2}(x_k-x_{k-1})^2-\frac{1}{2}(y_k-y_{k-1})^2+\left(\frac{1}{h}-\frac{1}{2}\right)(\theta_k-\theta_{k-1})^2+\frac{1}{2h}(\varphi_k-\varphi_{k-1})^2  \\
&\mathrel{\phantom{=}}+(\theta_k-\theta_{k-1})\left(\cos\left( \frac{\varphi_{k-1}+\varphi_k}{2} \right)(x_k-x_{k-1})+\sin\left( \frac{\varphi_{k-1}+\varphi_k}{2} \right)(y_k-y_{k-1}) \right) \, . \nonumber
\end{align}
The DEL equations corresponding to $L_d$ are
\begin{align}
x_{k-1}-2x_k+x_{k+1}+(\theta_k-\theta_{k-1})\cos\left( \frac{\varphi_{k-1}+\varphi_k}{2}\right)-(\theta_{k+1}-\theta_k)\cos\left( \frac{\varphi_k+\varphi_{k+1}}{2}\right) &=0 \, ,  \label{d1} \\
y_{k-1}-2y_k+y_{k+1}+(\theta_k-\theta_{k-1})\sin\left( \frac{\varphi_{k-1}+\varphi_k}{2}\right)-(\theta_{k+1}-\theta_k)\sin\left( \frac{\varphi_k+\varphi_{k+1}}{2}\right) &=0 \, , \label{d2}  \\
\left(\frac{-2+h}{h}\right)(\theta_{k-1}-\theta_k)-\left(\frac{-2+h}{h}\right)(\theta_k-\theta_{k+1}) \hspace{15em}& \nonumber \\
+(x_k-x_{k-1})\cos\left( \frac{\varphi_{k-1}+\varphi_k}{2}\right)+(x_k-x_{k+1})\cos\left( \frac{\varphi_k+\varphi_{k+1}}{2}\right) & \nonumber \\
+(y_k-y_{k-1})\sin\left( \frac{\varphi_{k-1}+\varphi_k}{2}\right)+(y_k-y_{k+1})\sin\left( \frac{\varphi_k+\varphi_{k+1}}{2}\right) &=0 \, , \label{d3}  \\
\frac{\varphi_k-\varphi_{k-1}}{h}+\frac{\varphi_k-\varphi_{k+1}}{h} \hspace{25em}& \nonumber \\
+\frac{1}{2}(\theta_{k-1}-\theta_k)\left( (y_{k-1}-y_k)\cos\left( \frac{\varphi_{k-1}+\varphi_k}{2}\right)+(x_k-x_{k-1})\sin\left( \frac{\varphi_{k-1}+\varphi_k}{2}\right) \right) & \nonumber\\
+\frac{1}{2}(\theta_k-\theta_{k+1})\left((y_k-y_{k+1})\cos\left( \frac{\varphi_k+\varphi_{k+1}}{2}\right)+(x_{k+1}-x_k)\sin\left( \frac{\varphi_k+\varphi_{k+1}}{2}\right) \right) &=0  \, . \label{d4} 
\end{align}
When restricted to the constraint submanifold given by
\begin{align*}
x_k&=x_{k-1}+\cos\left(\frac{\varphi_{k-1}+\varphi_{k}}{2}\right)(\theta_k-\theta_{k-1}) \, , \\
y_k&=y_{k-1}+\sin\left(\frac{\varphi_{k-1}+\varphi_{k}}{2}\right)(\theta_k-\theta_{k-1}) \, ,
\end{align*}
Equations (\ref{d1}) and (\ref{d2}) identically vanish and Equations (\ref{d3}) and (\ref{d4}) become $\theta_{k+1}=2\theta_k-\theta_{k-1}$ and $\varphi_{k+1}=2\varphi_k-\varphi_{k-1}$ respectively. Hence we recover the SOdE in Example \ref{disk}.

\end{ex}

\begin{remark} \label{remark-sin-h}
Consider the Lagrangian obtained in \cite{BFM} by extension of an isotropic submanifold corresponding to $F_1:(\theta,\varphi,x,y,\dot{\theta},\dot{\varphi}) \longmapsto (\theta,\varphi,x,y,2\dot{\theta},\dot{\varphi},0,0)$, given by
\[
L_1=\frac{1}{2}\left( \dot{\theta}^2+\dot{\varphi}^2-\dot{x}^2-\dot{y}^2 \right)+\dot{\theta}(\cos(\varphi)\dot{x}+\sin(\varphi)\dot{y}) \, .
\]
If we take the discretization 
\begin{align*}
L_{d1}&=\frac{1}{2}\left( \left(\frac{\theta_k-\theta_{k-1}}{h}\right)^2+\left(\frac{\varphi_k-\varphi_{k-1}}{h}\right)^2-\left(\frac{x_k-x_{k-1}}{h}\right)^2-\left(\frac{y_k-y_{k-1}}{h}\right)^2 \right) \\
&\mathrel{\phantom{=}} +\frac{\theta_k-\theta_{k-1}}{h}\left(\cos\left(\frac{\varphi_k+\varphi_{k-1}}{2}\right)\frac{x_k-x_{k-1}}{h}+\sin\left(\frac{\varphi_k+\varphi_{k-1}}{2}\right)\frac{y_k-y_{k-1}}{h}\right) \, , 
\end{align*}
then the DEL equations are $\theta_{k+1}=2\theta_k-\theta_{k-1}$ and $\varphi_{k+1}=2\varphi_k-\varphi_{k-1}$ when restricted to the constraint submanifold.

If instead of $F_{d1}$ we consider $\bar{F}_{d1}$ in Example \ref{disk}, then by choosing the same constraints $\phi_1$ and $\phi_2$ as in Example \ref{disk-ext}, and extending the isotropic submanifold $\operatorname{Im}(\Psi^{-1}\circ\gamma_{\bar{F}_{d1},\Gamma_d})$, we obtain the discrete Lagrangian
\begin{align*}
\bar{L}_d&=\frac{h^2}{2}\left( \left(\frac{\theta_k-\theta_{k-1}}{h}\right)^2+\left(\frac{\varphi_k-\varphi_{k-1}}{h}\right)^2-\left(\frac{x_k-x_{k-1}}{h}\right)^2-\left(\frac{y_k-y_{k-1}}{h}\right)^2 \right. \\
&\mathrel{\phantom{=}} \left. +\frac{\theta_k-\theta_{k-1}}{h}\left(\cos\left(\frac{\varphi_k+\varphi_{k-1}}{2}\right)\frac{x_k-x_{k-1}}{h}+\sin\left(\frac{\varphi_k+\varphi_{k-1}}{2}\right)\frac{y_k-y_{k-1}}{h}\right) \right) \\
&= h^2 L_{d1} \, . 
\end{align*}

\end{remark}

We have run simulations of the vertical rolling disk using the DLA integrator (\ref{eqn:EL})-(\ref{eqn:DLA2}). We have used several alternative discretizations for defining the discrete constraints $w_d^a$:
\begin{itemize}
  \item Midpoint rule: $w_d^a(q_k,q_{k+1})=w^a\left( \frac{q_k+q_{k+1}}{2}\right)\left( \frac{q_{k+1}-q_k}{h}\right)$;
  \item Trapezoidal rule: 	$w_d^a(q_k,q_{k+1})= \frac{1}{2}\left(w^a\left( q_k\right)\left( \frac{q_{k+1}-q_k}{h}\right)+
w^a\left( q_{k+1}\right)\left( \frac{q_{k+1}-q_k}{h}\right) \right);$
  \item $\alpha$-trapezoidal rule:
	\begin{align*}
	w_d^a(q_k,q_{k+1}) &= \frac{1}{2} \left( w^a\left( (1-\alpha)q_k+\alpha q_{k+1}\right)\left( \frac{q_{k+1}-q_k}{h}\right) 
	\right.   \\
 &\mathrel{\phantom{=}} \left. + w^a\left( \alpha q_k + (1-\alpha)q_{k+1}\right)\left( \frac{q_{k+1}-q_k}{h}\right) \right) \, ,
  \end{align*}
which reduces to the trapezoidal rule for $\alpha=0$ and $\alpha=1$, and to the midpoint rule for $\alpha=1/2$;
  \item Euler A: $w_d^a(q_k,q_{k+1})= w^a\left( q_k\right)\left( \frac{q_{k+1}-q_k}{h}\right)$;
  \item Euler B: $w_d^a(q_k,q_{k+1})= w^a\left( q_{k+1}\right)\left( \frac{q_{k+1}-q_k}{h}\right)$.
\end{itemize}

For the following choices of a Lagrangian function $L$, we have computed numerically the values of the energy $K=({\partial  L}/{\partial \dot q}) \dot q- L$ along the solutions:
\begin{itemize}
\item $L_1=\frac{1}{2}\left(\dot\theta^2+\dot \varphi^2- \dot x^2 - \dot y^2\right)+ \dot \theta(\cos(\varphi) \dot x+\sin(\varphi) \dot y)$ (see Remark \ref{remark-sin-h}), which gives $K_1=L_1$,
\item $L_2=\frac{1}{2}\left(-\dot\theta^2+\dot \varphi^2- \dot x^2 - \dot y^2\right)+ \frac{ \dot \theta^2}{2 \dot \varphi}(1-\cos(\varphi)-\sin(\varphi))+ \dot \theta \dot x \left(\cos(\varphi)+\frac{1}{ \dot \varphi}\right)+ \dot \theta \dot y \left(\sin(\varphi)+\frac{1}{ \dot \varphi}\right)$ (see Example \ref{disk}) which gives $K_2=\frac{1}{2}\left(-\dot\theta^2+\dot \varphi^2- \dot x^2 - \dot y^2\right)+ \dot \theta(\cos(\varphi) \dot x+\sin(\varphi) \dot y)$,
\item $L_3=h^2\left(-\frac{1}{2}\dot x^2-\frac{1}{2}\dot y^2+\left(\frac{1}{h}-\frac{1}{2}\right)\dot \theta^2+\frac{1}{2h}\dot \varphi^2+ \dot \theta(\cos(\varphi)  \dot x+\sin(\varphi)  \dot y)\right)$, whose corresponding midpoint discretization is (\ref{Lagrangiano-discreto-raro}), which gives $K_3=L_3$.
\end{itemize}
The energy functions $K_1$, $K_2$ and $K_3$ were discretized using the midpoint rule to obtain $K^d_1$, $K^d_2$, $K^d_3$. The results of the simulations for all methods, except for Euler A and B, preserved the energy functions, up to numerical truncation errors. For the $\alpha$-trapezoidal discretization, all the values of $\alpha$ that we have used preserve the energy functions. This is expected because we already saw in Example \ref{disk} that for any $\alpha$ we obtain a variational SOdE. 

Note that $K_1$ and $K_2$ only differ in the sign of the term $\frac{1}{2} \dot \theta ^2$, whose discrete version is $\frac{1}{2h^2}(\theta_{k+1}-\theta_k)^2$. For all $\alpha$, one of the discrete evolution equations is $\theta_{k+1}=2\theta_k-\theta_{k-1}$, so $K^d_2-K^d_1$ is constant along solutions.
Similarly, it is easy to show that the preservation of either $K_2^d$ or $K_3^d$ along solutions implies the preservation of the other one. Indeed,
\[
K_2-\frac{K_3}{h^2}=-\frac{\dot\theta^2}{h}+\frac{h-1}{2h}\dot\varphi^2,
\]
and $\theta_k$ and $\varphi_k$ evolve uniformly, that is, both $\theta_{k+1}-\theta_k$ and $\varphi_{k+1}-\varphi_k$ are constant. This implies that $K^d_2-{K^d_3}/{h^2}$ is constant.

The energy behavior of the Euler A and B discretizations is shown in Figure \ref{fig:EulerAB-rolling-disk}.

\begin{figure}[ht]
\begin{center}
%\setlength\fboxsep{0pt}
%\setlength\fboxrule{0.5pt}
%\fbox{
\includegraphics[trim = 35mm 5mm 30mm 0mm, clip, scale=.58]{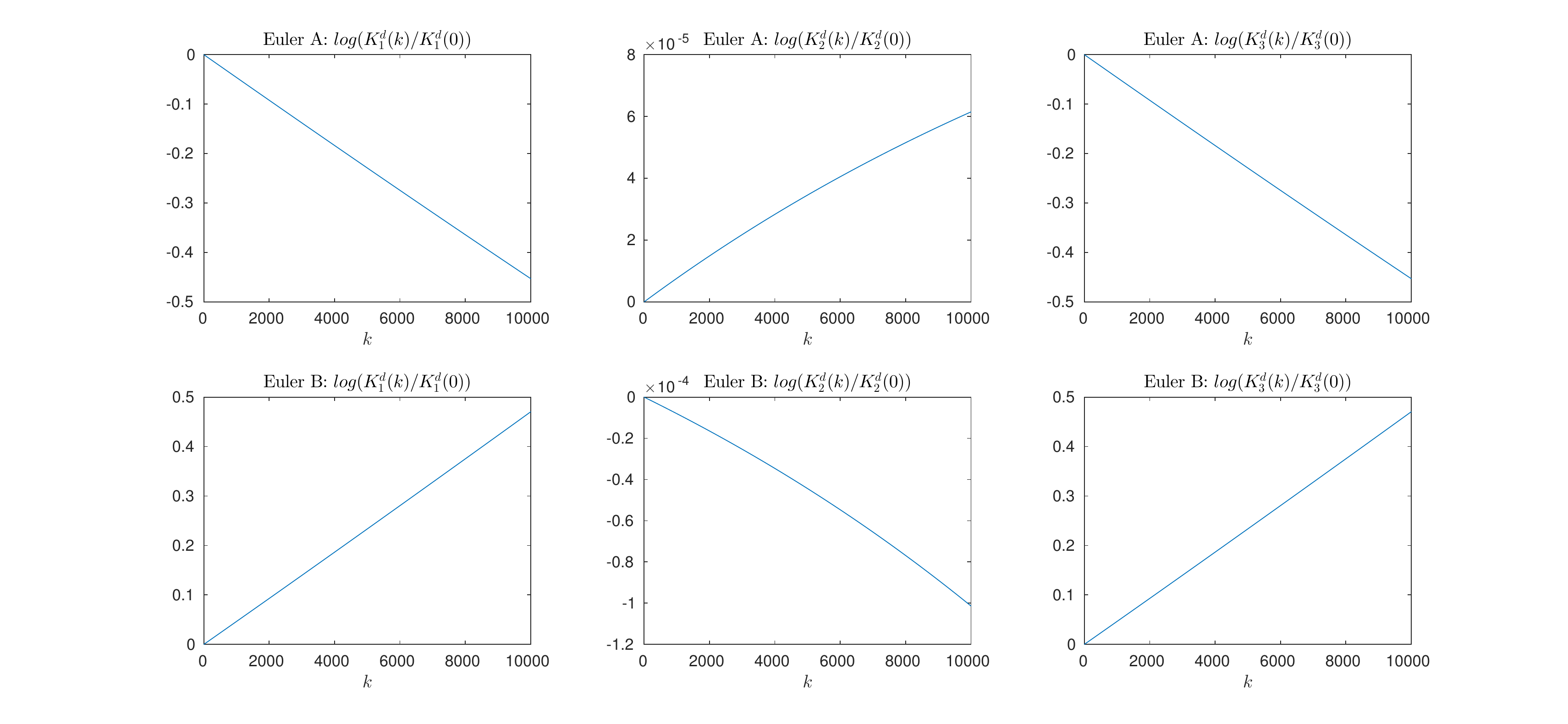}
%}
\end{center}
\caption{Energy behavior for Euler A and B, vertical rolling disk. $T=500$, $h=0.05$, $(x_0,y_0,\theta_0,\varphi_0)=(1,1,0.5,0.3)$, $\theta_1=0.525$, $\varphi_1=0.31$; $x_1$ and $y_1$ satisfying the discrete constraints.}
\label{fig:EulerAB-rolling-disk}
\end{figure}

\section{Conclusions and future work} \label{sec7}
In this paper we have provided a formulation of the inverse problem for discrete second order systems. This formulation corresponds to the multiplier version of the classical inverse problem, since the goal consists in determining the equivalence of solutions of the given system and a system of discrete Euler-Lagrange equations. In this setting we can recover previous results known for the continuous problem, for instance a characterization of the problem given in terms of the existence of a Poincar\'{e}-Cartan two-form, see \cite{marmo1980,81Crampin}, and a characterization given in terms of the  existence of a Legendre transformation, see \cite{BFM}.
We have also analyzed the relationship between the continuous and discrete problems, the derivation of constants of motion from alternative Lagrangian formulations and the constrained case.

In the future we intend to study the following questions, related to the results of the present paper:
\begin{itemize}
\item In \cite{PatrickCuell} the authors provide a complete 
error analysis for variational integrators of regular Lagrangian systems, proving that if  we take as a discrete Lagrangian an approximation of a given order
 of the exact discrete Lagrangian then the  derived Hamiltonian discrete scheme is an approximation of the same order 
to the continuous flow.  We think that using our results in Subsection \ref{fromdc} it would be possible to study the converse of this important result under some regularity conditions.   

\item If we need to implement  an integrator for a given second order differential equation, it would be useful to preserve the geometric invariants of this SODE as much as possible. For instance, if we know that the SODE is variational then there exists an energy that is preserved along the evolution and additionally we have symplecticity. Therefore, in this case, it is useful to use symplectic  or variational integrators  to avoid spurious non-physical effects in the simulations. 
As an example, given a Chaplygin system, the DLA algorithm can be reduced to an algorithm, called RDLA, on the quotient space $Q/G$ by a Lie group, provided that we choose  the discrete Lagrangian $L_d$ and the discrete constraint space $D_d$ to be invariant under the diagonal action of $G$ on $Q\times Q$ \cite{CM2001}. It is in general of the form
\[
D_1L_d^*(r_k,r_{k+1})+D_2L_d^*(r_{k-1},r_k)=F^{-}(r_k,r_{k+1})+F^{+}(r_{k-1},r_k) \, .
\]
Under some extra assumptions we get $F^{-}(r_k,r_{k+1})=F^{+}(r_{k-1},r_k)=0$, and therefore the RDLA algorithm gives a variational integrator on $Q/G$, but this is not generally the case. It would be interesting to know if an alternative Lagrangian can be found so that the RDLA integrator and the HDEL and PTHDEL methods proposed in \cite{FBO2012} are variational.

\item Many mechanical systems are defined not on tangent bundles but on quotients by a symmetry Lie group, and therefore  the equations of motion are not the standard Euler-Lagrange equations. These equations often appear in a reduced version and it is possible to analyze the existence of a possible Lagrangian formulation using the geometrical setting of  Lie algebroids. 
 In \cite{BFM2} we study the inverse problem for SODEs defined on Lie algebroids.  In a future paper, we will study the discrete case using the formalism of Lie groupoids \cite{MMM06Grupoides}.

\end{itemize}

\section*{Acknowledgments}

This work has been partially supported by research grants MTM2013-42870-P, MTM2016-76702-P (MINECO) and the ICMAT Severo Ochoa project SEV-2015-0554 (MINECO). 
MFP has been financially supported by a FPU scholarship from MECD.
SF has been supported by CONICET Argentina (PIP 2013-2015 GI
11220120100532CO), ANPCyT Argentina (PICT 2013-1302) and SGCyT UNS.

\bibliographystyle{plain}
\bibliography{References}

\end{document}